\definecolor{vio}{rgb}{0.54, 0.17, 0.89}
\newtheorem{theorem}{Theorem}[section]
\newtheorem{lemma}[theorem]{Lemma}
\newtheorem{proposition}[theorem]{Proposition}
\numberwithin{equation}{section}
\theoremstyle{remark}
\newtheorem*{remark}{Remark}
\theoremstyle{definition}
\newtheorem{definition}{Definition}
\DeclareMathOperator{\li}{\mathrm{li}}
\DeclareMathOperator{\mcA}{\mathcal{A}}
\DeclareMathOperator{\mcX}{\mathcal{X}}
\DeclareMathOperator{\PP}{\mathcal{P}}
\def\reals{\hbox{\rm I\kern-.18em R}}
\def\complexes{\hbox{\rm C\kern-.43em
\vrule depth 0ex height 1.4ex width .05em\kern.41em}}
\def\field{\hbox{\rm I\kern-.18em F}} 
\let\svthefootnote\thefootnote
\newcommand\freefootnote[1]{%
  \let\thefootnote\relax%
  \footnotetext{#1}%
  \let\thefootnote\svthefootnote%
}
\newenvironment{section*}[2][A]{
  \section*{#2}
  \renewcommand\thesection{#1}
  \setcounter{theorem}{0}}{}
\begin{document}

\title[An effective Bombieri--Vinogradov error term for sifting problems]{An effective Bombieri--Vinogradov error term for sifting problems}

\author{Daniel R. Johnston}
\address{School of Science, UNSW Canberra, Australia}
\email{daniel.johnston@unsw.edu.au}
\thanks{This research was supported by Australian Research Council Discovery Project DP240100186 and an Australian Mathematical Society Lift-off
Fellowship.}
\date\today

\begin{abstract}
    In number theory, many major results related to the additive properties of primes are proven using the methods of sieve theory. However, in nearly every case, the existing proofs of these results are ineffective, in that explicit values for which they hold cannot be computed. The reason for this ineffectivity is due to the reliance on the Bombieri--Vinogradov theorem. In this paper, we show that any classical sifting problem with a Bombieri--Vinogradov style error term can in fact be made effective, with no loss to the asymptotic form of the original (ineffective) result. This is done by carefully modifying the sieve upper and lower bounds as to avoid the usual complications regarding the existence of a Siegel zero. We also provide some simple applications. For example, we show that one may effectively bound the number of primes $p\leq x$ such that $p+2$ is also prime by
    \begin{equation*}
        (4+o(1))C_2\frac{x}{(\log x)^2},
    \end{equation*}
    where
    \begin{equation*}
         C_2=2\prod_{p>2}\left(1-\frac{1}{(p-1)^2}\right).
    \end{equation*}
\end{abstract}

\maketitle

\freefootnote{\textit{Affiliation}: School of Science, The University of New South Wales Canberra, Australia.}
\freefootnote{\textit{Key phrases}: Bombieri--Vinogradov theorem, sieve methods, effective results, twin primes, Goldbach representations.}
\freefootnote{\textit{2020 Mathematics Subject Classification}: 11N35 (primary) 11M20, 11N36 (Secondary)}

\section{Introduction}
In this paper we are in interested in the Bombieri--Vinogradov theorem and its application to sifting problems. The Bombieri--Vinogradov theorem is a central tool in analytic number theory given as follows.
\begin{theorem}[Bombieri--Vinogradov]\label{bomthm}
    For any $A>0$,
    \begin{equation}\label{bomvineq}
        \sum_{d\leq D}\sup_{y\leq x}\max_{(a,d)=1}\left|\pi(x;d,a)-\frac{\pi(x)}{\varphi(d)}\right|=O_A\left(\frac{x}{(\log x)^A}\right),
    \end{equation}
    where $D=\sqrt{x}/(\log x)^B$ for some constant $B>0$ depending on $A$.
\end{theorem}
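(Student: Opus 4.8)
This is a classical theorem; I would reproduce Bombieri's large--sieve proof, streamlined by Vaughan's identity, observing that the only step that forfeits effectivity -- the small--modulus range, handled by Siegel--Walfisz -- is exactly the one the later sections of the paper are built to circumvent. First I would pass from $\pi$ to the Chebyshev function $\psi(y;d,a)=\sum_{n\le y,\ n\equiv a\ (d)}\Lambda(n)$: discarding prime powers costs only $O(D\sqrt{x}\log x)$ over all $d\le D$, and two partial summations convert $\sup_{y\le x}\lvert\psi(y;d,a)-y/\varphi(d)\rvert$ into the quantity in \eqref{bomvineq} up to a bounded factor. Expanding in Dirichlet characters modulo $d$, the principal character supplies the main term $y/\varphi(d)$ with an error controlled by the prime number theorem and by $\sum_{p\mid d}\log p$; reducing every other character to the primitive character of conductor $q\mid d$ inducing it, and using $\sum_{d\le D,\ q\mid d}\varphi(d)^{-1}\ll(\log D)\varphi(q)^{-1}$, the whole problem reduces -- up to errors that are visibly $O_A(x(\log x)^{-A})$ -- to establishing
\[
T:=\sum_{1<q\le D}\frac{1}{\varphi(q)}\sum_{\chi\bmod q}^{*}\ \max_{y\le x}\lvert\psi(y,\chi)\rvert\ \ll_A\ \frac{x}{(\log x)^{A+1}},\qquad \psi(y,\chi):=\sum_{n\le y}\Lambda(n)\chi(n),
\]
where the star restricts to primitive characters.

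\textbf{Small versus large moduli.} Split $T$ at $R:=(\log x)^{B_0}$ with $B_0=B_0(A)$ to be chosen large. For $q\le R$, Siegel--Walfisz gives $\psi(y,\chi)\ll_A y\exp(-c\sqrt{\log y})$ uniformly, and as $\sum_{q\le R}\varphi(q)^{-1}\cdot\#\{\text{primitive }\chi\bmod q\}\le R$ this portion is $\ll_A x(\log x)^{B_0}\exp(-c\sqrt{\log x})\ll_A x(\log x)^{-A-1}$. For $R<q\le D$ I would decompose dyadically in $q$ (there are $O(\log x)$ blocks, with $\varphi(q)^{-1}\ll(\log\log x)/Q'$ on $q\in(Q',2Q']$), reducing to proving, for each dyadic $Q'\in[R,D]$, that $\sum_{Q'<q\le2Q'}\sum_{\chi\bmod q}^{*}\max_{y\le x}\lvert\psi(y,\chi)\rvert\ll_A Q'\,x\,(\log x)^{-A-O(1)}$.

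\textbf{Vaughan's identity and the large sieve.} For a fixed dyadic block I would apply Vaughan's identity with parameters $U=V=(\log x)^{B_1}$, $B_1=B_1(A)$ large, writing $\Lambda$ as $O(\log x)$ ``Type~I'' terms $\sum_{j\le UV}c_j\chi(j)\sum_{\ell\le y/j}\chi(\ell)$ (with $\lvert c_j\rvert\ll\log j$, possibly a $\log$-weight on $\ell$), $O((\log x)^{2})$ dyadic ``Type~II'' bilinear terms $\sum_{V<m\le M'}\sum_{U<h\le N',\ mh\le y}\Lambda(m)a_h\chi(mh)$ (with $\lvert a_h\rvert\le\tau(h)$ and $M'N'\asymp x$, hence $U<M',N'<x/U$), and a remainder supported on $n\le UV$ of size $O(UV)$ per character. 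For Type~I, as $\chi$ is non-principal I would complete the inner sum, $\lvert\sum_{\ell\le Y}\chi(\ell)\rvert\le q$ (or use P\'olya--Vinogradov); summing over the $\asymp(Q')^2$ characters and over $j\le UV$ gives a contribution $\ll(Q')^{3}(\log x)^{O(B_1)}$, which is admissible once $B$ is large. For Type~II I would strip the cutoff $mh\le y$ (and thereby $\max_y$) by a standard dyadic/Perron argument costing $O((\log x)^{2})$, apply Cauchy--Schwarz in $\chi$, and invoke the multiplicative large sieve inequality $\sum_{q\le Q'}\frac{q}{\varphi(q)}\sum_{\chi\bmod q}^{*}\lvert\sum_{n\asymp N}b_n\chi(n)\rvert^{2}\le(N+(Q')^{2})\sum\lvert b_n\rvert^{2}$ in each factor, yielding a bound $\ll\bigl((M^{2}+(Q')^{2}M)(N^{2}+(Q')^{2}N)\bigr)^{1/2}(\log x)^{O(1)}$. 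Splitting into the three regimes ($M,N\le(Q')^{2}$; exactly one of them $>(Q')^{2}$; both $>(Q')^{2}$), this is $\ll Q'x(\log x)^{-A-O(1)}$ by using, respectively, $Q'\le D=\sqrt{x}/(\log x)^{B}$ (so $B$ large), the constraint $M,N<x/(\log x)^{B_1}$ (so $B_1$ large), and $Q'>R=(\log x)^{B_0}$ (so $B_0$ large). Summing over the $O((\log x)^{2})$ pairs $(M',N')$, the $O(\log x)$ dyadic $Q'$, and finally over the splitting above closes the bound on $T$.

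\textbf{Expected obstacle.} The real work is the Type~II estimate: a naive application of the large sieve loses a power of $x$, and the argument closes only because Vaughan's identity pins the bilinear lengths $M,N$ into $((\log x)^{B_1},x/(\log x)^{B_1})$ and because, for $Q'\le\sqrt{x}/(\log x)^{B}$, the large--sieve threshold $(Q')^{2}$ separates $M$ (or $N$) from $x$. The remainder is bookkeeping, but bookkeeping to be done with care: the parasitic powers of $\log x$ picked up in the character reduction, in Vaughan's identity, in the divisor second moments, and in removing $\sup_{y\le x}$ must all be absorbed by choosing $B_0$, then $B_1$, then $B$ (each depending on $A$) large enough. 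I expect that bookkeeping, together with the clean removal of $\sup_{y\le x}$, to be the only delicate points; the ineffectivity of $O_A(\cdot)$ enters solely through Siegel--Walfisz in the small--modulus range.
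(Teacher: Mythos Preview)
The paper does not prove this statement: Theorem~\ref{bomthm} is quoted in the introduction as the classical Bombieri--Vinogradov theorem, with a reference to Sedunova for the sharpest known dependence $B=A-1$, and the paper's own results (Theorems~\ref{upperthm} and \ref{lowerthm}) are built on top of it rather than supplying a proof. So there is nothing in the paper to compare your argument against.

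That said, your sketch is a correct outline of the standard Bombieri--Gallagher--Vaughan proof. The reduction to primitive characters, the small-modulus range via Siegel--Walfisz, the dyadic decomposition in $q$, Vaughan's identity, and the large-sieve treatment of the Type~II sums are all as in, e.g., Davenport or Iwaniec--Kowalski. Your choice $U=V=(\log x)^{B_1}$ is legitimate (it is the ``logarithmic'' parametrisation rather than the more common $U=V\asymp x^{1/3}$): the Type~II variables then genuinely lie in $((\log x)^{B_1},x/(\log x)^{B_1})$ because in Vaughan's bilinear term both factors exceed the truncation parameters, and your three-regime analysis of $((M^{2}+(Q')^{2}M)(N^{2}+(Q')^{2}N))^{1/2}$ closes correctly once $B_0$, $B_1$, $B$ are chosen in that order. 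The only cosmetic point is that the Type~I bound you quote, $\ll (Q')^{3}(\log x)^{O(B_1)}$, needs $B$ large relative to $B_1$ (not just relative to $A$) to be admissible, which is consistent with your final ``choose $B_0$, then $B_1$, then $B$'' but worth making explicit. Your identification of Siegel--Walfisz as the sole source of ineffectivity is exactly the point the paper exploits in Sections~\ref{sect2}--\ref{lowersect}.
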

Here, as usual, $\pi(x)$ is the number of primes less than or equal to $x$, 
\begin{equation*}
    \pi(x;d,a)=\#\{p\leq x:p\equiv a\ \text{(mod}\ d)\}
\end{equation*}
counts the number of primes in the arithmetic progression $a$ mod $d$, and $\varphi(\cdot)$ is the Euler totient function. 

Up to a power of $\log x$,~\eqref{bomvineq} is the same bound as that obtained by assuming the Generalised Riemann Hypothesis, in which one has the pointwise bound (see e.g.\ \cite[Corollary 13.8]{montgomery2007multiplicative})
\begin{equation*}
    \left|\pi(x;d,a)-\frac{\pi(x)}{\varphi(d)}\right|\ll\sqrt{x}\log x.
\end{equation*}

One of the major shortcomings of the Bombieri--Vinogradov theorem is that every known proof is ineffective. In particular, it is not possible to explicitly determine how large $x$ must be in terms of $A$ to obtain a bound of the strength \eqref{bomvineq}. Consequently, this means that any direct application of the Bombieri--Vinogradov theorem is also ineffective. The main reason for this ineffectivity is the potential existence of Siegel zeros. That is, real-valued of zeros of Dirichlet $L$-functions very close to $1$.

In this paper we build upon recent work (e.g.~\cite{akbary2015variant,Yamada2015,liu2017effective,BJV2025}) on developing effective variants of the Bombieri--Vinogradov theorem, which circumvent the usual complications regarding Siegel zeros. Our main results (Theorems \ref{upperthm} and \ref{lowerthm}) are in the context of sieve theory and thus yield effective versions of many bounds, including those related to the twin prime and Goldbach conjectures. To make these sifting results effective, we have to carefully treat the case of ``moderate" moduli (approximately $\log x\leq d\leq (\log x)^A$) in the Bombieri--Vinogradov theorem under the assumption of Siegel zeros. This is done by working with slightly altered sifting sets, which avoid any Siegel zeros and introduce an asymptotically small error. By contrast, in classical treatments this range of moduli is covered using Siegel's ineffective bound for real zeros of Dirichlet $L$-functions~\cite{siegel1935klassenzahl}.

As a simple application, we prove the following effective upper bounds. See also Theorems \ref{BJVthm} and \ref{p2thm} for examples of effective lower bound results that can be obtained via our methods. 
\begin{theorem}\label{twingoldthm}
    For $x>0$ and $n$ even, let
    \begin{align}
        \Pi_2(x)&=\#\{p\leq x:p\ \text{prime}\ \text{and}\ p+2\ \text{prime}\},\ \text{and}\label{pi2def}\\
        G(n)&=\#\{p\leq n: n-p\ \text{is}\ \text{prime}\}\label{Gndef}
    \end{align}
    count the number of twin primes less than $x$ and the number of representations of $n$ as the sum of two primes. Further, let
    \begin{equation*}
        C_2=2\prod_{p>2}\left(1-\frac{1}{(p-1)^2}\right)\quad\text{and}\quad C_n=C_2\prod_{\substack{p\mid n\\p>2}}\frac{p-1}{p-2}.
    \end{equation*}
    Then, for any $\varepsilon>0$ and sufficiently large $x\geq X(\varepsilon)$ and $n\geq N(\varepsilon)$, one has
    \begin{equation}\label{pi2gbound}
        \Pi_2(x)\leq (4+\varepsilon)C_2\frac{x}{(\log x)^2}\quad\text{and}\quad G(n)\leq(4+\varepsilon)C_n\frac{n}{(\log n)^2}
    \end{equation}
    where $X(\varepsilon)$ and $N(\varepsilon)$ are effectively computable.
\end{theorem}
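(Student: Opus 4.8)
The plan is to reduce both inequalities to a single dimension-one (linear) sifting problem to which the effective upper bound sieve of Theorem~\ref{upperthm} applies directly, and then to run the classical evaluation of the sieve main term while checking that every auxiliary ingredient is itself effective. For the twin primes I would take the sequence $\mathcal{A}=(a_m)$ with $a_m=\mathbf{1}\{m\le x+2,\ m-2\text{ prime}\}$, to be sifted by the odd primes. Then $|\mathcal{A}|=\pi(x)+O(1)$ and, for squarefree odd $d$,
\[
  |\mathcal{A}_d|=\pi(x;d,-2)=\frac{\pi(x)}{\varphi(d)}+r_d,
\]
so the density is the multiplicative function $g(d)=1/\varphi(d)$ (a linear sieve, since $\sum_{p<y}g(p)\log p=\log y+O(1)$), and $r_d$ is exactly the error term for primes in the progression $-2\bmod d$. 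The hypothesis $\sum_{d\le D}|r_d|=O_A(x/(\log x)^A)$ with $D=\sqrt x/(\log x)^B$ is precisely the Bombieri--Vinogradov input that Theorem~\ref{upperthm} makes effective. For $G(n)$ I would take $a_m=\mathbf{1}\{n-m\text{ prime}\}$ for $m\le n$, sifted by the primes not dividing $2n$: a prime $q\mid 2n$ removes only the single term $m=q$, so discarding those primes costs $O(\log n)$, while for $q\nmid 2n$ one has $|\mathcal{A}_q|=\pi(n;q,n)=\pi(n)/\varphi(q)+r_q$, again a linear sieve with a remainder of the same shape.

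Next I would apply Theorem~\ref{upperthm} with level $D=\sqrt x/(\log x)^B$ and sifting parameter $z=x^{1/2-\epsilon}$ for a small fixed $\epsilon>0$, so that $s=\log D/\log z\to(1-2\epsilon)^{-1}\in(1,3)$ and the linear sieve upper bound function equals $F(s)=2e^{\gamma}/s$. This yields, effectively,
\[
  S(\mathcal{A},z)\le \pi(x)\,V(z)\,\bigl(F(s)+o(1)\bigr)+(\text{effective error}),\qquad V(z)=\prod_{2<p<z}\Bigl(1-\tfrac1{p-1}\Bigr).
\]
Using $1-\frac1{p-1}=\bigl(1-\frac1p\bigr)\bigl(1-\frac1{(p-1)^2}\bigr)$, Mertens' theorem in the explicit form of Rosser--Schoenfeld, and an explicit tail bound for the convergent product defining $C_2$, one gets $V(z)=\frac{C_2e^{-\gamma}}{\log z}(1+o(1))$ with an effective $o(1)$; for the Goldbach sequence the product runs over $p\nmid n$, which multiplies $V(z)$ by $\prod_{p\mid n,\,p>2}\frac{p-1}{p-2}$, exactly the factor converting $C_2$ into $C_n$. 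Since $F(s)=2e^{\gamma}\log z/\log D$, the factor $\log z$ cancels in $V(z)F(s)$ and only $2C_2/\log D$ (resp. $2C_n/\log D$) survives; inserting an effective prime number theorem for $\pi(x)$ and $\log D=\tfrac12\log x+O(\log\log x)$ gives
\[
  S(\mathcal{A},z)\le(4+o(1))\,C_2\,\frac{x}{(\log x)^2}\qquad\text{and}\qquad S(\mathcal{A},z)\le(4+o(1))\,C_n\,\frac{n}{(\log n)^2}.
\]

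To finish, every twin prime $p\le x$ with $p+2>z$ is counted by $S(\mathcal{A},z)$ (a prime has no proper divisor), while the twin primes with $p+2\le z$ number $O(\sqrt x)$; $S(\mathcal{A},z)$ may over-count integers $m\le x+2$ that are products of two primes exceeding $z$, but an over-count only helps an upper bound. Hence $\Pi_2(x)\le S(\mathcal{A},z)+O(\sqrt x)$, and likewise $G(n)\le S(\mathcal{A},z)+O(\sqrt n)$, so absorbing the $o(1)$ into $\varepsilon$ gives the stated bounds for $x\ge X(\varepsilon)$ and $n\ge N(\varepsilon)$; these thresholds are explicit because they are assembled only from the effective constants in Theorem~\ref{upperthm}, in Rosser--Schoenfeld, and in the effective PNT.

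I expect the main obstacle to be organisational rather than conceptual: the one genuinely hard analytic input — an effective Bombieri--Vinogradov bound usable inside a sieve — is already supplied by Theorem~\ref{upperthm}, so the work here is (i) casting the twin-prime and Goldbach problems as linear sieves satisfying its hypotheses, including the handling of primes dividing $n$ and of the harmless over-count, and (ii) propagating effectivity through the auxiliary estimates (explicit Mertens, explicit PNT, the explicit linear sieve function $F$, and the tail of the twin-prime product) so that $X(\varepsilon)$ and $N(\varepsilon)$ emerge explicitly.
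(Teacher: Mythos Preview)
Your proposal is correct and follows essentially the same route as the paper: cast the twin-prime and Goldbach counts as linear ($\kappa=1$) sifting problems with density $g(d)=1/\varphi(d)$, invoke Theorem~\ref{upperthm} with the Rosser--Iwaniec upper bound $F(s)=2e^{\gamma}/s$, evaluate $V(z)$ via (effective) Mertens, and finish with an effective prime number theorem. The only cosmetic differences are that the paper takes $z=D$ (so $s=1$) rather than $z=x^{1/2-\epsilon}$, and it records the relation $\Pi_2(x)=\pi(z)+S(\mathcal{A},\mathcal{P},z)$ directly rather than via your over-count remark; neither changes the argument.
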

Note that heuristics for the twin prime and Goldbach conjectures suggest that (see \cite[Conjectures A and B]{hardy1923some})
\begin{equation*}
    \Pi_2(x)\sim C_2\frac{x}{(\log x)^2}\quad\text{and}\quad G(n)\sim C_n\frac{n}{(\log n)^2}.
\end{equation*}
Previously, the best effective estimates for $\Pi_2(x)$ and $G(n)$ gave a constant of $8$ in place of $4+\varepsilon$ in \eqref{pi2gbound} (\cite{siebert1976montgomery} or \cite[Lemma~5]{riesel1983sums}). Thus, Theorem \ref{twingoldthm} can be viewed as improving upon previous work by a factor of 2.

If one is not concerned with effectivity, then the constant $4+\varepsilon$ can be improved with more elaborate techniques. Most notably, in recent work by Lichtman \cite{lichtman2023primes,lichtman2025modification} and Pascadi \cite{pascadi2025exponents}, improved forms for the remainder term in the linear sieve are given, which allow one to go further than is possible with the Bombieri--Vinogradov theorem. This yields leading constants of $3.203+\varepsilon$ for $\Pi_2(x)$ \cite[Corollary 1.4]{pascadi2025exponents} and $3.3907+\varepsilon$ for $G(n)$ \cite[Theorem 1.2]{lichtman2023primes}. It is likely that some variant of our results could be proven that makes Lichtman and Pascadi's results effective, but we do not attempt to do so here.

Finally, we remark that one can readily obtain strong bounds of the form~\eqref{pi2gbound} under the assumption of well-known conjectures. For example, assuming an effective form of the Elliott-Halberstam conjecture (which states that~\eqref{bomvineq} holds for $D$ as large as $x^{1-\varepsilon}$) gives~\eqref{pi2gbound} with a leading constant of $2+\varepsilon$.

\subsection{Overview of related results}
In all standard proofs of the Bombieri--Vinogradov theorem, one applies the ineffective Siegel--Walfisz theorem \cite[Corollary 11.21]{montgomery2007multiplicative} for small moduli $d$; that is $d<(\log x)^B$ for any fixed $B>0$. However, if one is okay with a weaker bound than that in \eqref{bomvineq}, then it is possible to apply Page's effective bound \cite[Theorem II]{page1935number} on Siegel zeros, in place of the Siegel--Walfisz theorem. This procedure was detailed by Liu, who obtained the following result \cite[Theorem~1]{liu2017effective}.
\begin{theorem}[Liu]\label{liuthm}
    There exists a computable constant $B>0$ such that
    \begin{equation}\label{liueq}
        \sum_{d\leq D}\sup_{y\leq x}\max_{(a,d)=1}\left|\pi(x;d,a)-\frac{\pi(x)}{\varphi(d)}\right|=O\left(\frac{x(\log\log x)^9}{(\log x)^2}\right),
    \end{equation}
    where $D=\sqrt{x}/(\log x)^B$ and the big-$O$ term is effective.
\end{theorem}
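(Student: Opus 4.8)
The plan is to run the classical proof of the Bombieri--Vinogradov theorem and to replace its one ineffective ingredient --- the Siegel--Walfisz theorem, which is applied to the small moduli --- by the effective de la Vall\'ee Poussin zero-free region together with Page's theorem, and then to estimate directly the single term that this substitution leaves uncontrolled: the possible contribution of an exceptional (Siegel) zero.

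First I would reduce, by the usual partial summation, to bounding $\sum_{q\leq Q}\sup_{y\leq x}\max_{(a,q)=1}\bigl|\psi(y;q,a)-\psi(y)/\varphi(q)\bigr|$ with $Q=\sqrt{x}/(\log x)^{B}$, where $\psi(y;q,a)=\sum_{n\leq y,\ n\equiv a\ (q)}\Lambda(n)$. After expanding into Dirichlet characters, reducing to primitive characters, and fixing a threshold $P$ (a sufficiently large power of $\log x$), the moduli $q>P$ are handled exactly as in the unconditional argument: Vaughan's identity converts each $\psi(y,\chi)$ into Type~I and Type~II bilinear forms, the multiplicative large sieve controls these on average over $\chi$, and Gallagher's device absorbs the supremum over $y$. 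This step is entirely effective and contributes an error of the shape $\bigl(x/P+x^{5/6}\log x+x^{1/2}Q\bigr)(\log x)^{O(1)}$, which is $\ll x(\log\log x)^{9}/(\log x)^{2}$ once $P$ and $B$ are chosen large enough; tracking the constants then shows that $B$ is computable.

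For the moduli $q\leq P$ I would apply the effective prime number theorem in arithmetic progressions. By Page's theorem there is at most one exceptional real primitive character $\chi_{1}$, of some conductor $r_{1}\leq P$, whose $L$-function has a real zero $\beta_{1}$ inside the de la Vall\'ee Poussin region, while every other $L(s,\chi)$ of conductor at most $P$ enjoys the classical zero-free region. If there is no such $\chi_{1}$, then optimizing the truncation height in the explicit formula yields $\psi(y,\chi)\ll y\exp(-c\sqrt{\log y})$ throughout the relevant range (with the trivial bound $|\psi(y,\chi)|\leq\psi(y)$ covering the few values of $y$ that are too small), so the entire small-modulus contribution is $O\bigl(x\exp(-c'\sqrt{\log x})\bigr)$; in this case one in fact recovers the full strength of the ineffective theorem. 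If $\chi_{1}$ does exist, the same estimate disposes of every character other than $\chi_{1}$, while $\chi_{1}$ leaves a genuine secondary main term of size $\asymp x^{\beta_{1}}/\varphi(q)$ for each $q\leq P$ divisible by $r_{1}$; summing over such $q$ yields a contribution of order $x^{\beta_{1}}(\log\log x)^{O(1)}/\varphi(r_{1})$.

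Bounding this last quantity is the crux, and the only place where effectivity genuinely costs something. Here I would feed in Page's effective lower bound $1-\beta_{1}\gg\bigl(\sqrt{r_{1}}\,\log^{2}r_{1}\bigr)^{-1}$. When $r_{1}$ lies below a threshold of size roughly $(\log x)^{2}/(\log\log x)^{O(1)}$, this already forces $x^{\beta_{1}}$ to be smaller than $x$ divided by any fixed power of $\log x$, so the term is negligible; when $r_{1}$ lies above the threshold one cannot improve on $x^{\beta_{1}}$ --- this is exactly the obstruction that Siegel's ineffective theorem removes --- but there $\varphi(r_{1})\gg(\log x)^{2}/(\log\log x)^{O(1)}$, and using only $x^{\beta_{1}}\leq x$ the term is $\ll x(\log\log x)^{O(1)}/(\log x)^{2}$. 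The precise exponent $9$ emerges from the size of the threshold together with the $\log\log$-losses in the totient and coefficient estimates. I expect the main technical obstacle to be arranging the whole argument so that the exceptional modulus $r_{1}$ appears only with a coefficient of size $O(\log\log x)$ and never $O(\log x)$ --- that is, so that it is not forced to propagate through the full range of moduli up to $Q$, where a careless reduction to primitive characters would reintroduce a factor $\log Q$ and ruin the estimate. Everything else is the standard, already-effective Bombieri--Vinogradov machinery.
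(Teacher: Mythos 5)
The paper does not prove this statement; it is quoted verbatim (up to a cosmetic change) from Liu~\cite[Theorem~1]{liu2017effective}, and the only comment the paper adds is the remark immediately following, noting that Liu's original result has $\li(x)$ in place of $\pi(x)$ and that one passes between the two forms via an effective prime number theorem such as \cite[Corollary 1.3]{johnston2023some}. So there is no ``paper's own proof'' to compare against; the relevant comparison is with Liu's argument, which the paper summarizes in one sentence as applying Page's effective bound on Siegel zeros in place of the Siegel--Walfisz theorem.

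With that caveat, your sketch is a faithful reconstruction of Liu's strategy. You correctly isolate the single ineffective ingredient in the classical Bombieri--Vinogradov proof (Siegel--Walfisz on small moduli), replace it by Page's theorem together with the effective de la Vall\'ee Poussin zero-free region, and identify the crux: the possible exceptional character $\chi_1$ of conductor $r_1$ and its zero $\beta_1$. Your dichotomy on $r_1$ --- using Page's $1-\beta_1\gg (\sqrt{r_1}\log^2 r_1)^{-1}$ to kill $x^{\beta_1}$ when $r_1$ is small, and using the size of $\varphi(r_1)$ when $r_1$ is large --- is exactly the mechanism that produces the $(\log\log x)^{O(1)}/(\log x)^2$ loss rather than a clean $(\log x)^{-A}$, and your observation that the exceptional modulus must be carried with only a $\log\log$ coefficient is the genuine technical point. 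Two things you leave implicit that are worth flagging: first, the statement in the paper is in terms of $\pi(x;d,a)-\pi(x)/\varphi(d)$, whereas Liu's theorem is stated with $\li(x)/\varphi(d)$, so a final step invoking an effective prime number theorem is needed to land in the form \eqref{liueq}; second, the exponent $9$ is not derived from first principles in your sketch but attributed to bookkeeping, which is fair --- it is what Liu obtains, and the present paper makes no attempt to sharpen it, instead proving results (Propositions~\ref{smallpsiprop}--\ref{mainPNTAPprop}) that trade the unrestricted sum over $d$ for extra control tied to the exceptional modulus $k_1$.
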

\begin{remark}
    Liu actually uses the logarithmic integral $\li(x)$ in place of $\pi(x)$, but it is readily converted to the form \eqref{liueq} above by applying an effective version of the prime number theorem $\pi(x)\sim \li(x)$ such as \cite[Corollary 1.3]{johnston2023some}.
\end{remark}
Liu thus gives a fully effective version of the Bombieri--Vinogradov theorem, albeit with a restriction on the power of $\log$ in the big-$O$ term. In \cite{liu2017effective}, Liu gives several applications of Theorem \ref{liuthm}. For example, they remark that \eqref{liueq} can be inputted into the classical proofs of Hooley \cite{hooley1957representation} or Linnik \cite{linnik1960asymptotic} to determine an explicit constant $C$ such that every integer $N>C$ can be expressed as the sum of two squares and a prime.

Another effective variant of the Bombieri--Vinogradov theorem is obtained by simply excluding the small moduli $d$ from the sum in \eqref{bomvineq} that are usually covered by the Siegel--Walfisz theorem. In this direction, Akbary and Hambrook \cite{akbary2015variant} proved the following explicit result, which has since been refined by Sedunova \cite{sedunova2018partial}.
\begin{theorem}[{\cite[Corollary 1.4]{akbary2015variant}}]\label{akhamthm}
    Let $x\geq 4$ and $1\leq D_1\leq D\leq x^{1/2}$. Let $\ell(d)$ denote the least divisor of $d$. Then,
    \begin{align}\label{akhameq}
        &\sum_{\substack{d\leq D\\\ell(d)>D_1}}\sup_{y\leq x}\max_{(a,d)=1}\left|\pi(x;d,a)-\frac{\pi(x)}{\varphi(d)}\right|\notag\\
        &\qquad\qquad<347\left(4\frac{x}{Q_1}+4\sqrt{x}Q+18x^{\frac{2}{3}}Q^{\frac{1}{2}}+5x^{\frac{5}{6}}\log\left(\frac{eQ}{Q_1}\right)\right)(\log x)^{\frac{9}{2}}.
    \end{align}
\end{theorem}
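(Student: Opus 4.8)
\emph{Proof sketch.} The plan is to follow the classical proof of the Bombieri--Vinogradov theorem (as in \cite{montgomery2007multiplicative}) and to carry out every estimate with explicit constants. The structural point that makes an effective bound possible is the restriction $\ell(d)>D_1$: it forces every Dirichlet character entering the argument to be induced by a primitive character of conductor exceeding $D_1$, so the ineffective Siegel--Walfisz theorem --- which in the usual proof handles the tiny moduli --- is never needed and is everywhere replaced by the large sieve.

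First I would pass from $\pi$ to $\psi$. By partial summation, the quantity $\sup_{y\le x}\max_{(a,d)=1}|\pi(y;d,a)-\pi(y)/\varphi(d)|$ is controlled, up to explicit bounded factors, by $\sup_{y\le x}\max_{(a,d)=1}|\psi(y;d,a)-\psi(y)/\varphi(d)|$ together with the prime-power contribution, which is $O(\sqrt x\log x)$ per modulus and hence $O(D\sqrt x\log x)$ once summed over $d\le D$ --- absorbed into the $\sqrt x\,D$ term. Next, for $(a,d)=1$ I would expand in Dirichlet characters,
\begin{equation*}
    \psi(y;d,a)-\frac{\psi(y)}{\varphi(d)}=\frac{1}{\varphi(d)}\sum_{\chi\ne\chi_0\ (\mathrm{mod}\ d)}\bar\chi(a)\,\psi(y,\chi),
\end{equation*}
replace each $\chi$ by the primitive character $\chi^{*}$ inducing it (an explicit error $O(\log x\log d)$ from the primes dividing $d$), sum over $d\le D$ with $\ell(d)>D_1$, and regroup the primitive characters by their conductor $q$. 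Since $q\mid d$ and every prime factor of $d$ exceeds $D_1$ one automatically has $q>D_1$, and the outcome is a bound of the shape
\begin{equation*}
    \sum_{D_1<q\le D}\Bigg(\sum_{\substack{d\le D,\ q\mid d\\\ell(d)>D_1}}\frac{1}{\varphi(d)}\Bigg)\,\sideset{}{^*}\sum_{\chi\ (\mathrm{mod}\ q)}\sup_{y\le x}|\psi(y,\chi)|\ +\ (\text{explicit lower-order terms}),
\end{equation*}
where the inner weight is $\ll\varphi(q)^{-1}\bigl(1+\log(D/q)\bigr)$ by an effective bound for $\sum_{m\le D/q}1/\varphi(m)$.

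The heart of the proof is an explicit mean-value estimate for $\sideset{}{^*}\sum_{\chi\ (\mathrm{mod}\ q)}\sup_{y\le x}|\psi(y,\chi)|$ over dyadic ranges $Q<q\le 2Q$ with $D_1\le Q\le D$. To get it I would apply Vaughan's identity, writing $\Lambda$ as a combination of ``type I'' sums $\sum_{m\le M}a_m(\cdots)$ and genuinely bilinear ``type II'' sums $\sum_{M<m}\sum_k a_mb_k$ with splitting parameters near $x^{1/3}$, and then bound the resulting character sums by the multiplicative large sieve in explicit form (Montgomery--Vaughan: $\sum_{q\le Q}\tfrac{q}{\varphi(q)}\sideset{}{^*}\sum_\chi|\sum_n a_n\chi(n)|^2\le(N+Q^2)\sum_n|a_n|^2$), using a maximal version of the large sieve --- or a Gallagher-type smoothing with explicit error --- to handle the $\sup_{y\le x}$. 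Summing the dyadic estimates against the weight $\log(D/q)/\varphi(q)$ over $D_1<q\le D$ then produces the four terms on the right of \eqref{akhameq}: the diagonal of the large sieve at the smallest conductors gives $x/D_1$; the large-conductor (main error) term gives $\sqrt x\,D$; the two type-I pieces give $x^{2/3}D^{1/2}$ and $x^{5/6}$, the latter carrying a factor $\log(eD/D_1)$ because it is essentially constant over each dyadic range and so is picked up $\asymp\log(D/D_1)$ times; and the powers of $\log x$ accumulated from Vaughan's identity, the large sieve, and the dyadic summation are bounded in total by $(\log x)^{9/2}$.

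The main obstacle is this last step: making the Vaughan-plus-large-sieve machinery fully explicit while keeping the overall constant below $347$. The delicate points are (i) the explicit maximal large sieve needed to control $\sup_{y\le x}|\psi(y,\chi)|$ uniformly, which costs an extra power of $\log x$ and requires careful tracking of its constant, and (ii) the bookkeeping in choosing the Vaughan parameters so as to balance the type-I and type-II contributions, and in carrying all constants through the dyadic summation, where the precise combination $x/D_1+\sqrt x\,D+x^{2/3}D^{1/2}+x^{5/6}\log(eD/D_1)$ emerges. The preliminary steps --- reduction to $\psi$, character expansion, rearrangement by conductor --- are routine but must equally be done with explicit constants; crucially, none of them involves any information about exceptional zeros, precisely because the hypothesis $\ell(d)>D_1$ has already excised the small conductors.
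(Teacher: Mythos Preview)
The paper does not prove this statement at all: Theorem \ref{akhamthm} is quoted from \cite[Corollary 1.4]{akbary2015variant} as background and is used only as motivation in the introduction, with no proof supplied. So there is no ``paper's own proof'' to compare against.

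That said, your sketch is exactly the strategy Akbary and Hambrook carry out in the cited reference: reduce to $\psi$ via partial summation, expand in Dirichlet characters, observe that the condition $\ell(d)>D_1$ forces every inducing primitive character to have conductor $>D_1$ (so the Siegel--Walfisz input is bypassed entirely), regroup by primitive conductor, apply Vaughan's identity, and estimate the resulting bilinear sums by an explicit (maximal) multiplicative large sieve over dyadic ranges of the conductor. Your identification of the main obstacle --- tracking the constants through the maximal large sieve and the Vaughan decomposition to land at $347$ and $(\log x)^{9/2}$ --- is accurate; this is precisely the book-keeping that occupies most of \cite{akbary2015variant}. Two minor points worth noting about the statement as printed here: $\ell(d)$ should be the least \emph{prime} divisor of $d$ (else $\ell(d)=1$ always), and the variables $Q_1,Q$ on the right-hand side are evidently meant to be $D_1,D$.
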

In particular, if $D_1$ is a suitably large power of $\log x$, then \eqref{akhameq} reduces to the Bombieri--Vinogradov theorem with a restricted sum over $d$. In \cite[Section~2]{akbary2015variant}, Akbary and Hambrook similarly list applications of Theorem \ref{akhamthm}, whereby the Bombieri--Vinogradov theorem is only required in the weaker form \eqref{akhameq}. For example, in \cite[Theorem 2.2]{akbary2015variant} they give a fully explicit bound for the partial sums
\begin{equation}\label{normalpm1}
    \sum_{p\leq x}\left(\omega(p-1)-\log\log x\right)^2
\end{equation}
where $\omega(p-1)$ is the number of distinct prime divisors of $p-1$ with $p$ prime. Notably, \eqref{normalpm1} is closely related to the Titchmarsh divisor problem and a classical result of Erd\H{o}s \cite{erdos1935normal} on the normal order of $\omega(p-1)$.

\subsection{Sieve theory and the Bombieri--Vinogradov theorem}
A major application of the Bombieri--Vinogradov theorem is to sieve theory. In sieve theory one considers a finite set of integers $\mcA$ to be sifted by an infinite set of primes $\PP$. More precisely, one is concerned with bounding
\begin{equation}\label{SAPzeq}
    S(\mathcal{A},\mathcal{P},z):=\#\{a\in\mcA:p\in\PP\ \text{and}\ p\mid a\implies p\geq z\}.
\end{equation}
In many settings, the Bombieri--Vinogradov theorem is used to obtain upper and lower bounds for $S(\mcA,\PP,z)$. As a (non-exhaustive) list of examples, one can use these upper and lower bounds to obtain: 
\begin{enumerate}
    \item Upper bounds for the number of twin primes $\leq x$ \cite[Theorem 3.11]{halberstam1974sieve}.
    \item Upper bounds on the number of Goldbach representations of an even number $N$ as the sum of two primes \cite[Theorem 3.11]{halberstam1974sieve}.
    \item Upper bounds for the number of primes represented by a polynomial $F(p)$ with prime arguments \cite[Theorem 5.6]{halberstam1974sieve}.
    \item Lower bounds for the number of representations of even integers $N$ as the sum of a prime and a number with at most $2$ prime factors \cite[Theorem~11.1]{halberstam1974sieve},
\end{enumerate}
along with several generalisations of these results (see e.g.\ \cite{franze2020almost,li2023additive,johnston2023some}). Each of the applications above require a stronger form of the Bombieri--Vinogradov than that given in Theorems \ref{liuthm} and \ref{akhamthm}. Thus, further work is required to make them effective. 

In the case of the application (4) above, known as Chen's theorem, an effective and explicit version was recently obtained by Bordignon, Starichkova and the author of this paper \cite{BJV2025}. This result was obtained by significantly correcting and improving upon unpublished work of Yamada \cite{Yamada2015}.
\begin{theorem}[{\cite[Corollary 1.5]{BJV2025}}]\label{BJVthm}
    Every even integer $N>\exp(\exp(32.7))$ can be represented as the sum of a prime and a square-free number with at most two prime factors.
\end{theorem}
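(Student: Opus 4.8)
The plan is to carry out Chen's weighted sieve with all constants made explicit, replacing every appeal to the Bombieri--Vinogradov theorem by the effective bounds of Theorems \ref{upperthm} and \ref{lowerthm}. Write $R(N)$ for the number of ways to write the even $N$ as a prime plus a squarefree integer having at most two prime factors; we must show $R(N)>0$ whenever $N>\exp(\exp(32.7))$. Fix such an $N$, put $x=N$, set $\mathcal{A}=\{N-p:2<p<N,\ p\nmid N\}$, and let $\mathcal{P}$ be the set of primes not dividing $2N$. With parameters $z=x^{1/8}$ and $u=x^{1/3}$ (to be optimised), and with $p,q,q_i$ always denoting primes, Chen's weighting gives
\begin{equation*}
R(N)\ \geq\ S(\mathcal{A},\mathcal{P},z)\ -\ \tfrac12\sum_{z\leq q<u}S(\mathcal{A}_q,\mathcal{P},z)\ -\ \tfrac12\,\Omega\ -\ O(x^{7/8}),
\end{equation*}
where $\Omega=\#\{p<N:\ N-p=q_1q_2q_3,\ z\leq q_1<u\leq q_2\leq q_3\}$ collects the $P_3$'s that slip through the weighting and the trivial term $O(x^{7/8})$ absorbs all non-squarefree contributions. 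It thus suffices to show the right-hand side is positive.

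The first term is bounded below by the lower-bound linear (Jurkat--Richert) sieve applied to $\mathcal{A}$; the level-of-distribution hypothesis for $\mathcal{A}$ is exactly a Bombieri--Vinogradov-type estimate (moduli up to $\sqrt{x}/(\log x)^B$), so Theorem \ref{lowerthm} supplies the remainder effectively and yields $S(\mathcal{A},\mathcal{P},z)\geq(\lambda_1+o(1))\,C_N\,x/(\log x)^2$ with $\lambda_1>0$ an explicit constant (a value of the lower-bound sieve function $f$ times an elementary factor from $z$) and the $o(1)$ effective. The second term is bounded above term by term: each $S(\mathcal{A}_q,\mathcal{P},z)$, $z\leq q<u$, is a linear sieve of the sequence $\{(N-p)/q:q\mid N-p\}$, of expected size $\asymp\pi(N)/\varphi(q)$, whose distribution --- after dividing out $q$ and summing over $q$ --- is again of Bombieri--Vinogradov shape, so Theorem \ref{upperthm} applies, and summing against the prime weight $dq/\log q$ gives an upper bound $(\kappa_2+o(1))\,C_N\,x/(\log x)^2$. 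Finally, $\Omega$ is handled by Chen's switching trick: writing $q_2q_3=m$ and regarding the prime $p$ as the variable being sifted, $\Omega$ is bounded above by a combination of upper-bound linear sieves of auxiliary sequences that are again of Bombieri--Vinogradov type, to which Theorem \ref{upperthm} applies, producing a bound $(\kappa_3+o(1))\,C_N\,x/(\log x)^2$.

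Taken naively, these three estimates do not combine to something positive; the combinatorial heart of Chen's argument is a carefully chosen sequence of applications of Buchstab's identity to the second sum and to the switched form of $\Omega$, which replaces the offending ranges by sub-sums enjoying better sieve behaviour and lowers $\kappa_2$ and $\kappa_3$ enough that $\kappa:=\lambda_1-\kappa_2-\kappa_3>0$ after optimising $z$, $u$, the weight, and the sieve level. Checking $\kappa>0$ is then a finite numerical task, essentially Chen's, now carried out with explicit sieve functions. Since every $o(1)$ above is \emph{effective} --- this is precisely what Theorems \ref{upperthm} and \ref{lowerthm} provide --- the inequality $R(N)\geq(\kappa+o(1))\,C_N\,x/(\log x)^2$ can be solved for $N$, and a careful choice of parameters brings the threshold down to $\exp(\exp(32.7))$.

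The main obstacle will be bookkeeping rather than structural: one must track \emph{every} error term --- the sieve remainder sums, the contribution of the small moduli that classically invokes the ineffective Siegel--Walfisz theorem, and the errors from the switching and Buchstab steps (whose number grows with the depth of the iteration) --- through the whole argument with completely explicit constants, and the final threshold is acutely sensitive both to how tightly each is handled and to the parameter choices. The conceptual difficulty, which is the very reason the classical proof is ineffective, is the potential Siegel zero hidden in the Bombieri--Vinogradov input; Theorems \ref{upperthm} and \ref{lowerthm} are built to neutralise it, modifying the sieve weights so that the hypothetical Siegel-zero term is either absent or provably of lower order, at no cost to the constant $\kappa$.
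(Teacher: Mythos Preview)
The paper does not prove Theorem~\ref{BJVthm} at all: it is quoted verbatim from \cite{BJV2025} as a previously established result, and no proof is supplied here. The surrounding text even remarks that the techniques of the present paper are ``simpler and contain stronger bounds'' than those in \cite{BJV2025} and would ``likely yield better numerics'', i.e.\ the authors explicitly distinguish the method behind Theorem~\ref{BJVthm} from Theorems~\ref{upperthm} and~\ref{lowerthm}. So there is no ``paper's own proof'' to compare your proposal against.

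On the substance of your sketch: the outline of Chen's weighted sieve is broadly correct, but two things go wrong. First, the logic is anachronistic. You invoke Theorems~\ref{upperthm} and~\ref{lowerthm} of the present paper to derive a constant that was obtained in \cite{BJV2025} \emph{before} these theorems existed; the actual proof in \cite{BJV2025} handles the Siegel-zero issue by a more ad hoc (and, per the authors, more laborious) modification of the sifting sets, not by appealing to a general black-box of the form you cite. Second, and more seriously, nothing in your write-up pins down the specific threshold $\exp(\exp(32.7))$. Your proposal asserts that ``a careful choice of parameters brings the threshold down to $\exp(\exp(32.7))$'' without exhibiting any of the explicit constants in the sieve functions, the effective prime-number-theorem error terms, the Page/zero-free-region constants, or the Buchstab iteration depths that would be needed to justify that particular double exponential. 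Getting an \emph{effective} Chen's theorem is one thing; landing on the exact numerical value $32.7$ requires the full explicit computation carried out in \cite{BJV2025}, which your sketch does not reproduce or even summarise quantitatively.
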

To avoid the ineffectivity of the Bombieri--Vinogradov theorem, the proof of Theorem \ref{BJVthm} relates the sifting sets $\mcA$ and $\PP$ to similar sets which avoid the main complications involving Siegel zeros. However, since the proof of Theorem \ref{BJVthm} also involves a delicate sieve weighting procedure and is fully explicit, the core ideas of the argument are obscured by technical computations.

The aim of this paper is to generalise the techniques used to prove Theorem \ref{BJVthm}. Namely, we prove an effective sieve upper and lower bound that works for any sifting problem with a Bombieri--Vinogradov style error term, including every example (1)--(4) listed at the start of this subsection. Compared to the work in \cite{BJV2025}, our argument is simpler and contains stronger bounds. Our approach is thus likely to yield better numerics than those in \cite{BJV2025}, especially if applied to simpler sifting problems such as Theorem \ref{twingoldthm} i.e.\ examples (1) and (2) above.

\subsection{Notation and statement of main results}
To state our results, we first recall some standard notation used in sieve theory. So, consider a sifting problem with a finite set of integers $\mcA$ and an infinite set of primes $\PP$ such that no element of $\mcA$ is divisible by a prime in $\overline{\PP}=\{p\ \text{prime}:p\notin\PP\}$. For any square-free integer $d$ with\footnote{Here and throughout, by $(d,\overline{\mathcal{P}})=1$ we mean that $(d,p)=1$ for all $p\in\overline{\PP}$.} $(d,\overline{\mathcal{P}})=1$, let $X>1$ (independent of $d$) and $g(d)$ be a multiplicative function with
\begin{equation*}
    0<g(p)<1\ \text{for all}\ p\in\mathcal{P}
\end{equation*}
and 
\begin{equation}\label{vzdef}
    V(z)=\prod_{\substack{p<z\\p\in\PP}}\left(1-g(p)\right).
\end{equation}
We then let
\begin{equation}\label{Adrddef}
    \mathcal{A}_d:=\{a\in \mathcal{A}:d\mid a\}\quad\text{and}\quad r(d):=|\mathcal{A}_d|-g(d) X.
\end{equation}
Here, $X$ and $g(d)$ should be chosen so that
\begin{equation}\label{AdAasymeq}
    |\mathcal{A}_d|\sim g(d) X.
\end{equation}
We also denote the \emph{dimension} of the sifting problem as a constant $\kappa>0$ such that
\begin{equation}\label{omegacon}
    \prod_{\substack{z_1\leq p<z_2\\p\in\mathcal{P}}}\left(1-g(p)\right)^{-1}<\left(\frac{\log z_2}{\log z_1}\right)^{\kappa}\left\{1+\frac{L}{\log z_1}\right\}
\end{equation}
is satisfied for some effective constant $L>0$ and all $z_2>z_1\geq 2$. Provided $\kappa$ and $L$ are constant, one can prove generic sieve upper and lower bounds of the form
\begin{align}
    S(\mathcal{A},\mathcal{P},z)<XV(z)\left(F(s)+\varepsilon_1(X)\right)+R_D\label{sieveubgen},\\
    S(\mathcal{A},\mathcal{P},z)>XV(z)\left(f(s)+\varepsilon_2(X)\right)-R_D. \label{sievelbgen}
\end{align}
Here, $s=\log D/\log z\geq 1$, $F(s)-1>0$, $1-f(s)>0$, $F(s)$ is non-increasing, $f(s)$ is non-decreasing,  $\varepsilon_1(X)$ and $\varepsilon_2(X)$ are asymptotically decreasing towards $0$, and
\begin{equation}
    R_D=\sum_{\substack{d<D\\(d,\overline{\mathcal{P}})=1}}\mu^2(d)\gamma_0^{\omega(d)}|r(d)|\label{RDeq},
\end{equation} 
for some constant $\gamma_0>0$ with $\mu(d)$ denoting the M\"obius function and $\omega(d)$ denoting the number of distinct prime factors of $d$. Examples of such upper and lower bounds can be found throughout any standard text on sieve theory including \cite{halberstam1974sieve,greaves2013sieves} or \cite{friedlander2010opera} and are referred to as variants of ``the fundamental lemma of sieve theory". The main principle we wish to show is that if $r(d)$ is such that the remainder sum in \eqref{RDeq} is directly bounded by the Bombieri--Vinogradov theorem, then the bounds \eqref{sieveubgen} and \eqref{sievelbgen} can be modified to give a new remainder term $R_D$ of the same strength which avoids the ineffectivity due to potential Siegel zeros.

We begin with our effective upper bound result. Note that throughout we will often use Vinogradov's shorthand notation $f\ll g$ to mean $f=O(g)$, and $f\asymp g$ to mean that $f\ll g$ and $g\ll f$.

\begin{theorem}\label{upperthm}
    Suppose that, for fixed sieving dimension $\kappa>0$ one has a generic sieve upper bound of the form \eqref{sieveubgen} with a remainder term $R_D$ given by \eqref{RDeq} associated with a constant $\gamma_0>0$. Suppose then that we have a specific sifting problem $(\mcA,\PP)$ with constants $A\geq 1$, $\gamma_1>0$ such that for all $p\in\PP$, 
    \begin{equation}\label{paconds}
        g(p)\leq A/(p-1)
    \end{equation}
    and for some $\mathcal{X}\asymp X\log X$,
    \begin{equation}\label{rdbound}
        r(d)\ll \gamma_1^{\omega(d)}\max_{(a,d)=1}\left|\pi(\mathcal{X};d,a)-\frac{\pi(\mathcal{X})}{\varphi(d)}\right|+\sqrt{\mcX}\log\mcX
    \end{equation}
    with each implied constant effective. Let $\gamma=\gamma_0\gamma_1$. For a choice of $B>\gamma^2+1$, let
    \begin{equation}\label{Ddef}
        D=D(\mcX):=\frac{\sqrt{\mcX}}{(\log \mcX)^B}.
    \end{equation}
    Then, for any $s=\frac{\log D}{\log z}\geq 1$,
    \begin{equation}\label{sieveubnew}
        S(\mcA,\PP,z)<XV(z)\left(1+O_A\left(\frac{1}{\log\log X}\right)\right)\left(F(s)+\varepsilon_1(X)\right)+O_{B,\gamma}\left(\frac{X}{(\log X)^{B_\gamma-1}}\right)
    \end{equation}
    with both of the big-$O$ terms effective, and
     \begin{equation}\label{Bgamdef}
        B_{\gamma}=
        \begin{cases}
            B-2,&\text{if}\ 0<\gamma\leq 1,\\
            \frac{B-1-\gamma^2}{2},&\text{if}\ \gamma>1. 
        \end{cases}
    \end{equation}
\end{theorem}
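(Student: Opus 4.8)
The plan is to reduce the remainder $R_D$ to a weighted Bombieri--Vinogradov sum and then render that sum effective by isolating the at-most-one exceptional Siegel zero and, when it occurs, absorbing it into a cheap modification of the sieving set $\PP$. First I would insert \eqref{rdbound} into \eqref{RDeq} and collapse the two weights via $\gamma_0^{\omega(d)}\gamma_1^{\omega(d)}=\gamma^{\omega(d)}$ to obtain
\[
R_D\ll\Sigma+\sqrt{\mcX}\log\mcX\sum_{d<D}\mu^2(d)\gamma^{\omega(d)},\qquad\Sigma:=\sum_{\substack{d<D\\(d,\overline{\PP})=1}}\mu^2(d)\gamma^{\omega(d)}\max_{(a,d)=1}\left|\pi(\mcX;d,a)-\frac{\pi(\mcX)}{\varphi(d)}\right|.
\]
Since $\sum_{d<D}\mu^2(d)\gamma^{\omega(d)}\ll D(\log D)^{\gamma-1}$, the choice \eqref{Ddef} makes the second term $\ll\mcX(\log\mcX)^{\gamma-B}\asymp X(\log X)^{-(B-\gamma-1)}$; a short check (using $B>\gamma^2$ when $\gamma>1$) gives $B-\gamma-1\ge B_\gamma-1$, so this lies within the target error $O(X(\log X)^{-(B_\gamma-1)})$ and everything reduces to bounding $\Sigma$ effectively.

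For $\Sigma$ I would run the usual proof of \eqref{bomvineq} with exponent $B+1$ (admissible at level $\sqrt{\mcX}/(\log\mcX)^B$ by Sedunova \cite{sedunova2019logarithmic}) while extracting the single possible exceptional zero via Page's theorem \cite{page1935number}. This gives, \emph{effectively}, a bound on $\Sigma$ of the required strength apart from the contribution of those $d$ that are multiples of the modulus $q_1$ of a primitive real character $\chi_1$ with $L(\beta,\chi_1)=0$, $\beta>1-c_0/\log\mcX$: when $0<\gamma\le1$ this is $\Sigma\ll\mcX(\log\mcX)^{-(B+1)}$, and when $\gamma>1$ it is $\Sigma\ll\mcX(\log\mcX)^{-(B+1-\gamma^2)/2}$ after first peeling off the weight $\gamma^{\omega(d)}$ by Cauchy--Schwarz against the trivial bound $\max_{(a,d)=1}|\pi(\mcX;d,a)-\pi(\mcX)/\varphi(d)|\ll\mcX/\varphi(d)$ (which costs a factor $(\log\mcX)^{\gamma^2/2}$). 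Substituting $\mcX\asymp X\log X$ turns both into $O(X(\log X)^{-(B_\gamma-1)})$; if no such $\chi_1$ exists we obtain \eqref{sieveubnew} with main-term factor $1$, so I would assume it exists.

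To dispose of $q_1$ I would split on its size. If $q_1\le(\log X)^{3/2}$ --- which, because $\chi_1$ real forces $q_1$ to be essentially a fundamental discriminant, in particular covers every $(\log\log X)$-smooth $q_1$, since then $q_1\le 8\prod_{p<\log\log X}p\le(\log X)^{1+o(1)}$ --- then Page's lower bound $1-\beta\gg q_1^{-1/2}(\log q_1)^{-2}$ forces $\mcX^\beta\ll\mcX\exp(-c(\log\mcX)^{\delta})$ for some $\delta>0$, so the $q_1\mid d$ part of $\Sigma$ is negligible and we finish with factor $1$. Otherwise $q_1$ is not $(\log\log X)$-smooth and so has a prime factor $\ell\ge\log\log X$; if $\ell\in\overline{\PP}$ then no $d$ with $q_1\mid d$ is counted in $\Sigma$ and the exceptional part vanishes, and if $\ell\in\PP$ I would replace $\PP$ by $\PP':=\PP\setminus\{\ell\}$. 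As removing a sieving prime only enlarges the sifted set, $S(\mcA,\PP,z)\le S(\mcA,\PP',z)$, and \eqref{sieveubgen} applied to $(\mcA,\PP')$ replaces $V(z)$ by $V(z)(1-g(\ell))^{-1}=V(z)\bigl(1+O_A(1/\log\log X)\bigr)$ (by \eqref{paconds} and $\ell\ge\log\log X$; the factor is simply $1$ when $\ell\ge z$) and restricts the remainder sum to $\ell\nmid d$, hence to $q_1\nmid d$, killing the exceptional contribution so that the estimate above applies. Combining the cases yields \eqref{sieveubnew} with all implied constants effective.

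The crux is this last case. I would need to confirm that $(\mcA,\PP')$ still meets every hypothesis required to invoke \eqref{sieveubgen} --- in particular that the condition ``no element of $\mcA$ is divisible by a prime of $\overline{\PP'}$'' is either unnecessary for an \emph{upper}-bound sieve, or else can be restored by passing to $\{a\in\mcA:\ell\nmid a\}$ and separately handling the sub-family $\mcA_\ell$, whose sieve main term $g(\ell)XV(z)(F(s)+\varepsilon_1)$ is already $O_A(1/\log\log X)$-small --- and I would need to pin down the $q_1$-thresholds so that the ``small $q_1$'' and ``$q_1$ has a prime factor $\ge\log\log X$'' regimes overlap. Everything else is routine: Mertens-type sums over squarefree $d$ with weight $\gamma^{\omega(d)}$, and the classical effective prime number theorem for arithmetic progressions once the exceptional character has been removed. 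It is precisely this $\PP$-modification, in place of Siegel's ineffective theorem, that makes the whole estimate effective.
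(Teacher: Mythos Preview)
Your approach is correct and essentially identical to the paper's: the paper also splits on whether the exceptional modulus (there denoted $k_1$) is small ($k_1\le\log\mcX$, handled via the effective bound $1-\beta\gg k_1^{-1/2}$) or large, and in the large case removes from $\PP$ the largest prime factor $q_1$ of $k_1$ --- shown to satisfy $q_1\gg\log\log\mcX$ by exactly the primorial/smoothness argument you sketch --- so that the remainder sum is restricted to $q_1\nmid d$, hence $k_1\nmid d$, and the effective Bombieri--Vinogradov estimate (with Cauchy--Schwarz when $\gamma>1$) applies. Your worry about the hypotheses for $(\mcA,\PP')$ is harmless and the paper simply writes down \eqref{sieveubgen} for $(\mcA,\PP_1)$ without comment: the standing assumption that no element of $\mcA$ is divisible by a prime of $\overline{\PP}$ is only used to interpret $S(\mcA,\PP,z)$, not to derive the inequality, so the generic upper bound applies to $(\mcA,\PP_1)$ with the remainder sum automatically restricted to $(d,\overline{\PP_1})=1$.
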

So, from Theorem \ref{upperthm}, if one has a Bombieri--Vinogradov style sieve remainder term, then it is possible to get an effective upper bound for $S(\mcA,\PP,z)$ with little loss. In particular, as a function of $X$, the leading term in \eqref{sieveubnew} is asymptotically the same as in \eqref{sieveubgen}. Then, in the standard case $\gamma=1$, the $O_{B,\gamma}(\cdot)$ error term in \eqref{sieveubnew} is the same as what one would get by directly applying the Bombieri--Vinogradov theorem to $R_D$ in \eqref{sieveubgen}. With more work, it is likely that $B_\gamma$ could be improved to $B-1-\gamma$ for all $\gamma>0$, not just $\gamma=1$. Nevertheless, $\gamma=1$ is standard for many applications so we just use a simple but wasteful argument involving the Cauchy--Schwarz inequality for other values of $\gamma$ (see Proposition \ref{mainPNTAPprop}).

The proof of Theorem \ref{upperthm} is given in Section \ref{uppersect}, after obtaining preliminary results on primes in arithmetic progressions in Section \ref{sect2}. The proof of the application Theorem \ref{twingoldthm} then follows routinely from Theorem \ref{upperthm}. However, we still provide the main details of this proof in Section \ref{appsect}, thereby giving a clear example of how to apply Theorem~\ref{upperthm}.

We now move on to an effective sieve lower bound. Here, the conditions required are slightly more restrictive. However, these conditions are still easily satisfied by most sifting problems of interest.

\begin{theorem}\label{lowerthm}
    Suppose that one has a generic sieve upper bound \eqref{sieveubgen} and lower bound \eqref{sievelbgen} such that \eqref{omegacon} is satisfied for a fixed $\kappa,L>0$, with a remainder term $R_D$ given by \eqref{RDeq} associated with a constant $\gamma_0>0$. Suppose then that we have a specific sifting problem $(\mcA,\PP)$ with $X=|\mcA|$ and such that the condition \eqref{paconds} holds for some constant $A\geq 1$. We further suppose that for some ${\mcX\asymp {X\log X}}$ and a constant $\gamma_1>0$,
    \begin{align}\label{sievelbmain2}
        |A_{k d}|-g(d)|A_{k}|\ll\gamma_1^{\omega(d)}\sup_{y\leq \mcX}\max_{(a,k d)=1}\left|\pi(y;k d,a)-\frac{\pi(y;k,a)}{\varphi(d)}\right|+\sqrt{\mcX}\log\mcX
    \end{align}
    for all square-free $k,d>0$ with $(k,d)=1$ and $(k d,\overline{\PP})=1$. Let $\gamma=\gamma_0\gamma_1$. For some choice of $B>\gamma^2+1$, let
    \begin{equation}\label{Ddef2}
        D=D(\mcX):=\frac{\sqrt{\mcX}}{(\log \mcX)^B}.
    \end{equation}
    Then, for any $s=\frac{\log D}{\log z}>1$, we have 
    \begin{align}\label{sievelbnew}
        S(\mcA,\PP,z)&>XV(z)\left(1+O_{A,\kappa,L,B,\gamma,s}\left(\frac{1}{\log\log X}\right)\right)\left(f(s-\delta)-\varepsilon_2(X)\right)\\
        &\qquad\qquad\qquad\qquad\quad+O_{B,\gamma}\left(\frac{X}{(\log X)^{B_\gamma-1}}\frac{\log\log X}{\log\log\log X}\right)
    \end{align}
    with $\delta=o_{s,B}(1)$, every implied constant effective, and $B_{\gamma}$ is as in~\eqref{Bgamdef}.
\end{theorem}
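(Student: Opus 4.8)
The plan is to follow the proof of Theorem~\ref{upperthm} in Section~\ref{uppersect}: reduce the statement to an effective bound for the weighted remainder sum and feed it into the generic lower bound \eqref{sievelbgen}, the new difficulty being that the modified sifting set will have a cardinality that is slightly distorted by the exceptional zero. I would start from the effective Page-type dichotomy furnished by Section~\ref{sect2} (Proposition~\ref{mainPNTAPprop}): either $\prod_{q\le(\log\mcX)^{C}}\prod_{\chi\bmod q}L(s,\chi)$, with $C=C(B,\gamma)$, has no real zero in a fixed neighbourhood of $s=1$, in which case an effective Siegel--Walfisz bound holds for all small moduli and, together with the large sieve, gives $\sum_{d<D}\mu^{2}(d)\gamma^{\omega(d)}\sup_{y\le\mcX}\max_{(a,d)=1}|\pi(y;d,a)-\pi(y)/\varphi(d)|\ll_{B,\gamma}\mcX/(\log\mcX)^{B_{\gamma}}$; or there is a single exceptional modulus $r_{1}\le(\log\mcX)^{C}$, which by \eqref{paconds} (so $2\notin\PP$) we may take odd and squarefree, with the corresponding zero obeying Page's effective bound $1-\beta_{1}\gg r_{1}^{-1/2}(\log r_{1})^{-2}$. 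In the zero-free case, \eqref{sievelbmain2} with $k=1$ and \eqref{sievelbgen} immediately give \eqref{sievelbnew} with $\delta=0$ and without the extra factor, so I would concentrate on the exceptional case.

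Since the exceptional defect enters $\pi(y;d,a)-\pi(y)/\varphi(d)$ only for $r_{1}\mid d$, if some prime of $r_{1}$ lies outside $\PP\cap[2,z)$ then $r_{1}\nmid d$ throughout the sieve remainder and the argument is unchanged. Otherwise every prime of $r_{1}$ lies in $\PP\cap[2,z)$, so $r_{1}\mid P(z)$, and I would pass to $\mcA^{\ast}=\{a\in\mcA:(a,r_{1})=1\}$ and $\PP^{\ast}=\PP\setminus\{p:p\mid r_{1}\}$, with each such $p$ moved into $\overline{\PP^{\ast}}$. Because every $p\mid r_{1}$ already divides $P(z)$ one has $S(\mcA,\PP,z)=S(\mcA^{\ast},\PP^{\ast},z)$ exactly, and the condition \eqref{omegacon} persists for $\PP^{\ast}$ with the same $\kappa,L$ since only small primes were removed. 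Applying \eqref{sievelbgen} to $(\mcA^{\ast},\PP^{\ast})$ at the slightly reduced level $D^{\ast}=D/r_{1}\ge D/(\log\mcX)^{C}$ produces the main term with $f$ evaluated at $s^{\ast}=\log D^{\ast}/\log z=s-\delta$, where $\delta\ll_{C}\log\log\mcX/\log z=o_{s,B}(1)$ by \eqref{Ddef2}; this is precisely why the hypothesis demands $s>1$, so that $s-\delta\ge1$ once $X$ is large.

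The core estimate is the bound on $R^{\ast}_{D^{\ast}}=\sum_{d<D^{\ast}}\mu^{2}(d)\gamma_{0}^{\omega(d)}|r^{\ast}(d)|$ with $r^{\ast}(d)=|\mcA^{\ast}_{d}|-g(d)|\mcA^{\ast}|$. For $(d,r_{1})=1$, inclusion--exclusion over divisors of the squarefree $r_{1}$ gives $r^{\ast}(d)=\sum_{e\mid r_{1}}\mu(e)\bigl(|\mcA_{de}|-g(d)|\mcA_{e}|\bigr)$, and each summand is of the exact shape bounded by \eqref{sievelbmain2} with parameter $k=e$. Since $(d,r_{1})=1$ the divisibility $r_{1}\mid de$ occurs only for $e=r_{1}$: for every other $e$ the modulus $de$ is coprime to $r_{1}$ and the term obeys an effective Siegel--Walfisz bound, while for $e=r_{1}$ the exceptional contributions of $\pi(y;dr_{1},a)$ and of $\pi(y;r_{1},a)/\varphi(d)$ are both equal to $-\chi_{1}(a)\li(y^{\beta_{1}})/\varphi(dr_{1})$ and cancel in the difference --- this is exactly why the two-variable hypothesis \eqref{sievelbmain2}, rather than the one-variable \eqref{rdbound}, is needed. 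Carrying out the removal one prime of $r_{1}$ at a time rather than expanding the full $2^{\omega(r_{1})}$-term sum, each of the $\omega(r_{1})$ stages costs $\ll_{B,\gamma}\mcX/(\log\mcX)^{B_{\gamma}}$ after using $\sum_{d<D^{\ast}}\mu^{2}(d)\gamma^{\omega(d)}/\varphi(d)\asymp(\log D^{\ast})^{\gamma}$ and Proposition~\ref{mainPNTAPprop}; since $\omega(r_{1})\le(1+o(1))\log r_{1}/\log\log r_{1}\ll\log\log\mcX/\log\log\log\mcX\asymp\log\log X/\log\log\log X$, this accounts for the extra factor in \eqref{sievelbnew}.

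It remains to verify that $|\mcA^{\ast}|V^{\ast}(z)=\bigl(\prod_{p\mid r_{1}}(1-g(p))\bigr)^{-1}|\mcA^{\ast}|\,V(z)$ equals $\bigl(1+O(1/\log\log X)\bigr)XV(z)$, which is where the lower bound is genuinely harder than Theorem~\ref{upperthm}, since there one never forms $\mcA^{\ast}$. Inclusion--exclusion gives $|\mcA^{\ast}|=X\prod_{p\mid r_{1}}(1-g(p))+O\bigl(\li(\mcX^{\beta_{1}})/\varphi(r_{1})\bigr)$ up to an effective error term, the $e=r_{1}$ term supplying the exceptional contribution; dividing by $\prod_{p\mid r_{1}}(1-g(p))\gg(\log\log\mcX)^{-A}$ (from $g(p)\le A/(p-1)$ and Mertens), the resulting relative error is $o(1/\log\log X)$, because for $r_{1}\le(\log\mcX)^{3/2}$ Page's bound forces $\li(\mcX^{\beta_{1}})\ll_{K}\mcX(\log\mcX)^{-K}$ for every $K$, while for larger $r_{1}$ one has $\varphi(r_{1})\gg(\log\mcX)^{3/2}/\log\log\mcX$. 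Combining this with the losses above yields the stated main term $XV(z)\bigl(1+O_{A,\kappa,L,B,\gamma,s}(1/\log\log X)\bigr)(f(s-\delta)-\varepsilon_{2}(X))$. As the dichotomy, the bound on $r_{1}$ and on $1-\beta_{1}$, the Siegel--Walfisz and large-sieve inputs, and the sieve functions $F,f,\varepsilon_{i}$ are all effective, so are the final implied constants; the main obstacle throughout is precisely the bookkeeping just described for the distorted cardinality $|\mcA^{\ast}|$ together with the matching exceptional-term cancellation inside $R^{\ast}_{D^{\ast}}$.
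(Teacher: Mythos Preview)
Your approach is genuinely different from the paper's, and in some ways cleaner. You pass once to $(\mcA^*,\PP^*)$ with $\mcA^*=\{a\in\mcA:(a,r_1)=1\}$, apply the lower bound sieve \eqref{sievelbgen} a single time, and then control the distorted cardinality $|\mcA^*|$ and the single remainder $R^*_{D^*}$ by M\"obius expansion over divisors of $r_1$. This never invokes the upper bound \eqref{sieveubgen}, and your main-term identity $|\mcA^*|V^*(z)=XV(z)+V^*(z)\sum_{e\mid r_1}\mu(e)r(e)$ is more direct than the paper's combination of Lemmas~\ref{Vinductlem} and~\ref{v1vlem}. By contrast, the paper works at the level of the sifting function via the identity of Lemma~\ref{exinclem},
\[
S(\mcA,\PP,z)=\sum_{j=0}^{\ell-1}(-1)^j S(\mcA_{m_j},\PP_{j+1},z)+(-1)^{\ell} S(\mcA_{m_{\ell}},\PP_\ell,z),
\]
and bounds each of these $\omega(k_1)+1$ terms by whichever of \eqref{sieveubgen}, \eqref{sievelbgen} matches the sign of $(-1)^j$; this is why both sieve inequalities appear among the hypotheses.

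The one real gap in your sketch is the assertion that $R^*_{D^*}$ can be handled in $\omega(r_1)$ ``stages''. Your M\"obius expansion $r^*(d)=\sum_{e\mid r_1}\mu(e)\bigl(|\mcA_{de}|-g(d)|\mcA_e|\bigr)$ has $2^{\omega(r_1)}$ summands, and peeling primes off one at a time does not collapse this: if $\mcA^{(j)}=\{a\in\mcA:(a,q_1\cdots q_j)=1\}$ then the $j$-th increment $r^{(j)}(d)-r^{(j-1)}(d)=-\bigl(|\mcA^{(j-1)}_{dq_j}|-g(d)|\mcA^{(j-1)}_{q_j}|\bigr)$ involves $\mcA^{(j-1)}$ rather than $\mcA$, and so is not of the shape \eqref{sievelbmain2} without a further M\"obius expansion that reintroduces $2^{j-1}$ terms. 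Consequently your remainder bound is really $2^{\omega(r_1)}\cdot\mcX/(\log\mcX)^{B_\gamma}$, i.e.\ an extra factor of size $(\log\mcX)^{o(1)}$ rather than the $\log\log X/\log\log\log X$ in \eqref{sievelbnew}. The paper secures the sharper factor precisely because Lemma~\ref{exinclem} is a \emph{linear} decomposition with only $\omega(k_1)+1$ terms, each of whose remainders is a single instance of \eqref{sievelbmain2} with $k=m_j$ and is controlled by Proposition~\ref{pimidkprop}; the price is the alternating signs, which force the use of both sieve inequalities.
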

Thus, provided $f(s)$ is continuous, we also have an effective sieve lower bound with no loss to the asymptotic form of \eqref{sievelbgen}. The proof of Theorem \ref{lowerthm} is given in Section \ref{lowersect} and is much more involved than the proof Theorem \ref{upperthm}, requiring a careful inclusion-exclusion argument. Notably, Theorem \ref{lowerthm} requires both a generic upper and lower bound to prove the effective lower bound \eqref{sievelbnew}. The remainder term in \eqref{sievelbnew} is also slightly weaker than that in \eqref{sieveubnew}.

A non-trivial combination of Theorem \ref{upperthm} and Theorem \ref{lowerthm} can be used to prove an effective version of Chen's theorem, as in Theorem \ref{BJVthm}. However, to demonstrate a simpler application of Theorem \ref{lowerthm}, we prove the following effective result in Section \ref{appsect}.

\begin{theorem}\label{p2thm}
    There exists a computable constant $N$ such that all $n>N$ with $n\equiv 0,2$ (mod 6) can be represented as
    \begin{equation*}
        N=q^2+\eta,
    \end{equation*}
    where $q$ is a prime and $\eta$ has at most $17$ prime factors.
\end{theorem}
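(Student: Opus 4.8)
The plan is to derive Theorem \ref{p2thm} as a lower-bound sifting result, applying Theorem \ref{lowerthm} to a suitably chosen pair $(\mcA,\PP)$. The natural setup is the following: fix a large even $n$ with $n\equiv 0,2\pmod 6$, and let
\begin{equation*}
    \mcA=\{n-q^2:q\leq\sqrt{n-2}\text{ prime}\},
\end{equation*}
with $\PP$ the set of all primes $p$ with $p\nmid n$ and (say) $p>3$; the excluded primes $\overline\PP$ are $2,3$ together with the prime divisors of $n$, none of which divides any $a\in\mcA$ by the congruence conditions on $n$ (these conditions guarantee $n-q^2\not\equiv 0\pmod{2},\pmod 3$ for $q$ coprime to $6$). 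Then $S(\mcA,\PP,z)$ with $z=(n)^{1/(s+1)}$ or similar counts (up to small-prime contributions) those $q$ for which $n-q^2$ has all prime factors $\geq z$, hence at most $\log n/\log z=\,$O$(1)$ prime factors. Choosing $z$ so that this count is $\leq 17$ --- which, by the prime number theorem for $q$, means $s=\log D/\log z$ with $D\asymp\sqrt{n}/(\log n)^B$ --- one needs $f(s-\delta)>0$, i.e.\ $s-\delta>2$ for the linear ($\kappa=1$) lower-bound sieve where $f(s)>0$ for $s>2$. The constant $17$ then emerges from the arithmetic: $s$ must be chosen $>2$, $z=n^{1/(2s+\text{const})}$, and $\lfloor\log n/\log z\rfloor$ prime factors on $\eta=n-q^2$ after accounting for the sieving level and the $q$-sized pieces; tracking these bookkeeping inequalities gives $17$.

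**First I would** verify the hypotheses of Theorem \ref{lowerthm}. The dimension is $\kappa=1$ with some effective $L$: this is the standard linear sieve density estimate $\prod_{z_1\le p<z_2}(1-g(p))^{-1}$, where here $g(p)=\nu(p)/p$ with $\nu(p)$ the number of residues $r\pmod p$ with $n-r^2\equiv 0$, so $\nu(p)\in\{0,1,2\}$ and $g(p)\leq 2/(p-1)$ for $p>3$, giving condition \eqref{paconds} with $A=2$; the $\Omega(1,L)$ bound follows from Mertens-type estimates together with control on $\sum_{p}(\chi(n)_p)$ via the relevant character sum, which is effective. The critical input \eqref{sievelbmain2} is the arithmetic-progressions bound: writing $|\mcA_{kd}|$ in terms of primes $q$ lying in certain residue classes modulo $kd$, one has $|\mcA_{kd}|-g(d)|\mcA_k|$ expressed through $\pi(y;kd,a)-\pi(y;k,a)/\varphi(d)$-type differences after a substitution $q^2\equiv\cdot$, which is permissible since $(kd,\overline\PP)=1$ forces $(kd,6n)=1$ so squaring is a bijection on residues; this yields \eqref{sievelbmain2} with $\gamma_1$ the number of square roots, i.e.\ $\gamma_1=2$, and $\mcX\asymp n\asymp X\log X$ (since $X=|\mcA|=\pi(\sqrt{n-2})\asymp\sqrt n/\log n$, whence $X\log X\asymp\sqrt n\asymp\sqrt{\mcX}$ --- one should take $\mcX\asymp n$). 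Here $\gamma_0$ is whatever constant the chosen linear lower-bound sieve carries (e.g.\ $\gamma_0=1$ for the Rosser--Iwaniec sieve), so $\gamma=\gamma_0\gamma_1=2$, and one picks $B>\max\{3,\gamma^2\}=4$.

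**Then** plugging into \eqref{sievelbnew} gives, for $s>2+\delta$,
\begin{equation*}
    S(\mcA,\PP,z)\geq XV(z)\,f(s-\delta)\bigl(1+o(1)\bigr)+\text{O}\!\left(\frac{X}{(\log X)^{B_\gamma-1}}\frac{\log\log X}{\log\log\log X}\right)
\end{equation*}
with $B_\gamma=(B-4)/2$, which is positive and $\to\infty$ as $X\to\infty$ once $B>6$, and $XV(z)\asymp X/\log z\asymp\sqrt n/(\log n)^{2}\cdot(\text{const})$ dominates the error term since the error is $X/(\log X)^{B_\gamma-1}\cdot(\cdots)=\sqrt n/(\log n)^{1+B_\gamma-1}\cdot(\cdots)$, smaller than $\sqrt n/(\log n)^2$ when $B_\gamma>2$ up to the $\log\log$ factor. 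Thus $S(\mcA,\PP,z)>0$ for all $n\geq N$ with $N$ effectively computable, so some $q$ has $n-q^2$ free of prime factors $<z$; and $n-q^2<n$ has at most $\log n/\log z<s+1<$ (a bound yielding $17$ after optimizing $s$ just above $2$, with $z$ chosen from $s$ and $D\asymp\sqrt n/(\log n)^B$) prime factors. **The main obstacle** I anticipate is the bookkeeping to pin down the numeral $17$: one must optimize the trade-off between taking $s$ close to $2$ (to make $z$ as large as possible, minimizing the factor count $\log n/\log z$) and keeping $f(s-\delta)>0$ with enough room for the $\delta=o_{s,B}(1)$ slack and for $XV(z)f(s-\delta)$ to beat the error term, and then carefully count: $n=q^2\cdot(\text{nothing})+\eta$, $\eta$ has all prime factors $\geq z=D^{1/s}=n^{1/(2s)}/(\log n)^{B/s}$, so $\omega(\eta)\le\log n/\log z= 2s+o(1)$, forcing $2s<18$, i.e.\ $s<9$, which is compatible with $s>2$ --- the "$17$" is $\lfloor 2s\rfloor$ for $s$ chosen in $(8,9)$, or possibly smaller if one is more careful, but the stated value $17$ should drop out of taking $s$ slightly below $9$ to retain positivity with the weaker lower-bound remainder; reconciling this with the requirement $f(s-\delta)>0$ (only $s>2$) while simultaneously needing the error term negligible (needing $B$, hence $s$ relative to $B$, large enough) is where the actual constant is determined, and getting $17$ rather than something larger requires the sharp linear sieve function $f$ and a careful choice of all parameters.
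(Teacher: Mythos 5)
Your general strategy is right: apply Theorem \ref{lowerthm} to $\mcA = \{n - q^2\}$ sifted by primes $p > 3$ coprime to $n$, and deduce $S(\mcA,\PP,z) > 0$ for $z$ a sufficiently large power of $n$. But the core of your argument goes wrong in a way that matters for the problem at hand.

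\textbf{The dimension is $\kappa = 2$, not $\kappa = 1$, and this choice is forced by effectivity.} You observe $g(p) \leq 2/(p-1)$ but then claim the sieve is linear, invoking control on the character sum $\sum_p (n/p)/p$ to average the $2$ down to a $1$. For a fixed $n$, that average is indeed about $1/p$ and would give $\kappa = 1$, but only with a constant $L$ that depends on $n$ in a way governed by the size of a possible Siegel zero of the Dirichlet $L$-function $L(s,(n/\cdot))$ --- which is exactly the source of ineffectivity this whole paper is engineered to avoid. The paper's proof sidesteps this by using the crude pointwise bound $g(p) \leq 2/(p-1)$ to satisfy $\Omega(\kappa,L)$ with $\kappa = 2$, and then appeals to the Diamond--Halberstam--Richert two-dimensional sieve, whose lower-bound function $f_2(s)$ becomes positive only for $s \geq 4.267$. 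This is why the final exponent is roughly $4 \cdot 4.267 \approx 17.07$, not something one can push near $4$.

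\textbf{Your parameter count is also off, and the derivation of the constant $17$ doesn't cohere.} Since $X = |\mcA| \asymp \pi(\sqrt{n}) \asymp \sqrt{n}/\log n$, the requirement $\mcX \asymp X\log X$ gives $\mcX \asymp \sqrt{n}$, not $\mcX \asymp n$ as you write (your own line ``$X\log X \asymp \sqrt{n} \asymp \sqrt{\mcX}$'' contains the error). Hence $D \asymp \mcX^{1/2}/(\log \mcX)^B \asymp n^{1/4}/(\log n)^B$, so $z = D^{1/s} \asymp n^{1/(4s)}$ and $\eta < n$ has at most $\lfloor 4s \rfloor$ prime factors. With $\kappa = 2$, the DHR sieve forces $s \geq 4.267$, giving $4s \geq 17.07$, and the floor is $17$. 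Your explanation (``$\lfloor 2s \rfloor$ with $s \in (8,9)$'') is inconsistent with your own claimed $\kappa = 1$ setup, in which $s$ could be taken just above $2$, yielding far fewer prime factors than the theorem states --- a sign that the reasoning wasn't actually closing. Finally, your choice $\gamma_0 = 1$ (Rosser--Iwaniec linear) must be replaced: the DHR two-dimensional sieve carries $\gamma_0 = 8$ in its remainder, so $\gamma = 16$ and $B > \gamma^2 = 256$, leading to the paper's $B = 265$; with a two-dimensional Rosser--Iwaniec sieve one could take $\gamma = 2$, $B = 14$ at the cost of a worse constant, but not $\kappa = 1$.
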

We have chosen this application as an illustrative example since its proof requires a 2-dimensional sieve and consequently has a very different setup to the proof of Theorem \ref{twingoldthm}. Using a sieve-weighting procedure, the value of $17$ in Theorem \ref{p2thm} can be reduced further (see \cite[Theorems 1.5 and 1.6]{johnston2025sum}), but we do not do so here for brevity. 

\section{Results on primes in arithmetic progressions}\label{sect2}

In this section, we develop some results on primes in arithmetic progressions. In particular, we will prove effective variants of the Bombieri--Vinogradov theorem needed for Theorems \ref{upperthm} and \ref{lowerthm}. Note that throughout, all of the implied constants in our results are effectively computable.

\subsection{Setup and notation}\label{setupnotsub}

To begin with, we first recall some basic facts about real zeros of Dirichlet $L$-functions and set-up our corresponding notation. Here, for a Dirichlet character $\chi$, we write $L(s,\chi)$ for the associated $L$-function given by
\begin{equation*}
    L(s,\chi)=\sum_{n=1}^\infty\frac{\chi(n)}{n^s},\qquad\text{for $\Re(s)>1$},
\end{equation*}
and extended to a meromorphic function on the whole complex plane by analytic continuation. For some modulus $q$, we also write 
\begin{equation*}
    \mathcal{L}_q(s)=\prod_{\chi}L(s,\chi),
\end{equation*}
where the product is taken over all Dirichlet characters mod $q$.

For any fixed $q$, the function $\mathcal{L}_q(s)$ can have at most $1$ ``bad" real zero which is close enough to the $1$-line as to significantly worsen estimates on the number of primes in an arithmetic progression mod $q$. This is made precise by the following standard result. 

\begin{lemma}[{see \cite[Corollary 11.8]{montgomery2007multiplicative} and \cite{morrill2020elementary}}]\label{siegelzerolem}
    There exists an effectively computable constant $R_1>0$ such that $\mathcal{L}_q(s)$ has at most one real zero in the region
    \begin{equation*}
        \Re(s)>1-\frac{R_1}{\log q}.
    \end{equation*}
    If such a zero exists then the associated character $\chi$ is quadratic.
\end{lemma}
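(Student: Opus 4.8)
The plan is to reproduce the classical Landau--Page argument for exceptional zeros, checking along the way that every constant produced is explicit so that $R_1$ comes out effectively computable. First I would dispose of the easy characters. No $L(s,\chi)$ vanishes for $\Re(s)\geq 1$: the Euler product handles $\Re(s)>1$, while $L(1+it,\chi)\neq 0$ for non-principal $\chi$ is classical (the delicate case $L(1,\chi)\neq 0$ for real $\chi$ being part of Dirichlet's theorem). Moreover the principal character contributes only $\zeta(s)\prod_{p\mid q}(1-p^{-s})$, and since $\zeta$ has no real zero in $(0,1)$ this has no real zero near $s=1$. Hence it suffices to bound real zeros $\beta$ of non-principal $\chi$ lying in $\bigl(1-\tfrac{R_1}{\log q},\,1\bigr)$, to show at most one such $\beta$ can occur, and to show its character must be real.

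For a \emph{complex} character $\chi$ --- so $\chi^2\neq\chi_0$ and $L(s,\chi^2)$ is regular at $s=1$ --- I would run the familiar identity $3+4\cos\theta+\cos 2\theta=2(1+\cos\theta)^2\geq 0$ at the point $\sigma=1+\tfrac{\delta}{\log q}$ for a small absolute $\delta>0$. This gives $-3\tfrac{\zeta'}{\zeta}(\sigma)-4\Re\tfrac{L'}{L}(\sigma,\chi)-\Re\tfrac{L'}{L}(\sigma,\chi^2)\geq 0$; combining it with $-\tfrac{\zeta'}{\zeta}(\sigma)\leq\tfrac{1}{\sigma-1}+O(1)$, the Hadamard partial-fraction bound $-\Re\tfrac{L'}{L}(\sigma,\psi)\leq\tfrac12\log q+O(1)$ for non-principal $\psi$ (obtained by discarding the non-negative zero terms), and retaining only the subtracted term $\tfrac{1}{\sigma-\beta}$ coming from the hypothetical real zero, one arrives at $0\leq\tfrac{3}{\sigma-1}-\tfrac{4}{\sigma-\beta}+O(\log q)$. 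Writing $\beta=1-\tfrac{\lambda}{\log q}$ this becomes $\tfrac{3}{\delta}-\tfrac{4}{\delta+\lambda}\geq -O\!\bigl(\tfrac{1}{\log q}\bigr)$, which fails once $\lambda<\lambda_0(\delta)$ and $q$ exceeds an explicit threshold; choosing $\delta$ and then $R_1$ accordingly excludes every such $\beta$, so no complex character has a real zero in the region.

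For \emph{real} non-principal characters I would use the non-negativity of Dirichlet coefficients instead. Given two \emph{distinct} real non-principal $\chi_1,\chi_2\pmod q$ (so $\chi_1\chi_2\neq\chi_0$), the product $F(s)=\zeta(s)L(s,\chi_1)L(s,\chi_2)L(s,\chi_1\chi_2)$ has $\log F(s)=\sum_{p^k}\tfrac{(1+\chi_1(p^k))(1+\chi_2(p^k))}{k\,p^{ks}}$, whose coefficients are $\geq 0$ since $\chi_i(p^k)\in\{-1,0,1\}$; hence $-\tfrac{F'}{F}(\sigma)\geq 0$ for $\sigma>1$. Expanding the left side as above, keeping the pole of $\zeta$ and the two terms $\tfrac{1}{\sigma-\beta_i}$ from hypothetical real zeros $\beta_i$ of $L(s,\chi_i)$, gives $0\leq\tfrac{1}{\sigma-1}-\tfrac{1}{\sigma-\beta_1}-\tfrac{1}{\sigma-\beta_2}+O(\log q)$, which is impossible at $\sigma=1+\tfrac{\delta}{\log q}$ for small $\delta$ and large $q$. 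The identical computation with $F(s)=\zeta(s)L(s,\chi)$ rules out two distinct real zeros of a single $L(s,\chi)$. Combining the three cases, $\mathcal{L}_q$ has at most one real zero in the region, and it can only be a zero of a quadratic character.

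The $3$--$4$--$1$ and non-negativity manipulations are routine; the real work --- and the only genuine obstacle --- is effectivity, namely pinning down the $O(1)$ in $-\zeta'/\zeta(\sigma)\leq\tfrac{1}{\sigma-1}+O(1)$ and, more delicately, the $O(\log q)$ in the partial-fraction bound for $L'/L$ (which needs an explicit Hadamard-product / functional-equation estimate, plus an explicit treatment of the Euler factors at $p\mid q$ when $\chi$ is imprimitive), together with the trivial small-$q$ range. This is exactly the bookkeeping carried out in \cite{morrill2020elementary}; alternatively one may simply quote \cite[Corollary 11.8]{montgomery2007multiplicative}, whose proof is already effective for this Landau--Page portion of the theory --- it is only the \emph{Siegel} lower bound $\beta<1-c(\varepsilon)q^{-\varepsilon}$ that is ineffective, and that is not needed here.
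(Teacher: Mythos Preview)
Your argument is the standard Landau--Page proof and is correct; the paper itself does not supply a proof of this lemma at all, but simply cites \cite[Corollary~11.8]{montgomery2007multiplicative} and \cite{morrill2020elementary} for the result. Your sketch is precisely the argument found in those references, so there is nothing to compare --- you have reconstructed what the paper defers to the literature.
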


\begin{definition}
    For any modulus $q\geq 2$, if the zero in Lemma \ref{siegelzerolem} exists then it is called the \emph{Siegel zero} mod $q$.
\end{definition}

Another classical result related to Lemma~\ref{siegelzerolem}, is that amongst all moduli $q$ up to some height $Q$, there is at most one ``really bad" Siegel zero. This is useful for our purposes since the Bombieri--Vinogradov theorem is an averaging statment over a range of moduli.

\begin{lemma}[{see \cite[Lemma~8]{page1935number} or \cite[Theorem~2]{mccurley1984explicit}}]\label{exceptionalzerolem}
    For any $Q\geq 2$, the function
    \begin{equation}\label{primprodeq}
        \prod_{q\leq Q}{\prod_{\chi(q)}}^* L(s,\chi)
    \end{equation}
    has at most one real zero $\beta$ in the region
    \begin{equation*}
        \beta>1-\frac{R_3}{\log Q}
    \end{equation*}
    where the product $\prod_{\chi}^*$ is over all primitive characters mod $q$, and $R_3\leq R_1$ is an effectively computable constant (independent of $Q$) with $R_1$ as in Lemma \ref{siegelzerolem}. 
\end{lemma}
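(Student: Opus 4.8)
The plan is to argue by contradiction, following the classical Landau--Page argument. Suppose the product in \eqref{primprodeq} had two distinct real zeros $\beta_1,\beta_2$ in the region $\beta>1-R_3/\log Q$, say with $\beta_i$ a zero of $L(s,\psi_i)$ for a primitive character $\psi_i$ modulo $q_i\leq Q$. First I would reduce to a clean configuration: since $R_3\leq R_1$ and $q_i\leq Q$, each $\beta_i$ lies in the region of Lemma~\ref{siegelzerolem} attached to the modulus $q_i$, so $\psi_i$ must be quadratic, and that lemma moreover forces $\beta_1=\beta_2$ whenever $q_1=q_2$. Hence I may assume $q_1\neq q_2$, so that $\psi_1\neq\psi_2$, both are non-principal (as $\zeta(s)$ has no real zero near $1$), and $\psi_1\psi_2$ is a non-principal real character of modulus dividing $q_1q_2\leq Q^2$.

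The key step is to package the two exceptional zeros into a single Dirichlet series with non-negative coefficients. I would work with
\begin{equation*}
    F(s):=\zeta(s)L(s,\psi_1)L(s,\psi_2)L(s,\psi_1\psi_2),
\end{equation*}
and check, prime by prime (using $\psi_i(p)\in\{-1,0,1\}$), that $F(s)=\sum_{n\geq1}a_nn^{-s}$ with every $a_n\geq0$ and $a_1=1$; equivalently,
\begin{equation*}
    -\frac{F'}{F}(s)=\sum_{n\geq1}\Lambda(n)\bigl(1+\psi_1(n)\bigr)\bigl(1+\psi_2(n)\bigr)n^{-s},
\end{equation*}
so in particular $-\frac{F'}{F}(\sigma)\geq0$ for all real $\sigma>1$. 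Note that $F$ is holomorphic for $\Re(s)>0$ apart from the simple pole of $\zeta$ at $s=1$, and that $F(\beta_1)=F(\beta_2)=0$.

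Next I would fix $\sigma=1+\delta/\log Q$ for a small absolute constant $\delta$, and for each of the four factors of $F$ insert the standard consequences of the Hadamard factorisation (see \cite[Ch.~11]{montgomery2007multiplicative}): $-\frac{\zeta'}{\zeta}(\sigma)=\frac{1}{\sigma-1}+O(1)$ and $-\frac{L'}{L}(\sigma,\chi)\leq C\log(q_\chi+2)-\frac{1}{\sigma-\beta_\chi}$, where $\beta_\chi$ is the exceptional real zero of $L(s,\chi)$ (the remaining zeros contribute non-negatively and are discarded, and the finitely many extra Euler factors coming from an imprimitive $\psi_1\psi_2$ are absorbed into the $O(\log Q)$ error). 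Summing and comparing with $-\frac{F'}{F}(\sigma)\geq0$ yields
\begin{equation*}
    \frac{1}{\sigma-\beta_1}+\frac{1}{\sigma-\beta_2}\leq\frac{1}{\sigma-1}+C_1\log Q
\end{equation*}
for an absolute constant $C_1$. Since $\beta_i>1-R_3/\log Q$ forces $\sigma-\beta_i<(\delta+R_3)/\log Q$ while $\sigma-1=\delta/\log Q$, this reduces to $\frac{2}{\delta+R_3}\leq\frac{1}{\delta}+C_1$; choosing $\delta$ to be a sufficiently small absolute constant (depending only on $C_1$) and then $R_3\leq\delta/2$ makes the left-hand side strictly exceed the right-hand side, a contradiction. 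Taking $R_3=\min\{R_1,\delta/2\}$ then does the job.

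The main obstacle, were one to prove this from scratch, lies in the two classical inputs just used: the reduction to quadratic characters and the uniform-in-$q$ bound for $-\frac{L'}{L}(\sigma,\chi)$ near $\Re(s)=1$. Here the former is exactly Lemma~\ref{siegelzerolem} and the latter is available from \cite[Ch.~11]{montgomery2007multiplicative}, so the remaining work is routine; the only genuinely fiddly points are the non-negativity of the $a_n$ at primes dividing $q_1q_2$ and the bookkeeping for an imprimitive product $\psi_1\psi_2$, and an explicit admissible $R_3$ honouring $R_3\leq R_1$ can be extracted following \cite[Theorem~2]{mccurley1984explicit}.
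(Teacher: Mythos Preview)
The paper does not give its own proof of this lemma: it is stated as a citation to \cite[Corollary 11.10]{montgomery2007multiplicative} and \cite[Theorem 2]{mccurley1984explicit}, with no argument supplied. Your proposal is a correct reconstruction of the classical Landau--Page argument that underlies those references, so there is nothing to compare against beyond noting that you have reproduced the standard proof the paper is invoking.

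One small point of phrasing: you open by assuming ``two distinct real zeros $\beta_1,\beta_2$'', but the statement as written (and as used later in the paper, where one speaks of \emph{the} exceptional modulus) really excludes a repeated zero of the product as well, i.e.\ two distinct primitive characters $\psi_1\neq\psi_2$ sharing a common real zero $\beta$. Your $F(s)$ argument already covers that case verbatim (the term $\frac{1}{\sigma-\beta_1}+\frac{1}{\sigma-\beta_2}$ becomes $\frac{2}{\sigma-\beta}$), so this is only a matter of wording the hypothesis of the contradiction as ``two real zeros of the product counted with multiplicity'' rather than ``two distinct real zeros''.
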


\begin{definition}
    For some choice of $Q\geq 2$, if the zero in Lemma \ref{exceptionalzerolem} exists then we call the zero as well as its corresponding modulus and character \emph{exceptional}.
\end{definition}

The final preliminary result that we require is an effective bound showing that Siegel or exceptional zeros cannot be ``too close" to the 1-line.

\begin{lemma}[{See \cite{goldfeld1975siegel} and \cite[Theorem 5.4.6]{razakarinoro2024siegel}}]\label{siegelboundlem}
    Let $q\geq 2$. If it exists, let $\beta$ denote the Siegel zero mod $q$ associated to the Dirichlet character $\chi$. Then, there exists an effectively computable constant $R_2>0$ (independent of $q$) such that
    \begin{equation}\label{pageeq}
        \beta<1-\frac{R_2}{\sqrt{q}}.
    \end{equation}
\end{lemma}
\begin{remark}
    Lemma~\ref{siegelboundlem} is a strengthening of Page's theorem~\cite[Theorem II]{page1935number}, which gives $1-\beta=O(1/\sqrt{q}(\log q)^2)$. Although Page's weaker bound would be sufficient for our purposes, the log-free bound~\eqref{pageeq} is neater and simpler to work with. 
\end{remark}

Now, with a view to proving Theorems \ref{upperthm} and \ref{lowerthm}, we fix $\gamma>0$, $B>\gamma^2+1$, let $D(\mcX)=\sqrt{\mcX}/(\log\mcX)^B$ and define
\begin{equation}\label{y0Q1def}
    y_0=y_0(\mcX):=\frac{\mcX}{(\log\mcX)^{B}}\quad\text{and}\quad Q_1:=Q_1(\mcX)=(\log y_0)^{B}.
\end{equation}
Next, we let $k_0=k_0(\mcX)$ denote the exceptional zero up to $Q_1(\mcX)$ and
\begin{equation}
\label{eq:k1}
    k_1=k_1(\mcX):=
    \begin{cases}
        k_0,&\text{if $k_0$ exists, is square-free and $(k_0,\overline{\PP})=1$},\\
        0,&\text{otherwise.}
    \end{cases}
\end{equation}
Provided $k_1\neq 0$, we then let $q_1>\ldots>q_{\ell}$ be the prime divisors of $k_1$ and set
\begin{equation}\label{mjdef}
    m_j:=q_1\cdots q_j.
\end{equation}
Finally, for an infinite set of primes $\PP$ and multiplicative function $g(d)$, we set
\begin{equation}\label{jdefs}
    \mathcal{P}_j:=\mathcal{P}\setminus\{q_1,\ldots, q_j\}\quad\text{and}\quad V_j(z):=\prod_{\substack{p<z\\p\in\mathcal{P}_j}}\left(1-g(p)\right)
\end{equation}
with $V(z)=V_0(z)$ as in \eqref{vzdef}.

\subsection{Bounds relating to $\psi(x;d,a)$}

To obtain bounds relating to $\pi(x;d,a)$, we first work with the Chebyshev $\psi$ function, then subsequently apply partial summation. We let 
\begin{equation*}
    \Lambda(n):=
    \begin{cases}
        \log p,&\text{if $n=p^m$ for some prime $p$ and $m\geq 1$},\\
        0,&\text{otherwise.}
    \end{cases}
\end{equation*}
and
\begin{equation*}
    \psi(x;d,a):=\sum_{\substack{n\leq x\\ n\equiv a\ \text{(mod}\ d)}}\!\Lambda(n).
\end{equation*}
Our goal will be to obtain effective, averaged results for the error
\begin{equation}\label{psierr}
    \left|\psi(x;d,a)-\frac{\psi(x)}{\varphi(d)}\right|
\end{equation}
with $\psi(x)=\psi(x;1,0)$.

We begin with an averaged bound for the error \eqref{psierr} over moduli $Q_1< d\leq D$, which corrects a result of Sedunova~\cite{sedunova2019logarithmic}. As with Akbary and Hambrook's result (Theorem \ref{akhamthm}), this result is effective as it avoids the complications due to the Siegel zero at small moduli.

\begin{lemma}[{\cite[Corollary 1.4]{sedunova2019logarithmic}}]\label{sedunovalem}
    One has
    \begin{equation}\label{sedunovaeq}
        \sum_{Q_1< d\leq D}\sup_{y\leq \mcX}\max_{\substack{(a,d)=1}}\left|\psi(y;d,a)-\frac{\psi(y)}{\varphi(d)}\right|\ll_{B}\frac{\mcX}{(\log \mcX)^{B-3}},
    \end{equation}
    where the implied constant is effective.
\end{lemma}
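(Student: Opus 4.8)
The statement is, in essence, a reformulation of \cite[Corollary 1.4]{sedunova2019logarithmic}, so the plan is to deduce it directly from that result after a short notational translation. Recall that Sedunova's corollary provides an \emph{effective} averaged estimate for $|\psi(y;d,a)-\psi(y)/\varphi(d)|$, summed over the relevant range of moduli with the small moduli removed; excising these small moduli is exactly what allows one to bypass the (ineffective) Siegel--Walfisz theorem, at the cost of replacing the power $(\log x)^{B+1}$ that is available ineffectively by the slightly weaker $(\log x)^{B-2}$.

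First I would take $x=\mcX$ throughout Sedunova's statement. Then I would check that the range of moduli lines up. In our notation $Q_1=(\log y_0)^B$ with $y_0=\mcX/(\log\mcX)^B$, so $\log y_0=\log\mcX+O(\log\log\mcX)$, hence $\log\log y_0=\log\log\mcX+O(1)$ and therefore $Q_1=(\log y_0)^B\asymp(\log\mcX)^B$. Thus the lower cutoff $Q_1$ matches the small-moduli threshold appearing in Sedunova's corollary up to a bounded multiplicative factor, which can be absorbed into the parameter $B$ at the expense of the ($B$-dependent, effective) implied constant; the upper cutoff $D=\sqrt{\mcX}/(\log\mcX)^B$ is already in the required shape, and the inner $\sup_{2\le y\le\mcX}$ and $\max_{(a,d)=1}$ carry over verbatim. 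With $\psi(\mcX)=\psi(\mcX;1,0)$, the right-hand side $\mcX/(\log\mcX)^{B-2}$ of \eqref{sedunovaeq} is then exactly what Sedunova's estimate yields.

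I do not anticipate a genuine obstacle here, since the analytic content is entirely Sedunova's; the only points requiring attention are (i) making the passage from her normalisation to the range $[Q_1,D]$ and the parameter $B$ used here correctly — in particular verifying that, if her exclusion of small moduli is phrased via the least prime factor as in Theorem \ref{akhamthm} rather than via the size of $d$, the two formulations match over the range in question — and (ii) confirming that every implied constant arising from the translation remains effectively computable, which it does, as no Siegel--Walfisz estimate is reintroduced at any stage and the only ingredients are Sedunova's effective corollary together with the elementary size comparisons above.
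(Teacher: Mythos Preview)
Your approach matches the paper's: the lemma is simply a citation of Sedunova's result, and the ``proof'' is a matter of translation. However, there is one subtlety you miss. The paper itself notes (in the remark following the lemma) that the \emph{statement} of \cite[Corollary 1.4]{sedunova2019logarithmic} is actually weaker than \eqref{sedunovaeq}; the inequality \eqref{sedunovaeq} is what is established in Sedunova's \emph{proof} of that corollary, not what is recorded in its statement. So you cannot quite ``deduce it directly from that result after a short notational translation'' as you propose --- you must instead go into Sedunova's argument and extract the stronger bound that is proved there. Apart from this point, your check that $Q_1\asymp(\log\mcX)^B$ and your caveats about least-prime-factor versus size-of-$d$ formulations are on target, and effectivity is indeed preserved throughout.
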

\begin{proof}
    The proof of the lemma is essentially the same as that of~\cite[Corollary~1.4]{sedunova2019logarithmic}. However, we outline how to correct a small error in this result, which ultimately gives the denominator $(\log\mcX)^{B-3}$ in~\eqref{sedunovaeq} as opposed to $(\log\mcX)^{B-2}$.

    The key tool required is Vaughan's inequality, in which Sedunova~\cite[Proposition~1.3]{sedunova2019logarithmic} is able to prove in the form
    \begin{equation}\label{vaughaneq}
        \sum_{q\leq Q}\frac{q}{\varphi(q)}{\sum_{\chi(q)}}^*\sup_{y\leq x}|\psi(y,\chi)|\ll_{\varepsilon}(x+Q^2x^{1/2}+Qx^{13/14+\varepsilon})(\log x)^2
    \end{equation}
    for any $Q,\varepsilon>0$. Here, for a Dirichlet character $\chi$
    \begin{equation*}
        \psi(y,\chi):=\sum_{n\leq y}\Lambda(n)\chi(y)
    \end{equation*}
    and the starred sum in~\eqref{vaughaneq} indicates that one is only summing over primitive characters $\chi$ mod $q$. Arguing as in~\cite[\S 3]{sedunova2019logarithmic}, one then has
    \begin{align*}
        &\sum_{Q_1< d\leq D}\sup_{y\leq \mcX}\max_{\substack{(a,d)=1}}\left|\psi(y;d,a)-\frac{\psi(y)}{\varphi(d)}\right|\\
        &\qquad\qquad\qquad\qquad\ll\log \mcX\sum_{Q_1< d\leq D}\frac{1}{\varphi(d)}{\sum_{\chi(q)}}^*\sup_{2\leq y\leq \mcX}|\psi(y,\chi)|+D(\log\mcX)^3.
    \end{align*}
    Next, one uses partial summation to get that
    \begin{equation*}
        \sum_{Q_1< d\leq D}\frac{1}{\varphi(d)}{\sum_{\chi(q)}}^*\sup_{2\leq y\leq \mcX}|\psi(y,\chi)|=\frac{1}{D}\sum_{q\leq D}h(q)-\frac{1}{Q_1}\sum_{q\leq Q_1}h(q)+\int_{Q_1}^D\left(\sum_{q\leq t}h(q)\right)\frac{\mathrm{d}t}{t^2},
    \end{equation*}
    where
    \begin{equation*}
        h(q):=\frac{q}{\varphi(q)}{\sum_{\chi(q)}}^*\sup_{y\leq \mcX}|\psi(y,\chi)|.
    \end{equation*}
    Applying~\eqref{vaughaneq} thus gives
    \begin{equation*}
        \sum_{Q_1< d\leq D}\sup_{y\leq \mcX}\max_{\substack{(a,d)=1}}\left|\psi(y;d,a)-\frac{\psi(y)}{\varphi(d)}\right|\ll_\varepsilon\left(\frac{\mcX}{Q_1}+D\mcX^{1/2}+\mcX^{13/14+\varepsilon}\log\left(\frac{D}{Q_1}\right)\right)(\log \mcX)^3,
    \end{equation*}
    from which~\eqref{sedunovaeq} follows.
\end{proof}

To extend Lemma \ref{sedunovalem} to small moduli, we require the following pointwise bound which depends on the existence of a potential Siegel zero.

\begin{lemma}[See e.g.~{\cite[Lemma 6.10]{bennett2018explicit} or \cite[Theorem 1.2]{bordignon2021medium}}]\label{PNTAPlem}
    Let $d\leq Q_1$ and $\beta_d$ denote the Siegel zero modulo $d$ if it exists. Then,
    \begin{equation}\label{PNTAPeq}
        \sup_{y\leq\mcX}\max_{(a,d)=1}\left|\psi(y;d,a)-\frac{\psi(y)}{\varphi(d)}\right|\ll_B\frac{1}{\varphi(d)}\cdot
        \begin{cases}
            \frac{\mcX}{(\log \mcX)^{B}}+\mcX^{\beta_d},&\text{if}\ \beta_d\ \text{exists},\\
            \frac{\mcX}{(\log \mcX)^{B}},& \text{otherwise.}
        \end{cases}
    \end{equation}
    where the implied constant is effective and independent of $a$.
\end{lemma}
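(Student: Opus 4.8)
The plan is to deduce the pointwise bound from the standard explicit formula for $\psi(y;d,a)$ in terms of zeros of Dirichlet $L$-functions, combined with the zero-free region and zero-density information encoded in Lemmas \ref{siegelzerolem}, \ref{siegelboundlem}, and \ref{exceptionalzerolem}. First I would write, via orthogonality of characters,
\begin{equation*}
    \psi(y;d,a)-\frac{\psi(y)}{\varphi(d)}=\frac{1}{\varphi(d)}\sum_{\chi\neq\chi_0}\overline{\chi(a)}\,\psi(y,\chi),
\end{equation*}
where $\psi(y,\chi)=\sum_{n\leq y}\chi(n)\Lambda(n)$, and then apply the truncated explicit formula for each $\psi(y,\chi)$ (as in \cite[Ch.~19--20]{montgomery2007multiplicative}), reducing each induced-by-primitive contribution to a sum over zeros $\rho=\beta+i\gamma$ of the corresponding primitive $L$-function of bounded conductor $\leq d\leq Q_1$, plus acceptable error terms of size $\ll y(\log(dy))^2/T$ once we take $T$ a suitable power of $\log y$.

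The key input is that, because $d\leq Q_1=(\log y_0)^B$ is a fixed power of $\log$, the classical (Siegel-free) zero-free region for $\mathcal{L}_d(s)$ — explicit versions of which are available in the literature — guarantees that \emph{all} nontrivial zeros except possibly a single real Siegel zero $\beta_d$ (the one in Lemma \ref{siegelzerolem}, necessarily attached to a quadratic character) satisfy $\Re(\rho)\leq 1-c/\log(d(|\gamma|+2))$ for an effective $c>0$; crucially, for $d\leq Q_1$ this denominator is $\ll\log\log y\cdot\log(|\gamma|+2)$, so after summing $y^{\rho}$ over these zeros with the standard zero-counting bound $N(T,\chi)\ll T\log(dT)$, the total contribution is $\ll_B y\exp(-c'\sqrt{\log y})$, which is certainly $\ll_B y/(\log y)^B$. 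This is where the restriction $d\leq Q_1$ is essential: it keeps $\log d$ negligible against $\log y$ so that no Siegel--Walfisz-type ineffectivity enters. The only surviving term is $-y^{\beta_d}/\beta_d$ from the Siegel zero when it exists, giving exactly the two cases in \eqref{PNTAPeq}; multiplying through by $\varphi(d)$ yields the stated form, and independence from $a$ is clear since we bounded $|\overline{\chi(a)}|\leq1$.

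I expect the main obstacle to be bookkeeping rather than conceptual: one must use an \emph{explicit} effective zero-free region valid uniformly for all characters of modulus up to $Q_1$ (not just for a fixed modulus), track how the implied constants depend on $B$ through the choice of truncation height $T=(\log y)^{B+3}$ or similar, and verify that the prime-power and small-$n$ error terms in the explicit formula are dominated. A secondary point is handling imprimitive characters: each $\chi\bmod d$ is induced by a primitive $\chi^*$ of conductor $d^*\mid d$, and $\psi(y,\chi)=\psi(y,\chi^*)+O(\log d\log y)$ from removing the Euler factors at primes dividing $d$, an error that is trivially absorbed. Since both cited sources (\cite[Lemma 6.10]{bennett2018explicit} and \cite[Theorem 1.2]{bordignon2021medium}) already carry out exactly this computation with fully explicit constants, in the write-up I would simply invoke one of them at the appropriate level of generality and note that the parameter ranges $d\leq Q_1$, $y_0\leq y\leq\mcX$ chosen in \eqref{y0Q1def} are comfortably within their hypotheses.
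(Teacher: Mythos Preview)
Your proposal is correct and aligns with the paper's treatment: the paper does not give a proof of this lemma at all, but simply cites it from \cite[Lemma~6.10]{bennett2018explicit} and \cite[Theorem~1.2]{bordignon2021medium}, exactly as you suggest doing at the end of your plan. Your sketch of the explicit-formula argument (orthogonality, truncated explicit formula, effective zero-free region with the possible Siegel zero isolated, imprimitive-to-primitive reduction) is the standard route those references take, and your identification of the key point---that $d\le Q_1=(\log y_0)^B$ keeps $\log d\ll_B\log\log y$ so the non-exceptional zeros contribute $\ll_B y\exp(-c'\sqrt{\log y})$---is precisely why the bound is effective here.
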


Throughout we will also make regular use of the following bound for the partial sums of $\mu^2(d)\gamma^{\omega(d)}/\varphi(d)$, which unsurprisingly appears when bounding a sieve remainder term of the form \eqref{RDeq}.

\begin{lemma}\label{muphilem}
    For all $\gamma>0$, we have
    \begin{equation*}
        \sum_{d\leq x}\frac{\mu^2(d)\gamma^{\omega(d)}}{\varphi(d)}\ll_{\gamma}(\log x)^{\gamma}
    \end{equation*}
    where the implied constant is effective.
\end{lemma}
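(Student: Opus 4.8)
The plan is to prove Lemma \ref{muphilem} by a standard Euler-product / Rankin-type argument, keeping explicit track of the dependence on $\gamma$ so that the implied constant is effective. First I would note that the Dirichlet series $\sum_{d\geq 1} \mu^2(d)\gamma^{\omega(d)}/\varphi(d) \cdot d^{-s}$ factors as an Euler product $\prod_p \bigl(1 + \gamma/((p-1)p^s)\bigr)$, so morally the sum behaves like $(\log x)^\gamma$ because $\sum_{p\leq x} \gamma/(p-1) \sim \gamma \log\log x$ and exponentiating gives $(\log x)^\gamma$. To turn this heuristic into a rigorous upper bound I would proceed directly rather than via contour integration.

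\medskip

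The cleanest route is induction on the number of prime factors, or equivalently a direct manipulation of the Euler product. Concretely, I would write
\begin{equation*}
    \sum_{d\leq x}\frac{\mu^2(d)\gamma^{\omega(d)}}{\varphi(d)} \leq \prod_{p\leq x}\left(1+\frac{\gamma}{p-1}\right),
\end{equation*}
since every squarefree $d\leq x$ has all prime factors $\leq x$ and expanding the product on the right produces each term $\gamma^{\omega(d)}/\varphi(d)$ (using multiplicativity of $\varphi$ on the squarefree $d$) along with extra nonnegative terms. Then I would take logarithms and use $\log(1+t)\leq t$ to get
\begin{equation*}
    \log\prod_{p\leq x}\left(1+\frac{\gamma}{p-1}\right) \leq \gamma\sum_{p\leq x}\frac{1}{p-1} = \gamma\sum_{p\leq x}\frac{1}{p} + \gamma\sum_{p\leq x}\frac{1}{p(p-1)}.
\end{equation*}
The second sum converges (to an absolute constant $\leq 1$), and by Mertens' theorem $\sum_{p\leq x} 1/p = \log\log x + M + O(1/\log x)$ with an effective error term; hence the logarithm is at most $\gamma\log\log x + C\gamma$ for an effective absolute constant $C$, which exponentiates to $e^{C\gamma}(\log x)^\gamma$. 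This gives the claimed bound with implied constant $e^{C\gamma}$, visibly effective and depending only on $\gamma$.

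\medskip

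One technical wrinkle to address: for the inequality $\log(1+t)\le t$ and Mertens to be applied cleanly I should assume $x$ is at least some absolute constant (say $x\geq 3$), and absorb small $x$ into the implied constant — this is harmless since the statement is an asymptotic $\ll_\gamma$ bound. A second minor point is that $\varphi(p)=p-1$ only for prime $p$, so the factorization of the Euler product genuinely requires $d$ squarefree, which is exactly the support cut out by $\mu^2(d)$; I would remark on this explicitly. I do not expect any real obstacle here — the only thing to be careful about is that every estimate used (Mertens with effective error, the convergence of $\sum 1/(p(p-1))$) is itself effective, so that the final constant $e^{C\gamma}$ is genuinely computable; this is standard. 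An alternative, if one wants to avoid quoting Mertens, is a Rankin trick: bound $\sum_{d\leq x}$ by $x^{\sigma}\sum_{d\geq 1} \mu^2(d)\gamma^{\omega(d)}/(\varphi(d)d^{\sigma})$ for a suitable $\sigma = \sigma(x)\to 0$ like $\sigma = 1/\log x$, but this ends up needing essentially the same Euler-product estimate, so I would stick with the direct argument above.
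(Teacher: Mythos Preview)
Your proof is correct and follows essentially the same approach as the paper: bound the sum by the Euler product $\prod_{p\leq x}(1+\gamma/(p-1))$, reduce to $\gamma\sum_{p\leq x}1/p$ plus a convergent piece, and apply Mertens' theorem. The only cosmetic difference is that the paper factors the product as $\prod(1+\gamma/p)\prod(1+\gamma/((p+\gamma)(p-1)))$ before taking logarithms, whereas you take the logarithm first and then split $1/(p-1)=1/p+1/(p(p-1))$; the two routes are equivalent.
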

\begin{proof}
    We have
    \begin{equation*}
       \sum_{d\leq x}\frac{\mu^2(d)\gamma^{\omega(d)}}{\varphi(d)}\leq\prod_{p\leq x}\left(1+\frac{\gamma}{p-1}\right)=\prod_{p\leq x}\left(1+\frac{\gamma}{p}\right)\prod_{p\leq x}\left(1+\frac{\gamma}{(p+\gamma)(p-1)}\right).
    \end{equation*}
    To finish, we note that
    \begin{equation*}
        \prod_{p\leq x}\left(1+\frac{\gamma}{(p+\gamma)(p-1)}\right)=O_{\gamma}(1)
    \end{equation*}
    and
    \begin{align}\label{logheq}
        \prod_{p\leq x}\left(1+\frac{\gamma}{p}\right)=\exp\left(\sum_{p\leq x}\log\left(1+\frac{\gamma}{p}\right)\right)&\ll\exp\left(\sum_{p\leq x}\frac{\gamma}{p}+O_{\gamma}(1)\right)\notag\\
        &\ll_\gamma(\log x)^{\gamma},
    \end{align}
    where in the last step we used Mertens' theorem.
\end{proof}

Equipped with these preliminary lemmas, we now prove a complete equidistribution result in the case where the exceptional modulus is small. Notably, if the exceptional zero is small enough then we can readily apply Lemma \ref{siegelboundlem} to bound the contribution of the Siegel zero in \eqref{PNTAPeq}.

\begin{proposition}\label{smallpsiprop}
    Suppose that $k_1$, defined in \eqref{eq:k1}, satisfies $k_1\leq\log\mcX$. Then,
    \begin{equation}\label{bomvinpsieq}
        \sum_{\substack{1\leq d\leq D\\(d,\overline{\PP})=1}}\mu^2(d)\sup_{y\leq \mcX}\max_{(a,d)=1}\left|\psi(y;d,a)-\frac{\psi(y)}{\varphi(d)}\right|\ll_B\frac{\mcX}{(\log\mcX)^{B-3}}
    \end{equation}
    where the implied constant is effective.
\end{proposition}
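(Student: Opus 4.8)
The plan is to split the sum over $d$ at the threshold $Q_1=(\log y_0)^B$ from \eqref{y0Q1def}, and to treat the small-modulus range $1\le d\le Q_1$ by further splitting the supremum over $y$ at $y_0=\mcX/(\log\mcX)^B$. For the outer range $Q_1< d\le D$ I would simply invoke Lemma \ref{sedunovalem}: inserting the factor $\mu^2(d)\le 1$ and the restriction $(d,\overline{\PP})=1$ only shrinks the sum, so this part is already $\ll_B \mcX/(\log\mcX)^{B-2}$, which is exactly the claimed size. All the real work is thus in the range $1\le d\le Q_1$, where Sedunova's bound is unavailable and pointwise information must be used; note that for $\mcX$ large one has $Q_1\ll(\log\mcX)^B\ll D$, so the split makes sense.

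For $d\le Q_1$ and $2\le y\le y_0$, I would bound $|\psi(y;d,a)-\psi(y)/\varphi(d)|\le \psi(y;d,a)+\psi(y)/\varphi(d)$ and apply the Brun--Titchmarsh bound of Lemma \ref{Bruntitlem} together with Chebyshev's estimate $\psi(y)\ll y$. When $y\ge d^2$ we have $\log(y/d)\ge\tfrac12\log y$, so Brun--Titchmarsh gives $\psi(y;d,a)\ll y/\varphi(d)+\sqrt y\log y\ll y_0/\varphi(d)+\sqrt{y_0}\log y_0$; when $y<d^2$ the trivial bound $\psi(y;d,a)+\psi(y)/\varphi(d)\le 2\psi(y)\ll d^2\le Q_1^2$ suffices. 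Hence $\sup_{y\le y_0}|\psi(y;d,a)-\psi(y)/\varphi(d)|\ll y_0/\varphi(d)+\sqrt{y_0}\log y_0+Q_1^2$, and summing over squarefree $d\le Q_1$, using Lemma \ref{muphilem} with $\gamma=1$ to get $\sum_{d\le Q_1}\mu^2(d)/\varphi(d)\ll\log Q_1\ll\log\log\mcX$, this contributes
\[
\ll y_0\log\log\mcX + Q_1\sqrt{y_0}\log y_0 + Q_1^3 \ll \frac{\mcX\log\log\mcX}{(\log\mcX)^{B}} + \sqrt\mcX(\log\mcX)^{B+1}\ll_B \frac{\mcX}{(\log\mcX)^{B-1}}
\]
for $\mcX$ large.

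For $d\le Q_1$ and $y_0\le y\le\mcX$ I would apply Lemma \ref{PNTAPlem}. Its main term contributes $\ll \mcX/(\varphi(d)(\log y_0)^B)\ll \mcX/(\varphi(d)(\log\mcX)^B)$ since $\log y_0\sim\log\mcX$, which is harmless after summing against $\sum_{d\le Q_1}\mu^2(d)/\varphi(d)\ll\log\log\mcX$. The delicate part is the Siegel-zero term $y^{\beta_d}\le\mcX^{\beta_d}$, present only for those $d$ with a Siegel zero. If $\beta_d\le 1-R_3/\log Q_1$, then $\mcX^{\beta_d}\le\mcX\exp(-R_3\log\mcX/\log Q_1)$, and since $\log Q_1=B\log\log y_0$ this is smaller than any fixed power of $\log\mcX$ once $\mcX$ is large, in particular $\ll\mcX/(\log\mcX)^B$. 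If instead $\beta_d>1-R_3/\log Q_1$, then (using $R_3\le R_1$ and $\log d\le\log Q_1$) Lemma \ref{exceptionalzerolem} forces $\beta_d$ to be the exceptional zero modulo $k_0$ with $k_0\mid d$; since $d$ is squarefree with $(d,\overline{\PP})=1$, so is $k_0$, whence $k_1=k_0$ by \eqref{eq:k1} and the hypothesis gives $k_0\le\log\mcX$. Then Lemma \ref{siegelboundlem} yields $\beta_d<1-R_2/\sqrt{k_0}\le 1-R_2/\sqrt{\log\mcX}$, so $\mcX^{\beta_d}\le\mcX\exp(-R_2\sqrt{\log\mcX})\ll\mcX/(\log\mcX)^B$ for $\mcX$ large. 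Either way the Siegel-zero term is $\ll\mcX/(\log\mcX)^B$, so this range contributes $\ll_B \mcX\log\log\mcX/(\log\mcX)^{B}\ll_B \mcX/(\log\mcX)^{B-1}$.

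Adding the three contributions gives \eqref{bomvinpsieq}, and all implied constants are effective since $R_1,R_2,R_3,B$ and all quantities entering the estimates are. The main obstacle is the Siegel-zero bookkeeping in the last step: one has to correctly connect the hypothesis, which constrains only the squarefree, $\overline{\PP}$-coprime exceptional modulus $k_1$, to a genuine lower bound for $1-\beta_d$ for every relevant $d\le Q_1$ via Lemmas \ref{siegelzerolem}, \ref{exceptionalzerolem} and \ref{siegelboundlem}, and then verify that $\mcX^{\beta_d}$ indeed beats every power of $\log\mcX$; the Brun--Titchmarsh estimate for small $y$ and the appeal to Sedunova for large $d$ are routine by comparison.
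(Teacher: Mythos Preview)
Your proposal is correct and follows essentially the same strategy as the paper: split off $Q_1<d\le D$ via Lemma~\ref{sedunovalem}, handle small $y$ by trivial/Brun--Titchmarsh bounds, and for $y_0\le y\le\mcX$ apply Lemma~\ref{PNTAPlem} together with the Siegel-zero bookkeeping through Lemmas~\ref{exceptionalzerolem} and~\ref{siegelboundlem}. The only differences are cosmetic: the paper uses a three-way split of $y$ at $\sqrt{\mcX}$ and $y_0$ (rather than your two-way split with a sub-split at $y=d^2$), and it organises the Siegel-zero case analysis by the three possibilities for $k_1$ rather than by your cleaner dichotomy on whether $\beta_d$ exceeds $1-R_3/\log Q_1$.
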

\begin{proof}
    We only need to consider the sum over $1\leq d\leq Q_1$ since the range ${Q_1< d\leq D}$ is covered by Lemma \ref{sedunovalem}.
    
    First, suppose that there does not exist a Siegel zero mod $d$ for all $d\leq Q_1$, then the desired result follows by applying Lemma \ref{PNTAPlem} pointwise for $d\leq Q_1$, followed by Lemma \ref{muphilem}. On the other hand, if there exists a Siegel zero $\beta_d$ induced by a character $\chi_d$ mod $d$, then there are three subcases to consider:
    \begin{enumerate}[label=(\alph*)]
        \item Suppose $k_1=0$. Recall the definition \eqref{eq:k1} of $k_1$. If $k_1=0$ then either $k_0$ does not exist, $k_0$ is not square-free or $(k_0,\overline{\mathcal{P}})>1$. Since our sum in \eqref{bomvinpsieq} only counts square-free $d$ with $(d,\overline{\mathcal{P}})=1$, it follows that $\beta_d$ is not induced by an exceptional character up to $Q_1$. As a result, Lemma \ref{exceptionalzerolem} gives
        \begin{equation*}
            1-\beta_d>\frac{R_3}{\log Q_1}\gg_B \frac{1}{\log\log \mcX}.
        \end{equation*}
        \item Suppose $k_1\neq 0$ and $k_1\nmid d$. Then, $\beta_d$ cannot be induced by the exceptional character mod $k_1$ and again we have
        \begin{equation*}
            1-\beta_d>\frac{R_3}{\log Q_1}\gg_B \frac{1}{\log\log \mcX}.
        \end{equation*}
        \item Finally suppose $k_1\mid d$. If $\chi_d$ is induced by the exceptional character mod $k_1$, Lemma \ref{siegelboundlem} gives
        \begin{equation*}
            1-\beta_d>\frac{R_2}{\sqrt{k_1}}\gg\frac{1}{\sqrt{\log \mcX}}
        \end{equation*}
        since $k_1\leq\log X$. Otherwise, if $\chi_d$ is not induced by the exceptional character, we once again have
        \begin{equation*}
            1-\beta_d>\frac{R_3}{\log Q_1}\gg_B \frac{1}{\log\log \mcX}.
        \end{equation*}
    \end{enumerate}
    Combining the above three cases we obtain that 
    \begin{equation*}
        1-\beta_d\gg\frac{1}{\sqrt{\log \mcX}}.
    \end{equation*}
    Hence, by Lemmas \ref{PNTAPlem} and \ref{muphilem}, we have
    \begin{align*}
        \sum_{\substack{1\leq d\leq Q_1\\(d,\overline{\PP})=1}}\mu^2(d)\sup_{y\leq \mcX}\max_{(a,d)=1}\left|\psi(y;d,a)-\frac{\psi(y)}{\varphi(d)}\right|&\ll_B\sum_{\substack{1\leq d\leq Q_1\\(d,\overline{\mathcal{P}})=1}}\frac{\mu^2(d)}{\varphi(d)}\left(\frac{\mcX}{(\log \mcX)^{B}}+\mcX^{1-1/\sqrt{\log \mcX}}\right)\\
        &\ll_B\frac{\mcX\log\log\mcX}{(\log \mcX)^{B}}+\frac{\mcX\log\log\mcX}{\exp\left(\sqrt{\log \mcX}\right)},
    \end{align*}
    which is again sufficient.
\end{proof}

We also give a related result if we only sum over $k_1\nmid d$ as to avoid any large contribution from a Siegel zero. This will be useful in the case where ${k_1>\log\mcX}$.

\begin{proposition}\label{largeequiprop}
    One has
    \begin{equation*}
        \sum_{\substack{1\leq d\leq D\\(d,\overline{\PP})=1\\k_1\nmid d}}\mu^2(d)\sup_{y\leq \mcX}\max_{(a,d)=1}\left|\psi(y;d,a)-\frac{\psi(y)}{\varphi(d)}\right|\ll_B\frac{\mcX}{(\log\mcX)^{B-3}}
    \end{equation*}
    where the implied constant is effective.
\end{proposition}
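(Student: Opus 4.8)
The plan is to mimic the structure of the proof of Proposition \ref{smallpsiprop}, but now with the extra hypothesis that $k_1 \nmid d$ replacing the case distinction $k_1 \leq \log\mcX$. As before, the range $Q_1 \leq d \leq D$ is handled directly by Lemma \ref{sedunovalem} (the condition $k_1 \nmid d$ only removes terms, so the bound still holds), so it suffices to treat $1 \leq d \leq Q_1$. For this range I would again split into the three cases $2 \leq y \leq \sqrt{\mcX}$, $\sqrt{\mcX} \leq y \leq y_0$, and $y_0 \leq y \leq \mcX$. The first two cases are identical to Cases 1 and 2 in the proof of Proposition \ref{smallpsiprop}: they use only the trivial bound $\psi(y;d,a) \leq \psi(y) \ll y\log y$ and the Brun--Titchmarsh theorem together with Lemma \ref{muphilem}, and they make no reference to $k_1$ at all. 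So the entire content is in Case 3, where a potential Siegel zero $\beta_d$ modulo $d$ could otherwise spoil the bound via the term $y^{\beta_d}$ in Lemma \ref{PNTAPlem}.

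For Case 3 ($y_0 \leq y \leq \mcX$), I would argue exactly as in subcases (a) and (b) of the proof of Proposition \ref{smallpsiprop}. If $k_1 = 0$, then $\beta_d$ is not induced by an exceptional character up to $Q_1$ (since $d$ is square-free with $(d,\overline{\PP})=1$), so Lemma \ref{exceptionalzerolem} gives $1-\beta_d > R_3/\log Q_1 \gg_B 1/\log\log\mcX$. If $k_1 \neq 0$, then the hypothesis $k_1 \nmid d$ guarantees that $\beta_d$ cannot be induced by the exceptional character modulo $k_1$, so again Lemma \ref{exceptionalzerolem} yields $1-\beta_d \gg_B 1/\log\log\mcX$. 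Crucially, the troublesome subcase (c) of Proposition \ref{smallpsiprop} — where $k_1 \mid d$ and one has only the weaker bound $1-\beta_d \gg 1/\sqrt{\log\mcX}$ from Lemma \ref{siegelboundlem} — simply does not occur here, because $k_1 \nmid d$ for every $d$ in the sum. Hence in all cases $1 - \beta_d \gg_B 1/\log\log\mcX$, which is in fact a \emph{better} bound than the $1/\sqrt{\log\mcX}$ obtained in Proposition \ref{smallpsiprop}.

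Feeding this into Lemma \ref{PNTAPlem}, one gets $y^{\beta_d} \ll \mcX^{1 - c/\log\log\mcX}$ for the $d$ with a Siegel zero, and then summing over $d \leq Q_1$ against $\mu^2(d)/\varphi(d)$ via Lemma \ref{muphilem} (with $\gamma = 1$) gives a total contribution
\[
    \ll_B \frac{\mcX\log\log\mcX}{(\log\mcX)^B} + \mcX^{1 - c/\log\log\mcX}\log\log\mcX,
\]
and since $\mcX^{1 - c/\log\log\mcX} = \mcX \exp(-c\log\mcX/\log\log\mcX)$ decays faster than any power of $\log\mcX$, both terms are $\ll_B \mcX/(\log\mcX)^{B-2}$, as required. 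I do not expect any genuine obstacle: the proposition is strictly easier than Proposition \ref{smallpsiprop} because restricting to $k_1 \nmid d$ is exactly what is needed to kill the bad subcase, and it should be proved essentially by referring back to that argument with subcase (c) deleted. The only mild care needed is to check that the implied constants remain effective, which is immediate since every ingredient (Lemmas \ref{sedunovalem}, \ref{PNTAPlem}, \ref{Bruntitlem}, \ref{muphilem}, \ref{exceptionalzerolem}) is effective.
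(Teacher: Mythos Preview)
Your proposal is correct and follows exactly the paper's approach: the paper's proof is the single sentence ``Identical to the proof of Proposition \ref{smallpsiprop} except we do not include the case $k_1\mid d$, which required $k_1\leq \log\mcX$.'' You have simply unpacked this, correctly identifying that subcase (c) is precisely what the restriction $k_1\nmid d$ eliminates.
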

\begin{proof}
    Identical to the proof of Proposition \ref{smallpsiprop} except we do not include the case $k_1\mid d$ (i.e. Case (c)), which required $k_1\leq \log\mcX$.
\end{proof}

\subsection{Bounds relating to $\pi(x;d,a)$}

We now convert our results on $\psi(x;d,a)$ to results in terms of the usual prime-counting function $\pi(x;d,a)$. The proof of the following result is similar to that of \cite[Corollary 1.5]{sedunova2018partial} but we take more care so as to save an extra factor of $\log\mcX$. Moreover, to simplify our argument, we restrict to $y\in[y_0,\mcX]$ for now, but later weaken this restriction in our subsequent, more general result (Proposition \ref{mainPNTAPprop}).

\begin{proposition}\label{mainpiprop}
    We have
    \begin{equation}\label{piequieq}
        \sum_{\substack{1\leq d\leq D\\(d,\overline{\PP})=1\\k_1\nmid d}}\mu^2(d)\sup_{y_0\leq y\leq \mcX}\max_{(a,d)=1}\left|\pi(y;d,a)-\frac{\pi(y)}{\varphi(d)}\right|\ll_B\frac{\mcX}{(\log\mcX)^{B-2}}
    \end{equation}
    and if $k_1\leq\log \mcX$, then
    \begin{equation}\label{piequieq2}
        \sum_{\substack{1\leq d\leq D\\(d,\overline{\PP})=1}}\mu^2(d)\sup_{y_0\leq y\leq \mcX}\max_{(a,d)=1}\left|\pi(y;d,a)-\frac{\pi(y)}{\varphi(d)}\right|\ll_B\frac{\mcX}{(\log\mcX)^{B-2}}
    \end{equation}
    where the implied constant is effective.
\end{proposition}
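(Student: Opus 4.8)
The plan is to convert the $\psi$-bounds in Propositions \ref{smallpsiprop} and \ref{largeequiprop} into the claimed $\pi$-bounds by partial summation, being careful to avoid losing a factor of $\log\mcX$. First I would fix a square-free $d$ with $(d,\overline{\PP})=1$ (and, for \eqref{piequieq}, with $k_1\nmid d$), fix $a$ coprime to $d$, and write
\begin{equation*}
    \pi(y;d,a)-\frac{\pi(y)}{\varphi(d)}=\int_{2^-}^{y}\frac{1}{\log t}\,d\!\left(\psi(t;d,a)-\frac{\psi(t)}{\varphi(d)}\right)+\text{(prime-power correction)},
\end{equation*}
where the correction term accounts for the difference between $\psi$ counting prime powers and $\log$-weights; this correction contributes $O(\sqrt{y})$ uniformly and, summed over $d\leq D\leq\sqrt{\mcX}$, is $O(\mcX/(\log\mcX)^{B})$ or better. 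After integrating by parts, the main term becomes
\begin{equation*}
    \frac{1}{\log y}\left(\psi(y;d,a)-\frac{\psi(y)}{\varphi(d)}\right)+\int_{y_0}^{y}\frac{1}{t(\log t)^2}\left(\psi(t;d,a)-\frac{\psi(t)}{\varphi(d)}\right)dt + (\text{boundary contribution from }[2,y_0]).
\end{equation*}

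The key point is that the boundary piece on $[2,y_0]$ can be controlled cheaply: for $t\le y_0$ we use Brun--Titchmarsh (Lemma \ref{Bruntitlem}) together with the trivial bound to get $|\psi(t;d,a)-\psi(t)/\varphi(d)|\ll y_0/\varphi(d)$, so this contributes $\ll y_0/(\varphi(d)\log y_0)\ll \mcX/(\varphi(d)(\log\mcX)^{B+1})$, and after summing against Lemma \ref{muphilem} (with $\gamma=1$) this is $\ll \mcX\log\log\mcX/(\log\mcX)^{B+1}$, comfortably within the error. For the two remaining terms, I would take the supremum over $y\in[y_0,\mcX]$ inside the absolute values first: since $1/\log y\le 1/\log y_0\asymp 1/\log\mcX$ and $\int_{y_0}^{\mcX}dt/(t(\log t)^2)\le 1/\log y_0\asymp 1/\log\mcX$, both terms are bounded by
\begin{equation*}
    \frac{C}{\log\mcX}\sup_{y\leq \mcX}\max_{(a,d)=1}\left|\psi(y;d,a)-\frac{\psi(y)}{\varphi(d)}\right|.
\end{equation*}
Summing this over the relevant range of $d$ with the $\mu^2(d)$ weight and invoking Proposition \ref{largeequiprop} (for \eqref{piequieq}) or Proposition \ref{smallpsiprop} (for \eqref{piequieq2}, using the hypothesis $k_1\le\log\mcX$) gives $\ll (1/\log\mcX)\cdot \mcX/(\log\mcX)^{B-2}=\mcX/(\log\mcX)^{B-1}$, which is exactly the claimed bound.

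The main obstacle — and the reason the proposition is stated with the restriction $y\in[y_0,\mcX]$ rather than $y\le\mcX$ — is the interplay between the supremum over $y$ and the integration by parts: one cannot naively pull $\sup_y$ through the integral $\int_2^y$, so I must split at $y_0$ and argue that the low-$t$ range is negligible via Brun--Titchmarsh, while for the high range the monotonicity of $1/\log t$ and the convergence of $\int dt/(t(\log t)^2)$ let the single quantity $\sup_{y\le\mcX}|\psi(y;d,a)-\psi(y)/\varphi(d)|$ dominate everything. The bookkeeping to make sure the gain is exactly one power of $\log\mcX$ (and not zero, as a cruder argument using $\pi(y)\ll y/\log y$ and $\psi$'s trivial size would give) is the delicate part, and is precisely where taking care pays off compared to \cite[Corollary 1.5]{sedunova2018partial}.
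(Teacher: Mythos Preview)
Your approach is essentially the same as the paper's: introduce the partial-summation identity relating the $\pi$-error to the $\psi$-error, split the resulting integral, control the low-$t$ range cheaply, and bound the high-$t$ range by $(1/\log\mcX)$ times the quantity in Proposition~\ref{smallpsiprop} or~\ref{largeequiprop}. The paper splits the integral at $y^{0.1}$ rather than at $y_0$, and uses the trivial bound $\psi(t)\ll t\log t$ (yielding a contribution $\ll\mcX^{0.2}$) rather than Brun--Titchmarsh for the low range; this sidesteps two small slips in your write-up.

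First, your claimed uniform bound $|\psi(t;d,a)-\psi(t)/\varphi(d)|\ll y_0/\varphi(d)$ for all $t\le y_0$ is not quite right: for $t$ near $d$ Brun--Titchmarsh either does not apply or carries an extra $\sqrt{t}\log t$ term without the $1/\varphi(d)$ factor. Second, even granting that pointwise bound, the integral $\int_2^{y_0}\frac{dt}{t(\log t)^2}$ is $\asymp 1$, not $\asymp 1/\log y_0$, so the low-range contribution is $\ll y_0/\varphi(d)$ rather than $y_0/(\varphi(d)\log y_0)$. Neither slip is fatal: summing $\mu^2(d)y_0/\varphi(d)$ over $d\le D$ via Lemma~\ref{muphilem} still gives $\ll y_0\log D\ll \mcX/(\log\mcX)^{B-1}$, and the stray $\sqrt{t}\log t$ terms integrate and sum to something much smaller. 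But the paper's choice of splitting at a small power of $y$ and using purely trivial bounds makes the bookkeeping cleaner.
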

\begin{proof}
    We only prove \eqref{piequieq2} since the proof of \eqref{piequieq} is essentially identical, using Proposition \ref{largeequiprop} in place of Proposition \ref{smallpsiprop}. 

    Let $y_0\leq y\leq\mcX$. We begin by defining
    \begin{equation*}
        \pi_1(y):=\sum_{2\leq n\leq y}\frac{\Lambda(n)}{\log n}\quad\text{and}\quad\pi_1(y;d,a):=\sum_{\substack{2\leq n\leq y\\n\equiv a\thinspace \text{(mod}\ d)}}\frac{\Lambda(n)}{\log n}.
    \end{equation*}
    Then, partial summation gives
    \begin{equation}\label{partsumpi1}
        \left|\pi_1(y;d,a)-\frac{\pi_1(y)}{\varphi(d)}\right|=\left|\frac{\psi(y;d,a)-\psi(y)/\varphi(d)}{\log y}-\int_2^y\frac{\psi(t;d,a)-\psi(t)/\varphi(d)}{t(\log t)^2}\mathrm{d}t\right|.
    \end{equation}
    Using \eqref{partsumpi1} and Proposition \ref{smallpsiprop}, we first show that \eqref{piequieq} holds with $\pi$ replaced with $\pi_1$. That is,
    \begin{equation}\label{pi1equieq}
        \sum_{\substack{1\leq d\leq D\\(d,\overline{\PP})=1}}\mu^2(d)\max_{(a,d)=1}\left|\pi_1(y;d,a)-\frac{\pi_1(y)}{\varphi(d)}\right|\ll_B\frac{\mcX}{(\log\mcX)^{B-2}}
    \end{equation}
    uniformly for $y\in[y_0,\mcX]$.
    We split the integral in \eqref{partsumpi1} into the ranges $2\leq t\leq y^{0.1}$ and $y^{0.1}\leq t\leq y$. In the first case, the trivial bounds ${\psi(t;d,a)\leq \psi(t)\leq t\log t}$ give
    \begin{equation}\label{smallinteq}
        \int_2^{y^{0.1}}\frac{\psi(t;d,a)-\psi(t)/\varphi(d)}{t(\log t)^2}\mathrm{d}t\ll \int_{2}^{y^{0.1}}\frac{1}{\log t}\mathrm{d}t\ll \mcX^{0.1}.
    \end{equation}
    Applying the triangle inequality to \eqref{partsumpi1} and substituting in \eqref{smallinteq} then yields
    \begin{align}
        &\left|\pi_1(y;d,a)-\frac{\pi_1(y)}{\varphi(d)}\right|\notag\\\
        &\quad\ll\frac{1}{\log y}\left|\psi(y;d,a)-\frac{\psi(y)}{\varphi(d)}\right|+\sup_{y^{0.1}\leq t\leq y}\left|\psi(t;d,a)-\frac{\psi(t)}{\varphi(d)}\right|\int_{y^{0.1}}^y\frac{1}{t(\log t)^2}\mathrm{d}t+\mcX^{0.1}\notag\\
        &\quad\ll\frac{1}{\log\mcX}\sup_{y^{0.1}\leq t\leq y}\left|\psi(t;d,a)-\frac{\psi(t)}{\varphi(d)}\right|+\mcX^{0.1}.\notag
    \end{align}
    Hence, using Proposition \ref{smallpsiprop} gives \eqref{pi1equieq} as desired. To finish, we note that
    \begin{equation}\label{pi1eq1}
        \pi_1(y;d,a)-\pi(y;d,a)=\sum_{2\leq k\leq\frac{\log y}{\log 2}}\sum_{\substack{p^k\leq y\\p^k\equiv a\thinspace \text{(mod}\ d)}}\frac{1}{k}\leq\sum_{2\leq k\leq\frac{\log y}{\log 2}}\frac{\pi(\sqrt{y})}{2}\ll\sqrt{y},    
    \end{equation}
    where the last inequality follows from the prime number theorem. Similarly, 
    \begin{equation}\label{pi1eq2}
        \pi_1(y)-\pi(y)\ll \sqrt{y}.
    \end{equation}
    To finish, we substitute \eqref{pi1equieq}, \eqref{pi1eq1} and \eqref{pi1eq2} into \eqref{piequieq}, giving
    \begin{equation*}
        \sum_{\substack{1\leq d\leq D\\(d,\overline{\PP})=1}}\mu^2(d)\max_{(a,d)=1}\left|\pi(y;d,a)-\frac{\pi(y)}{\varphi(d)}\right|\ll_B\frac{\mcX}{(\log\mcX)^{B-2}}+\sqrt{y}D\ll\frac{\mcX}{(\log\mcX)^{B-2}}
    \end{equation*}
    as required.
\end{proof}

For the purposes of bounding a sieve remainder term of the form \eqref{RDeq}, we now wish to incorporate a factor of $\gamma^{\omega(d)}$ into the above result. For this, we first recall the following bound on the moments of the divisor function $\tau(d)$.
\begin{lemma}[{See \cite[Theorem~318]{hardy2008introduction} and \cite{luca2017rth}}]\label{divmomlem}
    Let $\tau(d)$ denote the number of divisors of $d$. Then, for any integer $r\geq 1$, one has
    \begin{equation*}
        \sum_{d\leq x}\tau(d)^r\ll_r x(\log x)^{2^r-1}.
    \end{equation*}
\end{lemma}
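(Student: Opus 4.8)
The plan is to reduce the $r$-th moment of $\tau$ to the classical mean-value bound for the generalised divisor function $\tau_m$ (the $m$-fold Dirichlet convolution of the constant function $1$), namely
\begin{equation*}
    \sum_{k\le y}\tau_m(k)\ll_m y(\log y)^{m-1},
\end{equation*}
which follows by a routine induction on $m$: the case $m=1$ is $\sum_{k\le y}1=\lfloor y\rfloor\ll y$, and the inductive step uses $\tau_m=\tau_{m-1}*1$ together with $\sum_{d\le y}1/d\ll\log y$. Granting this, it suffices to write $\tau(n)^r=(h_r*\tau_{2^r})(n)$ for a multiplicative function $h_r$ with $\sum_{n}|h_r(n)|/n<\infty$, and then split the resulting double sum.

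To construct $h_r$ I would compare Euler products. Since $\tau(p^k)^r=(k+1)^r$, the Dirichlet series $\sum_n\tau(n)^rn^{-s}$ has Euler factor $\sum_{k\ge0}(k+1)^rp^{-ks}$ at $p$, while $\zeta(s)^{2^r}$ has Euler factor $\sum_{k\ge0}\tau_{2^r}(p^k)p^{-ks}=(1-p^{-s})^{-2^r}$. Hence $\sum_n\tau(n)^rn^{-s}=\zeta(s)^{2^r}H_r(s)$ with $H_r(s)=\sum_n h_r(n)n^{-s}$, where $h_r$ is multiplicative and $h_r(p^k)=c_{r,k}$ is the coefficient of $x^k$ in $(1-x)^{2^r}\sum_{j\ge0}(j+1)^rx^j$. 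The one small computation the argument hinges on is that the linear coefficient vanishes, $c_{r,1}=2^r-2^r=0$, so that $h_r(p)=0$ for every $p$; combined with the crude bound $|c_{r,k}|\le 2^{2^r}(k+1)^r$ this gives $\sum_{k\ge2}|c_{r,k}|p^{-k}\ll_r p^{-2}$, and therefore
\begin{equation*}
    \sum_{n\ge1}\frac{|h_r(n)|}{n}=\prod_p\Bigl(1+\sum_{k\ge2}|c_{r,k}|p^{-k}\Bigr)=\prod_p\bigl(1+O_r(p^{-2})\bigr)\ll_r 1.
\end{equation*}

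Finally I would expand the convolution and interchange the order of summation:
\begin{align*}
    \sum_{n\le x}\tau(n)^r&=\sum_{d\le x}h_r(d)\sum_{m\le x/d}\tau_{2^r}(m)\ll_r\sum_{d\le x}|h_r(d)|\,\frac{x}{d}\bigl(\log(x/d)\bigr)^{2^r-1}\\
    &\le x(\log x)^{2^r-1}\sum_{d\le x}\frac{|h_r(d)|}{d},
\end{align*}
and the last sum is $O_r(1)$ by the preceding display, which yields the claimed estimate.

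The main obstacle is little more than bookkeeping: one must check that $H_r$ has an absolutely convergent Dirichlet series in the half-plane $\Re(s)>\tfrac12$ — equivalently, that $h_r$ is supported on square-full moduli with polynomially controlled coefficients — and this reduces exactly to the vanishing of $c_{r,1}$ and the elementary bound on $c_{r,k}$. One could alternatively run a direct induction on $r$, but the factorisation route delivers the exact log-power $2^r-1$ and avoids the off-by-one loss that a bare Rankin's-trick bound would incur.
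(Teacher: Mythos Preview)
Your argument is correct. The paper itself does not prove this lemma at all: it is stated with a bare citation to Hardy--Wright and to Luca's paper on the $r$-th moment, so there is no ``paper's proof'' to compare against. What you have written is the standard Selberg--Delange type factorisation $\sum_n\tau(n)^rn^{-s}=\zeta(s)^{2^r}H_r(s)$ with $H_r$ absolutely convergent past $\Re(s)=1/2$, followed by the elementary hyperbola/induction bound for $\tau_{2^r}$; the key computation $c_{r,1}=2^r-2^r=0$ is exactly what forces $h_r$ to be supported on square-full integers, and your coefficient bound $|c_{r,k}|\le 2^{2^r}(k+1)^r$ is enough to make the product $\prod_p(1+O_r(p^{-2}))$ converge. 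One cosmetic point: when $x/d$ is close to $1$ the factor $(\log(x/d))^{2^r-1}$ is not literally what the induction gives, so it is cleaner to write the inner bound as $\ll_r (x/d)(1+\log(x/d))^{2^r-1}$ before majorising by $(1+\log x)^{2^r-1}$; this changes nothing in the end.
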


Our main Bombieri--Vinogradov style result is then as follows.

\begin{proposition}\label{mainPNTAPprop}
    With $B_{\gamma}$ as in \eqref{Bgamdef}, one has
    \begin{equation}\label{pilargegammaeq}
        \sum_{\substack{1\leq d\leq D\\(d,\overline{\PP})=1\\k_1\nmid d}}\mu^2(d)\gamma^{\omega(d)}\sup_{y\leq \mcX}\max_{(a,d)=1}\left|\pi(y;d,a)-\frac{\pi(y)}{\varphi(d)}\right|\ll_{B,\gamma}\frac{\mcX}{(\log\mcX)^{B_{\gamma}}}
    \end{equation}
    and if $k_1\leq\log \mcX$,
    \begin{equation}\label{pismallgammaeq}
        \sum_{\substack{1\leq d\leq D\\(d,\overline{\PP})=1}}\mu^2(d)\gamma^{\omega(d)}\sup_{y\leq \mcX}\max_{(a,d)=1}\left|\pi(y;d,a)-\frac{\pi(y)}{\varphi(d)}\right|\ll_{B,\gamma}\frac{\mcX}{(\log\mcX)^{B_{\gamma}}}
    \end{equation}
    where the implied constants are effective.
\end{proposition}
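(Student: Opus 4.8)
The goal is to upgrade Proposition \ref{mainpiprop} by inserting the weight $\gamma^{\omega(d)}$ into the sum, at the cost of a controlled power of $\log\mcX$. The natural strategy splits into the two regimes $0<\gamma\leq 1$ and $\gamma>1$. When $0<\gamma\leq 1$ we have $\gamma^{\omega(d)}\leq 1$, so the weight only helps: the sums in \eqref{pilargegammaeq} and \eqref{pismallgammaeq} are bounded termwise by those in \eqref{piequieq} and \eqref{piequieq2} of Proposition \ref{mainpiprop}, immediately giving the bound $\ll_B \mcX/(\log\mcX)^{B-1}$, which is exactly $\mcX/(\log\mcX)^{B_\gamma}$ in this case. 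So the content is entirely in the range $\gamma>1$.

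For $\gamma>1$ the plan is a Cauchy--Schwarz split, as the paper foreshadows. Write each summand as $\bigl(\mu^2(d)\gamma^{\omega(d)} E_d^{1/2}\bigr)\cdot\bigl(\mu^2(d) E_d^{1/2}\bigr)$ where $E_d = \sup_{y\leq\mcX}\max_{(a,d)=1}|\pi(y;d,a)-\pi(y)/\varphi(d)|$. First I would apply the Brun--Titchmarsh theorem (Lemma \ref{Bruntitlem}) together with the trivial prime number theorem bound $\pi(y)\ll \mcX/\log\mcX$ to get the crude pointwise estimate $E_d \ll \mcX/(\varphi(d)\log(\mcX/d))$; since $d\leq D = \sqrt{\mcX}/(\log\mcX)^B$ we have $\log(\mcX/d)\gg \log\mcX$, so $E_d \ll \mcX/(\varphi(d)\log\mcX)$. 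Then by Cauchy--Schwarz,
\begin{align*}
    \sum_{\substack{d\leq D\\(d,\overline{\PP})=1\\ k_1\nmid d}} \mu^2(d)\gamma^{\omega(d)} E_d
    &\leq \left(\sum_{d\leq D} \frac{\mu^2(d)\gamma^{2\omega(d)}}{\varphi(d)}\cdot\frac{\mcX}{\log\mcX}\right)^{1/2}\left(\sum_{\substack{d\leq D\\(d,\overline{\PP})=1\\ k_1\nmid d}} \mu^2(d) E_d\right)^{1/2}.
\end{align*}
The first factor is handled by Lemma \ref{muphilem} with parameter $\gamma^2$, giving $\ll (\mcX/\log\mcX)^{1/2}(\log\mcX)^{\gamma^2/2} = \mcX^{1/2}(\log\mcX)^{(\gamma^2-1)/2}$. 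The second factor is exactly the (square root of the) bound from Proposition \ref{mainpiprop}, namely $\ll (\mcX/(\log\mcX)^{B-1})^{1/2}$. Multiplying, we obtain $\ll \mcX/(\log\mcX)^{(B-\gamma^2)/2} = \mcX/(\log\mcX)^{B_\gamma}$, which is precisely what is claimed (and $B>\gamma^2$ ensures this is a genuine saving). The same computation applies verbatim to \eqref{pismallgammaeq} using \eqref{piequieq2} in place of \eqref{piequieq}; the only difference between the two estimates is whether we sum over all $d$ or exclude multiples of $k_1$, and this distinction is inherited transparently from Proposition \ref{mainpiprop}.

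I would note that Lemma \ref{divmomlem} on divisor moments, stated just before the proposition, is an alternative tool one could use to bound $\sum \gamma^{2\omega(d)}$ directly against the large second moment of $E_d$ (via $E_d \ll \mcX/\varphi(d)$ crudely and then Cauchy--Schwarz with different weights), but the route through Lemma \ref{muphilem} is cleaner since it already produces the $1/\varphi(d)$ weight that matches Brun--Titchmarsh; one only needs the observation $\gamma^{2\omega(d)} = (\gamma^2)^{\omega(d)}$ so that Lemma \ref{muphilem} applies with the constant $\gamma^2$. The main (and only) subtlety is making sure the pointwise Brun--Titchmarsh bound on $E_d$ is legitimately uniform over $y\leq\mcX$ — for $y$ much smaller than $\mcX$ one simply uses $E_d \leq \pi(y;d,a)+\pi(y)/\varphi(d) \ll y/\varphi(d) + \sqrt{y} \ll \mcX/(\varphi(d)\log\mcX)$ once $y$ is, say, at least a fixed power of $\mcX$, and for genuinely tiny $y$ the contribution is negligible after multiplying by $D\leq\sqrt{\mcX}$, exactly as in the Case 1 argument inside the proof of Proposition \ref{smallpsiprop}. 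Everything here is effective because each ingredient — Brun--Titchmarsh, Lemma \ref{muphilem}, and Proposition \ref{mainpiprop} — is effective.
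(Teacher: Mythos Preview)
Your core approach (trivial for $\gamma\leq 1$; Cauchy--Schwarz against Brun--Titchmarsh and Lemma~\ref{muphilem} for $\gamma>1$) is the same as the paper's, and the arithmetic leading to the exponent $(B-\gamma^2)/2$ is correct. However, there is a gap you brush past: Proposition~\ref{mainpiprop} is stated only for the \emph{restricted} supremum $y_0\leq y\leq\mcX$, whereas the proposition you are proving carries the full supremum $y\leq\mcX$. So your claim that the $\gamma\leq 1$ case ``follows immediately'' from Proposition~\ref{mainpiprop}, and your claim that the second Cauchy--Schwarz factor $\sum_d \mu^2(d)E_d$ is bounded ``exactly'' by Proposition~\ref{mainpiprop}, are both incomplete as written. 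You handle $y\leq\sqrt{\mcX}$ at the end, but you do not treat the intermediate range $\sqrt{\mcX}\leq y\leq y_0$, and your sketched pointwise bound ``$E_d\ll y/\varphi(d)+\sqrt y$'' is not quite right (the $\sqrt y$ is spurious, and $\pi(y;d,a)\leq y/\varphi(d)$ does not hold without Brun--Titchmarsh input).

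The paper resolves this by doing the $y$-range case split \emph{before} Cauchy--Schwarz: Case~1 ($y\leq\sqrt{\mcX}$) uses trivial bounds together with Lemma~\ref{divmomlem} to control $\sum_{d\leq D}\mu^2(d)\gamma^{\omega(d)}$; Case~2 ($\sqrt{\mcX}\leq y\leq y_0$) uses Brun--Titchmarsh plus Lemma~\ref{muphilem} directly; and only Case~3 ($y_0\leq y\leq\mcX$) invokes Cauchy--Schwarz, where Proposition~\ref{mainpiprop} now applies verbatim. Your uniform-in-$y$ Cauchy--Schwarz organization can be made to work and indeed bypasses Lemma~\ref{divmomlem}, but you must first establish that $\sum_{d\leq D}\mu^2(d)\sup_{y\leq\mcX}|\cdot|\ll\mcX/(\log\mcX)^{B-1}$ --- essentially the $\gamma=1$ case of the very proposition you are proving --- and that requires the same two missing $y$-ranges.
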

\begin{proof}
    We only prove \eqref{pismallgammaeq} as the proof of \eqref{pilargegammaeq} is essentially identical. The case $\gamma\leq 1$ follows immediately from Proposition \ref{smallpsiprop} so we assume $\gamma>1$ throughout. We split into three cases for $y$.\\
    \\
    \textbf{Case 1:} $2\leq y\leq \sqrt{\mcX}$.\\
    In this case, we use the trivial bounds $\pi(y;d,a)\leq \pi(y)\leq y$ to obtain
    \begin{align}\label{maincase1}
        \sum_{\substack{1\leq d\leq D\\(d,\overline{\PP})=1}}\mu^2(d)\gamma^{\omega(d)}\max_{(a,d)=1}\left|\pi(y;d,a)-\frac{\pi(y)}{\varphi(d)}\right|&\ll y\sum_{1\leq d\leq D}\mu^2(d)\gamma^{\omega(d)}\notag\\
        &\ll\sqrt{\mcX}\sum_{1\leq d\leq D}\mu^2(d)\gamma^{\omega(d)}.
    \end{align}
    We now note that for a square-free number $d$, one has $\tau(d)=2^{\omega(d)}$. Hence,
    \begin{equation}\label{divisortrick}
        \sum_{1\leq d\leq D}\mu^2(d)\gamma^{\omega(d)}=\sum_{1\leq d\leq D}\mu^2(d)2^{\omega(d)(\log\gamma/\log 2)}\ll\sum_{1\leq d\leq D}\tau(d)^{\lceil\log\gamma/\log 2\rceil}.
    \end{equation}
    Lemma \ref{divmomlem} then gives
    \begin{equation*}
        \sum_{1\leq d\leq D}\tau(d)^{\lceil\log\gamma/\log 2\rceil}\ll_\gamma D(\log D)^{2\gamma-1}\ll_{B,\gamma}\frac{\sqrt{\mcX}}{(\log\mcX)^{B-2\gamma+1}}
    \end{equation*}
    so that \eqref{maincase1} is bounded by
    \begin{equation*}
        \sum_{\substack{1\leq d\leq D\\(d,\overline{\PP})=1}}\mu^2(d)\gamma^{\omega(d)}\max_{(a,d)=1}\left|\pi(y;d,a)-\frac{\pi(y)}{\varphi(d)}\right|\ll_{B,\gamma}\frac{\mcX}{(\log\mcX)^{B-2\gamma+1}}.
    \end{equation*}
    This bound is sufficient since 
    \begin{equation*}
        B-2\gamma+1\geq B_{\gamma}:=(B-1-\gamma^2)/2
    \end{equation*} for $B>\gamma^2+1$.\\
    \\
    \textbf{Case 2:} $\sqrt{\mcX}\leq y\leq y_0$.\\
    In this case, the Brun--Titchmarsh theorem (see e.g.~\cite[Theorem~2]{montgomery1973large}) gives
    \begin{equation*}
        \pi(y;d,a)\ll_B\frac{y}{\varphi(d)\log\log\mcX}\quad\text{and}\quad\frac{\pi(y)}{\varphi(d)}\ll\frac{y}{\varphi(d)\log y}
    \end{equation*}
    for all $d\leq D$. Using these bounds and Lemma \ref{muphilem},
    \begin{align*}
        \sum_{\substack{1\leq d\leq D\\(d,\overline{\PP})=1}}\mu^2(d)\gamma^{\omega(d)}\max_{(a,d)=1}\left|\pi(y;d,a)-\frac{\pi(y)}{\varphi(d)}\right|&\ll_B\frac{y_0}{\log\log\mcX}\sum_{d\leq D}\frac{\mu^2(d)\gamma^{\omega(d)}}{\varphi(d)}\\
        &\ll_{B,\gamma}\frac{\mcX}{(\log\mcX)^{B-\gamma}\log\log\mcX},
    \end{align*}
    which is again sufficient.\\
    \\
    \textbf{Case 3:} $y_0\leq y\leq \mcX$.\\
    In this case, we begin by writing
    \begin{equation*}
        E(d)=\mu^2(d)\max_{(a,d)=1}\left|\pi(y;d,a)-\frac{\pi(y)}{\varphi(d)}\right|\quad\text{and}\quad f(d)=\mu^2(d)\gamma^{\omega(d)}.
    \end{equation*}
    By the Cauchy--Schwarz inequality,
    \begin{align}\label{cseq}
        \sum_{\substack{d\leq D\\(d,\overline{\PP})=1}}f(d)E(d)&\leq\sqrt{{\sum_{\substack{d\leq D\\(d,\overline{\PP})=1}}f(d)^2E(d)}}\sqrt{\sum_{\substack{d\leq D\\(d,\overline{\PP})=1}}E(d)}.
    \end{align}
    Here, Proposition \ref{mainpiprop} gives
    \begin{equation}\label{cseq1}
        \sum_{\substack{d\leq D\\(d,\overline{\PP})=1}}E(d)\ll_B\frac{\mcX}{(\log\mcX)^{B-2}}.
    \end{equation}
    Next, an application of the Brun--Titchmarsh theorem and Lemma \ref{muphilem} yields
    \begin{equation}\label{cseq2}
        \sum_{\substack{d\leq D\\(d,\overline{\PP})=1}}f(d)^2E(d)\ll\frac{\mcX}{\log\mcX}\sum_{\substack{d\leq D\\(d,\overline{\PP})=1}}\frac{f(d)^2}{\varphi(d)}\ll_{\gamma}\frac{\mcX(\log\mcX)^{\gamma^2}}{\log\mcX}.
    \end{equation}
    The desired result then follows from substituting \eqref{cseq1} and \eqref{cseq2} into \eqref{cseq}.
\end{proof}

\begin{remark}
    As one might expect, a small improvement is possible if one uses H\"older's inequality in place of the Cauchy--Schwarz inequality. For example, provided ${B>\gamma+3}$, one can apply H\"older's inequality in the proof above, writing
    \begin{equation*}
        \sum_{\substack{d\leq D\\(d,\overline{\PP})=1}}f(d)E(d)\leq\Bigg({\sum_{\substack{d\leq D\\(d,\overline{\PP})=1}}f(d)^rE(d)}\Bigg)^{1/r}\Bigg(\sum_{\substack{d\leq D\\(d,\overline{\PP})=1}}E(d)\Bigg)^{1/s}.
    \end{equation*}
    in place of \eqref{cseq} with
    \begin{equation*}
        (r,s)=\left(\frac{\log(B-3)}{\log\gamma},\frac{\log(B-3)}{\log(B-3)-\log\gamma}\right).
    \end{equation*}
    This yields a bound in \eqref{pilargegammaeq} and \eqref{pismallgammaeq} of $\mcX/(\log\mcX)^{\widetilde{B}_\gamma}$, with
    \begin{equation*}
        \widetilde{B}_\gamma=B-2-\frac{2(B-2)\log\gamma}{\log(B-3)}.  
    \end{equation*}
    So, for instance, if $B=11$ and $\gamma=2$ then $\widetilde{B}_{\gamma}=4$ which is greater than ${B_{\gamma}=3}$. Nevertheless, we use $B_{\gamma}$ in the statement of Theorems \ref{upperthm} and \ref{lowerthm} for simplicity.
\end{remark}

As we shall see in the subsequent section, Proposition \ref{mainPNTAPprop} is sufficient to prove Theorem \ref{upperthm}. However, to prove the effective sieve lower bound, Theorem \ref{lowerthm}, we also require equidistribution results for the error
\begin{equation}\label{kderr}
    \left|\pi(y;k d,a)-\frac{\pi(y;k,a)}{\varphi(d)}\right|.
\end{equation}
Essentially, we show (Proposition \ref{pimidkprop}) that one can obtain an effective Bombieri--Vinogradov type theorem for \eqref{kderr} whenever $k\mid k_1$ with $k\neq 1$.

To this aim, we work with the character version of Chebyshev's $\psi$ function:
\begin{equation*}
    \psi(x,\chi)=\sum_{n\leq x}\Lambda(n)\chi(n),
\end{equation*}
where $\chi$ is a Dirichlet character. Here, one has the relation (see e.g. \cite[p.\ 377]{montgomery2007multiplicative})
\begin{equation}\label{psichirel}
    \psi(x;d,a) = \frac{1}{\varphi(d)} \sum_{\chi(d)} \overline{\chi}(a) \psi(x, \chi).
\end{equation}
This leads to the following variant of Lemma \ref{PNTAPlem}.

\begin{lemma}\label{PNTAPlem3}
    Let $d\leq Q_1$ and $\beta_d$ denote the Siegel zero modulo $d$ if it exists. Then,
    \begin{equation}\label{PNTAP3eq}
        \sup_{y\leq\mcX} \max_{(a,d)=1}\left|\sum_{\substack{\chi\ (\mathrm{mod}\ d)\\\chi\neq\chi_0}}\overline{\chi}(a)\psi(y,\chi)\right|\ll_B
        \begin{cases}
            \frac{\mcX}{(\log \mcX)^{B}}+\mcX^{\beta_d},&\text{if}\ \beta_d\ \text{exists},\\
            \frac{\mcX}{(\log \mcX)^{B}},& \text{otherwise,}
        \end{cases}
    \end{equation}
    where the implied constant is effective.
\end{lemma}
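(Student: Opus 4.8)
The plan is to deduce Lemma~\ref{PNTAPlem3} directly from Lemma~\ref{PNTAPlem} via the orthogonality relation~\eqref{psichirel}. Multiplying~\eqref{psichirel} by $\varphi(d)$ and separating off the principal character $\chi_0$ (for which $\overline{\chi_0}(a)=1$ since $(a,d)=1$) gives
\begin{equation*}
    \sum_{\substack{\chi\ (\mathrm{mod}\ d)\\\chi\neq\chi_0}}\overline{\chi}(a)\psi(y,\chi) = \varphi(d)\psi(y;d,a) - \psi(y,\chi_0),
\end{equation*}
so the problem reduces to comparing $\psi(y,\chi_0)$ with $\psi(y)$, after which the left-hand side is controlled by the quantity $\varphi(d)\bigl|\psi(y;d,a)-\psi(y)/\varphi(d)\bigr|$ already estimated in Lemma~\ref{PNTAPlem}.

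For the principal-character term, I would note that $\psi(y,\chi_0)$ differs from $\psi(y)$ only by the prime-power contributions of primes dividing $d$, so that
\begin{equation*}
    0\le \psi(y)-\psi(y,\chi_0)=\sum_{p\mid d}\ \sum_{p^m\le y}\log p \le \omega(d)\log y.
\end{equation*}
Since $d\le Q_1=(\log y_0)^B$, we have $\omega(d)\ll_B\log\log y$, hence $\psi(y,\chi_0)=\psi(y)+O_B\bigl((\log y)^{2}\bigr)$ uniformly in $a$ and in $d\le Q_1$.

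Combining the two displays with the triangle inequality yields
\begin{equation*}
    \left|\sum_{\substack{\chi\ (\mathrm{mod}\ d)\\\chi\neq\chi_0}}\overline{\chi}(a)\psi(y,\chi)\right|\le \varphi(d)\left|\psi(y;d,a)-\frac{\psi(y)}{\varphi(d)}\right|+O_B\bigl((\log y)^{2}\bigr),
\end{equation*}
and Lemma~\ref{PNTAPlem} bounds the first term on the right by the claimed expression, with the $y^{\beta_d}$ term present precisely when the Siegel zero mod $d$ exists. Finally, for $y\ge y_0$ one has $(\log y)^2\ll_B y/(\log y)^B$ because $y_0\to\infty$ as $\mcX\to\infty$, so the extra error is absorbed into the first case of~\eqref{PNTAP3eq} (and into the second case when $\beta_d$ does not exist). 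There is no substantive obstacle here: the only point requiring a little care is the bookkeeping of the principal-character term and checking that its contribution is dominated by $y/(\log y)^B$ on the range $y_0\le y\le\mcX$; everything else is an immediate consequence of Lemma~\ref{PNTAPlem}.
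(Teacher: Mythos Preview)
Your proposal is correct and follows essentially the same route as the paper: rewrite the character sum via~\eqref{psichirel}, control $|\psi(y)-\psi(y,\chi_0)|$ by a small power of $\log y$, and then invoke Lemma~\ref{PNTAPlem}. Your bound $|\psi(y)-\psi(y,\chi_0)|\le\omega(d)\log y\ll_B(\log y)^2$ is in fact slightly sharper than the paper's $(\log y)^3$, but the difference is immaterial since either is absorbed into $y/(\log y)^B$.
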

\begin{proof}
    Let $y\leq\mcX$ and $(a,d)=1$. From \eqref{psichirel}, we have
    \begin{equation*}
        \left|\sum_{\substack{\chi\ (\mathrm{mod}\ d)\\\chi\neq\chi_0}}\overline{\chi}(a)\psi(y,\chi)\right|=\varphi(d)\left|\psi(y;d,a)-\frac{\psi(y,\chi_0)}{\varphi(d)}\right|,
    \end{equation*}
    where $\chi_0$ is the principal character mod $d$. Now,
    \begin{align}\label{psiprimbound}
        \left|\psi(y)-\psi(y,\chi_0)\right|\le
        \sum_{\substack{p^m\le y\\p^m\mid d}}\log p\leq\log y\sum_{\substack{p^m\le y\\p^m\mid d}}1\leq \log y\sum_{m\leq \frac{\log y}{\log 2}}\sum_{p\mid d}1\ll(\log y)^3
    \end{align}
    so that the desired result follows from Lemma \ref{PNTAPlem}.
\end{proof}

Our final equidistribution result is then as follows.

\begin{proposition}\label{pimidkprop}
    With $B_\gamma$ as in \eqref{Bgamdef}, we have for each $k\mid k_1$ with $k\neq 1$,
    \begin{equation}\label{pimidkeq}
        {\sum_{\substack{d\leq D/k\\(d,\overline{\PP})=1\\(d,k)=1}}}^{\!\!\!\!\!\sharp}\ \mu^2(d)\gamma^{\omega(d)}\sup_{y\leq \mcX}\max_{(a,k d)=1}\left|\pi(y;k d,a)-\frac{\pi(y;k,a)}{\varphi(d)}\right|\ll_{B,\gamma}\frac{\mcX}{(\log\mcX)^{B_{\gamma}}},
    \end{equation}
    where $\sharp$ means that the sum is restricted to $d$ with $k_1\nmid kd$ if $k\neq k_1$.
\end{proposition}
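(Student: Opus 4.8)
The goal is to prove an effective Bombieri--Vinogradov result for the error $|\pi(y;kd,a)-\pi(y;k,a)/\varphi(d)|$ when $k\mid k_1$, $k\neq 1$. The key observation is that the requirement $k\mid k_1$ with $k\neq 1$ \emph{forces $k$ to absorb the potential Siegel zero}: since $k_1$ is square-free with prime factors $q_1>\cdots>q_\ell$, the only characters mod $kd$ that can have a Siegel zero are those induced by the exceptional character mod $k_1$, which requires $k_1\mid kd$; but $(k,d)=1$ and $k\mid k_1$, so this would need $(k_1/k)\mid d$, and if $k_1/k>1$ then the modulus $k_1$ is ``split'' between $k$ and $d$. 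The crucial point is that we will only sum over $d$ coprime to $k$, and the bad $d$ (those divisible by $k_1/k$) form a sub-sum that we handle separately using the Siegel-zero lower bound $1-\beta\gg 1/\sqrt{k_1}\gg 1/\sqrt{\log\mcX}$ from Lemma~\ref{siegelboundlem} (valid since $k_1\leq Q_1=(\log y_0)^B\leq(\log\mcX)^B$ is \emph{not} necessarily $\leq\log\mcX$ --- so one must be careful; in the regime $k_1>\log\mcX$ one instead uses that $k\mid k_1$ with $k\neq 1$ means $k>1$, and combines with Lemma~\ref{siegelboundlem} applied to modulus $k_1$).

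\textbf{Step 1: Reduce to $\psi$.} Write, via \eqref{psichirel} applied to both $kd$ and $k$,
\[
\varphi(d)\left(\psi(y;kd,a)-\frac{\psi(y;k,a)}{\varphi(d)}\right)=\frac{1}{\varphi(k)}\sum_{\substack{\chi\ (\mathrm{mod}\ kd)\\ \chi\ \text{does not factor through mod }k}}\overline{\chi}(a)\psi(y,\chi),
\]
so the error in \eqref{kderr} is controlled by character sums $\psi(y,\chi)$ for characters $\chi$ mod $kd$ that are \emph{non-principal on the mod-$d$ part}. Then pass from $\psi$ to $\pi$ by partial summation exactly as in the proof of Proposition~\ref{mainpiprop} (splitting the integral at $y^{0.1}$, using $\psi(t)\leq t\log t$ trivially on the small range), losing only a factor $\log\mcX$ and an admissible $\sqrt{y}D$ error from prime powers.

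\textbf{Step 2: The $\psi$ estimate.} We must bound
\[
\sum_{\substack{d\leq D/k\\ (d,\overline{\PP})=1,\ (d,k)=1}}\mu^2(d)\gamma^{\omega(d)}\sup_{y\leq\mcX}\max_{(a,kd)=1}\left|\sum_{\substack{\chi\ (\mathrm{mod}\ kd)\\ \chi|_{(\Z/d)^*}\neq 1}}\overline{\chi}(a)\psi(y,\chi)\right|\ll_{B,\gamma}\frac{\mcX}{(\log\mcX)^{B_\gamma}}.
\]
Split the range of $y$ into $[2,\sqrt{\mcX}]$, $[\sqrt{\mcX},y_0]$, $[y_0,\mcX]$. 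Cases 1 and 2 are handled trivially and by Brun--Titchmarsh plus Lemmas~\ref{muphilem} and \ref{divmomlem} exactly as in Proposition~\ref{mainPNTAPprop} (absorbing $\gamma^{\omega(d)}$ via $\mu^2(d)\gamma^{\omega(d)}\ll\tau(d)^{\lceil\log\gamma/\log 2\rceil}$ when $\gamma>1$, and Cauchy--Schwarz against Proposition~\ref{mainpiprop} for the cross term). For Case 3, $y_0\leq y\leq\mcX$: for each modulus $kd\leq D$ we apply Lemma~\ref{PNTAPlem3} (with $d$ there replaced by $kd$), giving $\ll_B y/(\log y)^B + y^{\beta_{kd}}$ when a Siegel zero $\beta_{kd}$ exists. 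The character mod $kd$ carrying the Siegel zero must be exceptional up to $Q_1$, hence induced by the exceptional character mod $k_1$; this forces $k_1\mid kd$. Since $k\mid k_1$, $(k,d)=1$, write $k_1=k\cdot k'$ with $(k,k')=1$; then $k_1\mid kd\iff k'\mid d$. Split the $d$-sum into $k'\nmid d$ and $k'\mid d$.

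\textbf{Step 3: The two sub-sums in Case 3.} If $k'\nmid d$ (which in particular always holds when $k=k_1$, i.e.\ $k'=1$ is vacuous --- in that case there is \emph{no} Siegel-zero contribution at all), then $\beta_{kd}$ is not induced by an exceptional character, so Lemma~\ref{exceptionalzerolem} gives $1-\beta_{kd}\gg_B 1/\log\log\mcX$, hence $y^{\beta_{kd}}\ll \mcX^{1-c/\log\log\mcX}$, and summing against $\mu^2(d)\gamma^{\omega(d)}/\varphi(d)$ with Lemma~\ref{muphilem} yields $\ll_{B,\gamma}\mcX(\log\mcX)^\gamma/(\log\mcX)^B$ plus a negligible term, which is fine since $B-\gamma\geq B_\gamma$. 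If $k'\mid d$: since $k\neq 1$ we have $k_1=kk'$ with $k\geq 2$, so $k'\leq k_1/2$. Apply Lemma~\ref{siegelboundlem} to the modulus $k_1$: $1-\beta_{k_1}\gg 1/\sqrt{k_1}\gg 1/\sqrt{Q_1}=1/(\log y_0)^{B/2}\gg 1/(\log\mcX)^{B/2}$. Hmm --- this is too weak on its own; instead observe that on this sub-sum $d=k'd'$ with $d'\leq D/(kk')=D/k_1$, and substitute back: the Siegel zero is the \emph{same} $\beta_{k_1}$ for every such $d$, and $\sum_{d'\leq D/k_1}\mu^2(k'd')\gamma^{\omega(k'd')}/\varphi(k'd')\ll_\gamma(\log\mcX)^\gamma/\varphi(k')$, while $y^{\beta_{k_1}}\leq \mcX^{1-R_2/\sqrt{k_1}}$. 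So this sub-sum contributes $\ll_{B,\gamma}(\log\mcX)^\gamma\,\mcX^{1-R_2/\sqrt{k_1}}/\varphi(k')$. When $k_1\leq\log\mcX$ this is $\ll\mcX\exp(-c\sqrt{\log\mcX})$, negligible. When $k_1>\log\mcX$: here $k\mid k_1$ and $k\neq 1$, but we also have $k\leq D$, and crucially $k'=k_1/k\leq D/1$; more to the point, the number of $d\leq D/k$ divisible by $k'$ is at most $D/(kk')=D/k_1<D/\log\mcX=\sqrt{\mcX}/(\log\mcX)^{B+1}$, so even with the \emph{trivial} bound $y^{\beta_{kd}}\leq\mcX$ and $\gamma^{\omega(d)}\ll\tau(d)^{O_\gamma(1)}$ the sub-sum is $\ll_{B,\gamma}\mcX\cdot(\log\mcX)^{O_\gamma(1)}/(\log\mcX)^{B+1}$, which beats $\mcX/(\log\mcX)^{B_\gamma}$ for $B$ large. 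Combining all cases and all sub-sums gives the claimed bound.

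\textbf{Main obstacle.} The delicate point is the sub-sum $k'\mid d$ in Case~3 when the Siegel zero is genuinely present and $k_1$ is large (say $k_1$ a power of $\log\mcX$): one cannot use the weak bound $1-\beta_{k_1}\gg k_1^{-1/2}$ to win, and must instead exploit that the \emph{density} of such $d$ in $[1,D/k]$ is $1/k_1$, which is small precisely because $k_1$ is large --- i.e.\ the sparsity of the bad moduli compensates for the weak zero-repulsion. Making this dichotomy (small $k_1$: use zero-repulsion; large $k_1$: use sparsity) clean, with the $\gamma^{\omega(d)}$ weight handled uniformly via Lemma~\ref{divmomlem}, is the part requiring genuine care; everything else parallels Propositions~\ref{mainpiprop} and \ref{mainPNTAPprop}.
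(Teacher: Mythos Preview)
The paper's proof is structurally simpler than your proposal. After the same character decomposition you describe in Step~1, the paper isolates the main contribution as
\[
\frac{1}{\varphi(kd)}\sum_{\substack{\chi_1\ (\mathrm{mod}\ k)\\ \chi_2\neq\chi_{0,d}\ (\mathrm{mod}\ d)}}\overline{\chi_1\chi_2}(a)\,\psi(y;\chi_1\chi_2)
\]
(plus a negligible $(\log\mcX)^4$ coming from $|\psi(y;\chi_1)-\psi(y;\chi_1\chi_{0,d})|\ll(\log y)^3$), and then argues that, because $k\mid k_1$ and the inner sum omits $\chi_2=\chi_{0,d}$, the exceptional character mod $k_1$ never occurs among the $\chi_1\chi_2$. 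With the exceptional character removed at this stage, the remainder of the argument is identical to Proposition~\ref{smallpsiprop} (using Lemma~\ref{PNTAPlem3} in place of Lemma~\ref{PNTAPlem}), but now \emph{without} the hypothesis $k_1\le\log\mcX$ --- no dichotomy on the size of $k_1$ is ever invoked. The conversion to $\pi$ and to general $\gamma$ then proceeds exactly as in Propositions~\ref{mainpiprop} and~\ref{mainPNTAPprop}. In other words, the paper sidesteps entirely the ``main obstacle'' you flag.

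Your dichotomy attempt in Step~3 contains a genuine gap. In the large-$k_1$ sparsity branch you assert that the number of $d\le D/k$ with $k'\mid d$ is at most $D/k_1<\sqrt{\mcX}/(\log\mcX)^{B+1}$, and then claim the corresponding sub-sum is $\ll_{B,\gamma}\mcX\,(\log\mcX)^{O_\gamma(1)}/(\log\mcX)^{B+1}$. This does not follow. Each term in that sub-sum carries an error $|\pi(y;kd,a)-\pi(y;k,a)/\varphi(d)|$; with your trivial bound $y^{\beta_{kd}}\le\mcX$ the individual contribution (after the $1/\varphi(kd)$ from the character formula) is of order $\mcX/\varphi(kd)$, not $O(1)$. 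Multiplying the crude term count by the crude term size gives something of order $\mcX\cdot D/k_1\asymp\mcX^{3/2}/(\log\mcX)^{B+1}$, which is far too large. If instead you keep the $1/\varphi(kd)$ and use Lemma~\ref{muphilem}, the sub-sum is only $\ll\mcX(\log\mcX)^{\gamma+o(1)}/\bigl(\varphi(k_1)\log\mcX\bigr)\ll\mcX(\log\mcX)^{\gamma-2+o(1)}$, which again fails to beat $\mcX/(\log\mcX)^{B_\gamma}$ once $B$ is moderately large. The sparsity of moduli with $k'\mid d$ is simply not strong enough to compensate for having \emph{no} non-trivial input on $y^{\beta_{k_1}}$. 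The paper's route --- excising the exceptional character directly from the character sum via the condition $\chi_2\neq\chi_{0,d}$ --- is what allows the bound to go through uniformly in $k_1$, and is the missing idea in your plan.
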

\begin{proof}
    The proof is similar to \cite[Lemma~4.10]{BJV2025} but we give the full details for completeness. We also remark that in~\cite[Lemma~4.10]{BJV2025} and its proof there are a couple of small errors, particularly the lack of the $\sharp$ restriction on the moduli $d$. Thus, we offer a corrected statement and proof here.
    
    First we prove the corresponding result for the Chebyshev $\psi$ function and $\gamma=1$. That is,
    \begin{equation}\label{psimidkeq}
        \sum_{\substack{d\leq D/k\\(d,\overline{\PP})=1\\(d,k)=1}}\mu^2(d)\sup_{y\leq \mcX}\max_{(a,k d)=1}\left|\psi(y;k d,a)-\frac{\psi(y;k,a)}{\varphi(d)}\right|\ll_{B}\frac{\mcX}{(\log\mcX)^{B-3}}.
    \end{equation}
    Let $y\leq\mcX$ and $d,k,a\in\mathbb{N}$ such that $(d,\overline{\mathcal{P}})=1$, $(d,k)=1$ and $(a,kd)=1$. Since $(k,d)=1$ any character modulo $kd$ can be represented in a unique way as the product of two characters modulo $k$ and modulo $d$. Therefore,
    \begin{align}
        &\left|\psi(y;k d,a)-\frac{\psi(y;k,a)}{\varphi(d)}\right|\notag\\
        &=\frac{1}{\varphi(kd)}\sum_{\substack{\chi\:(\text{mod } kd)}}\overline{\chi}(a)\psi(y;\chi) - \frac{1}{\varphi(d)\varphi(k)}\sum_{\substack{\chi_1\:(\text{mod } k)}}\overline{\chi_1}(a)\psi(y;\chi_1)\notag \\
        &=\frac{1}{\varphi(kd)}\left[\sum_{\substack{\chi_1\:(\text{mod } k) \\ \chi_2\: (\text{mod }d)}}\overline{\chi_1}(a)\overline{\chi_2}(a)\psi(y;\chi_1\chi_2) - \sum_{\substack{\chi_1\:(\text{mod } k)}}\overline{\chi_1}(a)\overline{\chi_{0,d}}(a)\psi(y;\chi_1)\right] \notag\\
        &=\frac{1}{\varphi(kd)}
        \left[\sum_{\substack{\chi_1\:(\text{mod } k) \\ \chi_2\: (\text{mod }d)}}\overline{\chi_1}(a)\overline{\chi_2}(a)\psi(y;\chi_1\chi_2) - \sum_{\substack{\chi_1\:(\text{mod } k)}}\overline{\chi_1}(a)\overline{\chi_{0,d}}(a)\psi(y;\chi_1 \chi_{0,d})\right]\notag\\
        &\qquad\qquad\qquad\qquad-\frac{1}{\varphi(kd)}\sum_{\substack{\chi_1\:(\text{mod } k)}}\overline{\chi_1}(a)\overline{\chi_{0,d}}(a)\left(\psi(y;\chi_1)-\psi(y;\chi_1 \chi_{0,d})\right) \notag\\
        &=\frac{1}{\varphi(kd)}
        \sum_{\substack{\chi_1\:(\text{mod } k) \\ \chi_2\ne \chi_{0,d}\: (\text{mod }d)}}\overline{\chi_1}(a)\overline{\chi_2}(a)\psi(y;\chi_1\chi_2) \notag\\
        &\qquad\qquad\qquad\qquad-\frac{1}{\varphi(kd)}\sum_{\substack{\chi_1\:(\text{mod } k)}}\overline{\chi_1}(a)\overline{\chi_{0,d}}(a)\left(\psi(y;\chi_1)-\psi(y;\chi_1 \chi_{0,d})\right),\label{psikdeq}
    \end{align}
    where $\chi_{0,d}$ is the principal character modulo $d$. 

    We deal with the second term in \eqref{psikdeq} first. Here, one has (cf.\ \eqref{psiprimbound})
    \begin{equation*}
        \left|\psi(y;\chi_1)-\psi(y;\chi_1 \chi_{0,d})\right|\ll (\log y)^3.
    \end{equation*}
    Hence, by Lemma \ref{muphilem},
    \begin{align}
         \sum_{\substack{d\leq D/k\\(d,\overline{\PP})=1\\(d,k)=1}}\frac{\mu^2(d)}{\varphi(kd)}\left|\sum_{\substack{\chi_1\:(\text{mod } k)}}\overline{\chi_1}(a)\overline{\chi_{0,d}}(a)\left(\psi(y;\chi_1)-\psi(y;\chi_1 \chi_{0,d})\right)\right|&\ll \sum_{d\leq D}\frac{\mu^2(d)}{\varphi(d)}(\log y)^3\notag\\
         &\ll(\log\mcX)^{4}.\label{eq: 534error}
    \end{align}
    Thus, to obtain \eqref{psimidkeq} the only thing left to show is that
    \begin{align}
        \sum_{\substack{d\leq D/k\\(d,\overline{\PP})=1\\(d,k)=1}}\frac{\mu^2(d)}{\varphi(kd)}
        \left| \sum_{\substack{\chi_1\:(\text{mod } k) \\ \chi_2\ne \chi_{0,d}\: (\text{mod }d)}}  \overline{\chi_1}(a)\overline{\chi_2}(a)\psi(y;\chi_1\chi_2)\right|\ll\frac{\mcX}{(\log\mcX)^{B-3}}.\label{psichareq}
    \end{align}
    However, this can be proven in an identical fashion to Proposition \ref{smallpsiprop}, using Lemma \ref{PNTAPlem3} as opposed to Lemma \ref{PNTAPlem}. Here we also do not need the restriction $k_1\leq\log\mcX$ since the conditions $\chi_2\neq\chi_{0,d}$, $k\mid k_1$ and the $\sharp$ restriction on the moduli $d$ ensure that the exceptional character never appears in the inner sum of \eqref{psichareq}.

    To convert this to a result in terms of the prime counting function $\pi$ and arbitrary ${\gamma\geq 0}$, one uses partial summation as in the proof of Proposition \ref{mainpiprop}, followed by the Cauchy--Schwarz inequality as in the proof of Proposition \ref{mainPNTAPprop}, giving \eqref{pimidkeq}.
\end{proof}

\section{Proof of the effective upper bound}\label{uppersect}
We now prove Theorem \ref{upperthm}. The proof is short given Proposition \ref{mainPNTAPprop}. All we require is following upper bound for $V_1(z)$ when $k_1>\log\mcX$. This will correspond to the $O_A(1/\log\log X)$ term in \eqref{sieveubnew}.
\begin{lemma}\label{V1lem}
    Let $k_1$ be as in \eqref{eq:k1} and $V_j(z)$ be as in \eqref{jdefs} for some set of primes $\PP$ and multiplicative function $g(d)$. Suppose that the conditions \eqref{paconds} hold. Then, if $k_1>\log\mcX$,
    \begin{equation}\label{V1toVeq}
        V_1(z)\leq\left(1+O_{A}\left(\frac{1}{\log\log \mcX}\right)\right)V(z).
    \end{equation}
    \begin{proof}
       By the definition~\eqref{jdefs} of $V_1(z)$,
        \begin{equation*}
            V_1(z)\leq\frac{V(z)}{1-g(q_1)}.
        \end{equation*}
        Applying the condition $g(q_1)\leq A/(q_1-1)$ (see~\eqref{paconds}) then gives
        \begin{equation*}
            V_1(z)\leq\frac{V(z)}{1-\frac{A}{q_1-1}}=\left(1+\frac{A}{q_1-1-A}\right)V(z).
        \end{equation*}
        Thus, if we can show that $q_1\gg\log\log \mcX$ then we are done. In this direction, first recall that $k_1$ is square-free by its definition~\eqref{eq:k1}. Thus,
        \begin{equation*}
            \prod_{p\leq q_1}p\geq k_1> \log\mcX.
        \end{equation*}
        Taking logarithms,
        \begin{equation*}
            \theta(q_1)>\log\log\mcX,
        \end{equation*}
        where $\theta(x)=\sum_{p\leq x}\log p$ is the Chebyshev theta prime-counting function. The desired bound $q_1\gg\log\log \mcX$ then follows by the prime number theorem.
    \end{proof}
\end{lemma}

We now complete the proof of Theorem \ref{upperthm}. 
\begin{proof}
    We consider two cases: $k_1\leq\log\mcX$ and $k_1>\log\mcX$.\\
    \\
    \textbf{Case 1:} $k_1\leq\log\mcX$.
    Here, the result follows from \eqref{pismallgammaeq} of Proposition \ref{mainPNTAPprop}.\\
    \\
    \textbf{Case 2:} $k_1>\log\mcX$.
    In this case, we first note that
    \begin{equation}\label{SA1upper}
        S(\mcA,\PP,z)\leq S(\mcA,\PP_1,z)<XV_1(z)\left(F(s)+\varepsilon(X)\right)+\sum_{\substack{d<D\\(d,\overline{\mathcal{P}})=1\\ q_1\nmid d}}\mu^2(d)r(d)
    \end{equation}
    with $\mathcal{P}_1$ is as defined in \eqref{jdefs}. The result now follows by bounding $V_1(z)$ via Lemma \ref{V1lem} and bounding the remainder term in \eqref{SA1upper} via \eqref{pilargegammaeq} of Proposition \ref{mainPNTAPprop}.
\end{proof}

\section{Proof of the effective lower bound}\label{lowersect}
In this section we prove the effective sieve lower bound, Theorem \ref{lowerthm}. The proof is more delicate than the proof of Theorem \ref{upperthm}, and so we begin by proving a series of preliminary lemmas. 

\subsection{Preliminary lemmas}
In the following lemmas, we use the notation and conditions in Theorem \ref{lowerthm}, as well as the notation from Section \ref{setupnotsub}.

To begin with, we use an inclusion-exclusion argument to relate $S(\mcA,\PP,z)$ to sifting functions in terms of $\mcA_{m_j}$ and $\PP_j$. This is the key identity that will allow us to circumvent the Siegel zero in the sieving error term. This lemma is essentially the same as \cite[Lemma 6.3]{BJV2025}, but we include the short proof here for completeness.

\begin{lemma}\label{exinclem}
We have
    \begin{equation}\label{inclexcleq}
        S(\mcA,\PP,z)=\sum_{j=0}^{\ell-1}(-1)^j S(\mcA_{m_j},\PP_{j+1},z)+(-1)^{\ell} S(\mcA_{m_{\ell}},\PP_\ell,z),
    \end{equation}
    with $m_j$ and $\PP_j$ as defined in~\eqref{mjdef} and~\eqref{jdefs} respectively.
\end{lemma}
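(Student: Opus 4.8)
The plan is to derive \eqref{inclexcleq} by iterating the elementary one–prime Buchstab identity, peeling off the primes $q_1,\dots,q_\ell$ one at a time. The building block is the following: for a finite set of integers $\mathcal{B}$ and a prime $q\in\PP$ with $q<z$, writing $\mathcal{B}_q=\{b\in\mathcal{B}:q\mid b\}$, one has
\begin{equation*}
    S(\mathcal{B},\PP,z)=S(\mathcal{B},\PP\setminus\{q\},z)-S(\mathcal{B}_q,\PP\setminus\{q\},z).
\end{equation*}
This is proved by partitioning the elements counted by $S(\mathcal{B},\PP\setminus\{q\},z)$ according to whether $q\mid b$ or not: since $q<z$, an element $b$ with $q\nmid b$ and no prime factor $<z$ in $\PP\setminus\{q\}$ is precisely an element with no prime factor $<z$ in $\PP$, contributing $S(\mathcal{B},\PP,z)$, while the elements with $q\mid b$ contribute $S(\mathcal{B}_q,\PP\setminus\{q\},z)$. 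Note that $q_1,\dots,q_\ell$ all lie in $\PP$ — this is built into the definition \eqref{eq:k1}, which sets $k_1=0$ unless $(k_0,\overline{\PP})=1$ — and in the parameter range in which the sieve bounds are applied they satisfy $q_i\le k_1\le Q_1<z$, so the identity is legitimate for each of them.

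Given this, I would prove by induction on $t=0,1,\dots,\ell$ the intermediate identity
\begin{equation*}
    S(\mcA,\PP,z)=\sum_{j=0}^{t-1}(-1)^j S(\mcA_{m_j},\PP_{j+1},z)+(-1)^t S(\mcA_{m_t},\PP_t,z),
\end{equation*}
the case $t=\ell$ being exactly \eqref{inclexcleq}. The base case $t=0$ is the tautology $S(\mcA,\PP,z)=S(\mcA_{m_0},\PP_0,z)$, since $m_0=1$ and $\PP_0=\PP$. For the inductive step I apply the one–prime identity to the trailing term $S(\mcA_{m_t},\PP_t,z)$ with $q=q_{t+1}$, noting that $q_{t+1}\in\PP_t=\PP\setminus\{q_1,\dots,q_t\}$ (the $q_i$ are distinct), that $\PP_t\setminus\{q_{t+1}\}=\PP_{t+1}$, and that $(\mcA_{m_t})_{q_{t+1}}=\mcA_{m_t q_{t+1}}=\mcA_{m_{t+1}}$ because $(m_t,q_{t+1})=1$. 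This replaces $(-1)^t S(\mcA_{m_t},\PP_t,z)$ by $(-1)^t S(\mcA_{m_t},\PP_{t+1},z)+(-1)^{t+1}S(\mcA_{m_{t+1}},\PP_{t+1},z)$, yielding the $t\mapsto t+1$ form of the identity.

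The argument is entirely formal, so there is no analytic obstacle; the points to watch are purely bookkeeping ones — that $q_{t+1}$ really belongs to $\PP_t$ and satisfies $q_{t+1}<z$ so the Buchstab step is valid, and that upon each substitution only the index on $\PP$ in the trailing term advances (from $\PP_t$ to $\PP_{t+1}$) while the already-expanded terms retain their indices, which is why the final remainder term in \eqref{inclexcleq} carries $\PP_\ell$ rather than $\PP_{\ell-1}$. One could instead prove \eqref{inclexcleq} directly by partitioning $\mcA$ according to the largest $j$ with $q_1\cdots q_j\mid a$, but the telescoping induction above is cleanest and mirrors the short proof of \cite[Lemma 6.3]{BJV2025}.
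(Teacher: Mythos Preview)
Your proof is correct and is essentially the same argument as the paper's. The paper phrases the iteration combinatorially --- observing that $S(\mcA,\PP_1,z)-S(\mcA,\PP,z)$ counts the elements of $\mcA$ divisible by $q_1$ but by no other prime of $\PP$ below $z$, then that $S(\mcA_{m_1},\PP_2,z)-S(\mcA,\PP_1,z)+S(\mcA,\PP,z)$ counts those divisible by $q_1q_2$ but by no other such prime, and so on --- whereas you isolate the one--prime Buchstab identity and run an explicit induction on $t$; these are the same telescoping argument in different clothing.
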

\begin{proof}
    First, note that $S(\mcA,\PP_1,z)-S(\mcA,\PP,z)$ counts the number of integers in $\mcA$ that are divisible by $q_1$ but not by any other primes in $\PP$ less than $z$. Then, $S(\mcA_{m_1},\PP_2,z)-S(\mcA,\PP_1,z)+S(\mcA,\PP,z)$ counts the number of integers in $\mcA$ that are divisible by $q_1$ and $q_2$ but not by any other primes in $\PP$ less than $z$. More generally $\sum_{j=0}^{\ell-1}(-1)^{\ell-1-j}S(\mcA_{m_j},\PP_j,z)+(-1)^{\ell}S(\mcA,\PP,z)$ counts the number of integers in $\mcA$ divisible by $q_1,\ldots,q_{\ell}$ but by no other primes in $\PP$ less than $z$. That is,
    \begin{equation*}
        S(\mcA_{m_\ell},\PP_\ell,z)=\sum_{j=0}^{\ell-1}(-1)^{\ell-1-j}S(\mcA_{m_j},\PP_{j+1},z)+(-1)^{\ell}S(\mcA,\PP,z),
    \end{equation*}
    which rearranges to give the desired result.
\end{proof}
We also have the following two related results, which concern the functions $V_j(z)$. Here, noting that $s>1$ in Theorem \ref{lowerthm}, we will make use of the fact that
\begin{equation}\label{zseq}
    \log z\asymp_{B,s}\log\mcX.
\end{equation}
\begin{lemma}\label{Vinductlem}
    Provided $k_1\neq 0$, one has
    \begin{equation}\label{Vinducteq}
        V(z)=\sum_{j=0}^{\ell-1}(-1)^j g(m_j)V_{j+1}(z)+(-1)^{\ell}g(m_{\ell})V_{\ell}(z).
    \end{equation}
\end{lemma}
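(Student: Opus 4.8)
The identity \eqref{Vinducteq} is the exact ``$V$-analogue'' of the inclusion–exclusion identity \eqref{inclexcleq} from Lemma \ref{exinclem}, with each sifting function $S(\mcA_{m_j},\PP_{j+1},z)$ replaced by its expected main term $g(m_j)V_{j+1}(z)$, so the plan is to prove it by the same telescoping argument carried out purely multiplicatively. The key observation is that the primes $q_1,\dots,q_\ell$ dividing $k_1$ lie in $\PP$ (since $(k_1,\overline{\PP})=1$), and by \eqref{jdefs} we have $\PP_j = \PP_{j+1}\cup\{q_{j+1}\}$, so that removing one more prime from the product defining $V_j(z)$ just divides out one factor.

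First I would record the basic recursion: for $0\le j\le \ell-1$, since $q_{j+1}\in\PP_j$ and $q_{j+1}<z$ (we may assume $z$ large enough, using \eqref{zseq}, that all the $q_i<z$; if some $q_i\ge z$ the corresponding factors are simply absent and the identity degenerates trivially), definition \eqref{jdefs} gives
\begin{equation*}
    V_j(z) = \bigl(1-g(q_{j+1})\bigr)\,V_{j+1}(z).
\end{equation*}
Multiplying through by $g(m_j)$ and using the multiplicativity of $g$ together with $m_{j+1}=m_j q_{j+1}$ yields
\begin{equation*}
    g(m_j)V_j(z) = g(m_j)V_{j+1}(z) - g(m_{j+1})V_{j+1}(z),
\end{equation*}
i.e. $g(m_j)V_{j+1}(z) = g(m_j)V_j(z) + g(m_{j+1})V_{j+1}(z)$.

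Next I would feed this into a straightforward induction on $\ell$ (or equivalently a telescoping sum). For the base case $\ell=1$ the claimed identity reads $V(z)=g(m_0)V_1(z) - g(m_1)V_1(z)$... wait, more carefully: with $m_0=1$ (empty product) and $g(1)=1$, the right-hand side of \eqref{Vinducteq} for $\ell=1$ is $g(m_0)V_1(z) - g(m_1)V_1(z)$, which by the recursion above equals $V_0(z)=V(z)$. For the inductive step, assume \eqref{Vinducteq} holds with $\ell$ replaced by $\ell-1$, i.e.
\begin{equation*}
    V(z)=\sum_{j=0}^{\ell-2}(-1)^j g(m_j)V_{j+1}(z)+(-1)^{\ell-1}g(m_{\ell-1})V_{\ell-1}(z),
\end{equation*}
and substitute the recursion $g(m_{\ell-1})V_{\ell-1}(z) = g(m_{\ell-1})V_\ell(z) + g(m_\ell)V_\ell(z)$ into the last term; regrouping gives exactly \eqref{Vinducteq}.

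There is no real obstacle here — the only point requiring a sentence of care is the treatment of any $q_i\ge z$, where the factor $1-g(q_i)$ is not present in $V_{i-1}(z)$; but in the regime of Theorem \ref{lowerthm} one has $s=\log D/\log z>1$, hence $z<D\le\sqrt{\mcX}$, and since $q_i\mid k_1\le Q_1=(\log y_0)^B$ is much smaller than $z$ once $\mcX$ is large, all $q_i<z$ and the clean recursion applies. (Alternatively one notes the identity \eqref{Vinducteq} is a formal consequence of $\PP_j=\PP_{j+1}\cup\{q_{j+1}\}$ and multiplicativity, independent of the relative size of $q_{j+1}$ and $z$, provided one interprets $V_j(z)=\prod_{p<z,\,p\in\PP_j}(1-g(p))$ literally, because if $q_{j+1}\ge z$ then $V_j(z)=V_{j+1}(z)$ and simultaneously the coefficient identity still telescopes.) I would state this caveat briefly and then present the two-line induction.
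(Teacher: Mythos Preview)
Your approach is essentially the same as the paper's: both prove the identity by induction on $\ell$, using the one-step recursion $V_j(z)=(1-g(q_{j+1}))V_{j+1}(z)$ (the paper writes this equivalently as $V_{j+1}(z)=V(z)/\prod_{i\le j+1}(1-g(q_i))$ after first assuming $z>k_1$).

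Two small slips to fix. First, a sign error: from $V_{\ell-1}(z)=(1-g(q_\ell))V_\ell(z)$ you get $g(m_{\ell-1})V_{\ell-1}(z)=g(m_{\ell-1})V_\ell(z)-g(m_\ell)V_\ell(z)$, not ``$+$''; with the correct sign the substitution into $(-1)^{\ell-1}g(m_{\ell-1})V_{\ell-1}(z)$ indeed yields $(-1)^{\ell-1}g(m_{\ell-1})V_\ell(z)+(-1)^\ell g(m_\ell)V_\ell(z)$, and the induction closes. Second, your parenthetical alternative (that the identity holds regardless of whether $q_i<z$) is not correct: if, say, $\ell=1$ and $q_1\ge z$ then $V_1(z)=V(z)$ but the right-hand side of \eqref{Vinducteq} equals $(1-g(q_1))V(z)\neq V(z)$. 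So you do need the assumption that all $q_i<z$, which (as both you and the paper note) is guaranteed for $\mcX$ large since $k_1\le Q_1$ and $\log z\asymp\log\mcX$; drop the alternative and keep only your first justification.
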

\begin{proof}
    Taking $\mcX$ to be sufficiently large, by \eqref{zseq} we may assume $z>k_1$ throughout. In particular, we have
    \begin{equation}\label{Vj1def}
        V_{j+1}(z)=\prod_{\substack{p<z\\p\in\PP\\p\neq q_1,\ldots,q_{j+1}}}(1-g(p))=\frac{V(z)}{\prod_{i=1}^{j+1}(1-g(q_i))}.
    \end{equation}
    We proceed by induction on $\ell$. For the base case $\ell=1$, by \eqref{Vj1def} the left-hand side of \eqref{Vinducteq} is
    \begin{equation*}
        V_1(z)-g(q_1)V_1(z)=\frac{(1-g(q_1))V(z)}{(1-g(q_1))}=V(z),
    \end{equation*}
    as required. Now, suppose that \eqref{Vinducteq} holds for $\ell=k$. Then, for $\ell=k+1$ we first note that
    \begin{align*}
        (-1)^kg(m_k)V_{k+1}(z)+(-1)^{k+1}g(m_{k+1})V_{k+1}(z)&=(-1)^kg(m_k)(1-g(q_{k+1}))V_{k+1}(z)\\
        &=(-1)^kg(m_k)V_k(z)
    \end{align*}
    by \eqref{Vj1def}. Hence, by the induction hypothesis
    \begin{align*}
        &\sum_{j=0}^{k}(-1)^j g(m_j)V_{j+1}(z)+(-1)^{k+1}g(m_{k+1})V_{k+1}(z)\\
        &\qquad\qquad\qquad\qquad\qquad\qquad\qquad=\sum_{j=0}^{k-1}(-1)^j g(m_j)V_{j+1}(z)+(-1)^{k}g(m_{k})V_{k}(z)\\
        &\qquad\qquad\qquad\qquad\qquad\qquad\qquad=V(z).
    \end{align*}
    This completes the proof.
\end{proof}

\begin{lemma}\label{v1vlem}
    If $k_1\neq 0$, one has
    \begin{equation}\label{v1veq}
        \sum_{j=1}^{\ell-1}g(m_j)V_{j+1}(z)+g(m_{\ell})V_\ell(z)\ll_A V(z).
    \end{equation}
\end{lemma}
\begin{proof}
    We begin by extending our notation for $q_i$ (defined in Section~\ref{setupnotsub}) by setting
    \begin{equation*}
        q_i=0,\qquad i>\ell    
    \end{equation*}
    and use the convention that $g(0)=0$. Hence, with this notation
    \begin{equation*}
        V_{\ell+1}(z)=\frac{V_{\ell}(z)}{1-g(q_{\ell+1})}=V_{\ell}(z).
    \end{equation*}
    By further noting that (see Lemma~\ref{V1lem}) 
    \begin{equation*}
        V_1(z)-V(z)\ll_A V(z),
    \end{equation*}
    we see that it is sufficient to prove
    \begin{equation}\label{v1veq2}
        \sum_{j=1}^{\ell}g(m_j)V_{j+1}(z)\ll_{A}V_1(z)-V(z)
    \end{equation}
    in place of~\eqref{v1veq}.  Next, we note that
    \begin{equation*}
        V_1(z)-V(z)=\frac{V(z)}{1-g(q_1)}-V(z)=\frac{g(q_1)V(z)}{1-g(q_1)}
    \end{equation*}
    and thus,
    \begin{align}
        g(m_j)V_{j+1}(z)&=\prod_{i=1}^{j}{g(q_i)}\frac{V(z)}{\prod_{i=1}^{j+1}(1-g(q_i))}\notag\\
        &=\frac{V_1(z)-V(z)}{1-g(q_{i+1})}\prod_{i=2}^j\frac{g(q_i)}{1-g(q_i)}.\label{gmjvjeq}
    \end{align}
    In order to bound~\eqref{gmjvjeq}, we split into two cases $1\leq j<j'$ and $j\geq j'$, where $j'$ is defined to be the largest integer with
    \begin{equation}\label{jpdef}
        q_{i}\geq A+2\quad\text{for all}\ i\in\{1,\ldots, j'\},
    \end{equation}
    meaning also that there are only $O_A(1)$ values of $j$ with $j'\leq j\leq\ell$.\\
    \\
    \textbf{Case 1:} $1\leq j< j'$.\\
    In this case, we substitute the bound $g(q_i)\leq A/(q_i-1)$ from \eqref{paconds} and the bound~\eqref{jpdef} into~\eqref{gmjvjeq} to yield
    \begin{align}
        g(m_j)V_{j+1}(z)&\leq\frac{V_1(z)-V(z)}{1-A/(q_{j+1}-1)}\frac{A^{j-1}}{\prod_{i=2}^j(q_i-1-A)}\notag\\
        &\ll_A(V_1(z)-V(z))\frac{A^{j-1}}{\prod_{i=2}^j(q_i-1-A)}.\label{gmjvjeq2}
    \end{align}
    Since each $q_i$ in the product in~\eqref{gmjvjeq2} is distinct and satisfies $q_i\geq A+2$ by~\eqref{jpdef}, \eqref{gmjvjeq2} can be bounded further as
    \begin{equation}\label{vj1final}
        g(m_j)V_{j+1}(z)\ll_A (V_1(z)-V(z))\frac{A^{j-1}}{(j-1)!}.
    \end{equation}
    \ \\
    \textbf{Case 2:} $j\geq j'$.\\
    In this case, we first note that since there are only $O_{A}(1)$ values of $j$ with $j'<j\leq\ell$,
    \begin{equation*}
       \prod_{j>j'} \frac{1}{1-g(q_j)}=O_{A}(1).
    \end{equation*}
    Therefore, from~\eqref{gmjvjeq} and the fact that $g(q_i)<1$, we have for $j\geq j'$,
    \begin{align*}
        g(m_j)V_{j+1}(z)&\ll_A(V_1(z)-V(z))\prod_{i=2}^{j'}\frac{g(q_i)}{1-g(q_i)}\\
        &\ll(V_1(z)-V(z))\frac{A^{j'-1}}{(j'-1)!}
    \end{align*}
    with the second bound following from the argument in Case 1.\\
    \\
    Combining Cases 1 and 2,
    \begin{align*}
        \sum_{j=1}^{\ell}g(m_j)V_{j+1}(z)\ll_{A}(V_1(z)-V(z))\left(\sum_{1\leq j<j'}\frac{A^{j-1}}{(j-1)!}+\sum_{j'\leq j\leq\ell}\frac{A^{j'-1}}{(j'-1)!}\right).
    \end{align*}
    Here,
    \begin{equation*}
        \sum_{1\leq j<j'}\frac{A^{j-1}}{(j-1)!}\leq e^A
    \end{equation*}
    and since $(j-1)!$ grows faster than $A^{j-1}$,
    \begin{equation*}
        \sum_{j'\leq j\leq\ell}\frac{A^{j'-1}}{(j'-1)!}\ll_A\sum_{j'\leq j\leq\ell}1=O_A(1).
    \end{equation*}
    Thus,~\eqref{v1veq2} and thereby~\eqref{v1veq} follows.
\end{proof}

Finally, we require some pointwise bounds for $r(m_j)$. Before giving these, we first recall standard bounds on the arithmetic functions $\omega(d)$ and $\varphi(d)$.
\begin{lemma}[{\cite[Th\'eor\`eme 11]{robin1983estimation} and \cite[Theorem 328]{hardy2008introduction}}]\label{omegaphilem}
    One has
    \begin{align}
        \omega(d)\ll\frac{\log d}{\log\log d},\label{omegabound}\\
        \varphi(d)\gg\frac{d}{\log\log d}.\label{phibound}
    \end{align}
\end{lemma}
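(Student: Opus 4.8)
The plan is to reduce both inequalities to the extremal case in which $d$ is (close to) a primorial, i.e.\ a product of the first few primes, and then invoke classical Chebyshev- and Mertens-type estimates. Since both bounds are trivial when $d$ is bounded, we may assume $d$, and hence $\omega(d)$, is as large as we like, with the implied constants absorbing the finitely many small exceptions.

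For \eqref{omegabound}, write $k=\omega(d)$ and let $p_1<p_2<\cdots$ denote the increasing sequence of primes. Since $d$ is divisible by at least $k$ distinct primes, $d\geq p_1p_2\cdots p_k$, so taking logarithms gives
\begin{equation*}
    \log d\geq\sum_{j\leq k}\log p_j=\theta(p_k)\gg p_k\gg k\log k,
\end{equation*}
using the Chebyshev lower bound $\theta(x)\gg x$ together with the elementary estimate $p_k\gg k\log k$ for the $k$-th prime. From $\log d\gg k\log k$ one first deduces $k\ll\log d$, hence $\log k\ll\log\log d$, and feeding this back in yields $k\log\log d\gg k\log k\gg\log d$, that is, $k\ll\log d/\log\log d$, which is \eqref{omegabound}.

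For \eqref{phibound}, again set $k=\omega(d)$ and observe that replacing the prime factors of $d$ by the $k$ smallest primes can only decrease the product $\prod_{p\mid d}(1-1/p)$, so
\begin{equation*}
    \frac{\varphi(d)}{d}=\prod_{p\mid d}\left(1-\frac1p\right)\geq\prod_{j\leq k}\left(1-\frac1{p_j}\right)=\prod_{p\leq p_k}\left(1-\frac1p\right)\gg\frac1{\log p_k},
\end{equation*}
by Mertens' third theorem. Since $p_k\ll k\log k$ and, by \eqref{omegabound}, $k\ll\log d$, we get $\log p_k\ll\log k+\log\log k\ll\log\log d$, and therefore $\varphi(d)\gg d/\log\log d$, as claimed.

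There is essentially no serious obstacle here: the argument is entirely classical, and the only points requiring a little care are the uniformity of the implied constants and the disposal of small $d$, both of which are routine. Alternatively one may simply quote the sharp explicit forms of these bounds from \cite{robin1983estimation} and \cite{hardy2008introduction}.
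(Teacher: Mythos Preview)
The paper does not supply a proof of this lemma at all; it simply cites Robin and Hardy--Wright, so there is nothing to compare your argument against. Your overall strategy (reduce to primorials, then apply Chebyshev/Mertens) is exactly the classical one and is fine for \eqref{phibound}.

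There is, however, a genuine slip in your bootstrapping for \eqref{omegabound}. From $\log d\gg k\log k$ you correctly deduce $k\ll\log d$ and hence $\log k\ll\log\log d$; but the chain you then write, ``$k\log\log d\gg k\log k\gg\log d$'', has the last inequality pointing the wrong way (you only know $k\log k\ll\log d$), and in any case it would yield $k\gg\log d/\log\log d$, the opposite of what you want. The point is that an \emph{upper} bound $\log k\ll\log\log d$ cannot be fed back into $\log d\gg k\log k$ to produce an upper bound on $k$; you need a \emph{lower} bound on $\log k$. The easy fix is a dichotomy: if $k\leq(\log d)^{1/2}$ then $k\ll\log d/\log\log d$ trivially, while if $k>(\log d)^{1/2}$ then $\log k>\tfrac12\log\log d$, so $\log d\gg k\log k\geq\tfrac12 k\log\log d$ and again $k\ll\log d/\log\log d$. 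With this correction your argument is complete.
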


\begin{lemma}\label{rmjlem}
    Suppose $k_1\ge \log\mcX$. Then, for any $j<\ell$ and $\varepsilon>0$, we have
    \begin{equation*}
        r(m_j)\ll_{B,\gamma,\varepsilon}\frac{\mcX}{(\log\mcX)^{B+1-\varepsilon}}.
    \end{equation*}
\end{lemma}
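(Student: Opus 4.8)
The plan is to feed the single modulus $d=m_j$ into the hypothesis \eqref{sievelbmain2} of Theorem \ref{lowerthm} with $k=1$ (so that $|A_1|=X$ and $\pi(y;1,a)=\pi(y)$), giving
\[
r(m_j)\ll\gamma_1^{\omega(m_j)}\sup_{y\le\mcX}\max_{(a,m_j)=1}\left|\pi(y;m_j,a)-\frac{\pi(y)}{\varphi(m_j)}\right|+\sqrt{\mcX}\log\mcX,
\]
and then to show that the supremum on the right is $\ll_B\mcX/(\log\mcX)^B$, the factor $\gamma_1^{\omega(m_j)}$ costing only a further $(\log\mcX)^\varepsilon$. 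The term $\sqrt{\mcX}\log\mcX$ is plainly admissible. The point enabling this is that $m_j=q_1\cdots q_j$ with $j<\ell$ is a \emph{proper} divisor of $k_1$, so $m_j<k_1=k_0\le Q_1\asymp(\log\mcX)^B$; by \eqref{omegabound} of Lemma \ref{omegaphilem} this forces $\omega(m_j)\ll\log\log\mcX/\log\log\log\mcX$, hence $\gamma_1^{\omega(m_j)}\ll_{\gamma,\varepsilon}(\log\mcX)^\varepsilon$, and $1/\varphi(m_j)\le 1$. It therefore remains to prove the pointwise equidistribution estimate $|\pi(y;m_j,a)-\pi(y)/\varphi(m_j)|\ll_B\mcX/(\log\mcX)^B$, uniformly for $y\le\mcX$ and $(a,m_j)=1$.

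For $y\le y_0$ this is immediate: for $y\le\sqrt{\mcX}$ the trivial bound $\pi(y;m_j,a)\le\pi(y)\le y\le\sqrt{\mcX}$ suffices, and for $\sqrt{\mcX}\le y\le y_0$ the Brun--Titchmarsh theorem (Lemma \ref{Bruntitlem}), together with $m_j\le Q_1$ so that $\log(y/m_j)\asymp\log\mcX$, gives $\pi(y;m_j,a),\ \pi(y)/\varphi(m_j)\ll y/(\varphi(m_j)\log\mcX)\le\mcX/(\varphi(m_j)(\log\mcX)^{B+1})$. For $y_0\le y\le\mcX$ I would pass to the Chebyshev $\psi$-function by partial summation exactly as in the proof of Proposition \ref{mainpiprop} (using $\pi_1-\pi\ll\sqrt{y}$ and its analogue in progressions, and the trivial bound $\psi(t;m_j,a)\le\psi(t)\ll t\log t$ on the tail $t\le\sqrt{y}$ of the resulting integral), which reduces everything to bounding $|\psi(t;m_j,a)-\psi(t)/\varphi(m_j)|$ for $y_0\le t\le\mcX$; here I would invoke Lemma \ref{PNTAPlem}, applicable since $m_j\le Q_1$.

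The crux—and the step I expect to require the most care—is controlling the term $t^{\beta_{m_j}}$ produced by Lemma \ref{PNTAPlem} when a Siegel zero modulo $m_j$ exists. The key observation is that such a zero cannot be \emph{exceptional}: by Lemma \ref{siegelzerolem} it is a real zero of $L(s,\chi^*)$ for a primitive quadratic character $\chi^*$ of some conductor $f\mid m_j$, and since $m_j$ is a proper divisor of $k_1=k_0$ we have $f\le m_j<k_0\le Q_1$, so $f$ is not the exceptional modulus up to $Q_1$. Hence Lemma \ref{exceptionalzerolem} gives $\beta_{m_j}\le 1-R_3/\log Q_1\le 1-c_B/\log\log\mcX$ for an effective $c_B>0$, so that $t^{\beta_{m_j}}\le\mcX\exp(-c_B\log\mcX/\log\log\mcX)\ll_B\mcX/(\log\mcX)^B$. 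Combining the three ranges yields $|\psi(t;m_j,a)-\psi(t)/\varphi(m_j)|\ll_B t/(\varphi(m_j)(\log t)^B)$ on $[y_0,\mcX]$; pushing this through the partial summation gives $|\pi(y;m_j,a)-\pi(y)/\varphi(m_j)|\ll_B\mcX/(\log\mcX)^B$, and multiplying by $\gamma_1^{\omega(m_j)}\ll_{\gamma,\varepsilon}(\log\mcX)^\varepsilon$ (and discarding $\sqrt{\mcX}\log\mcX$) completes the proof, in fact with the stronger exponent $B-\varepsilon$. Every constant is effective because Lemmas \ref{siegelzerolem}, \ref{exceptionalzerolem}, \ref{PNTAPlem} and \ref{Bruntitlem} are.
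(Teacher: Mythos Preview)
Your proposal is correct and follows essentially the same approach as the paper's proof: invoke \eqref{sievelbmain2} with $k=1$, bound $\gamma_1^{\omega(m_j)}$ via $m_j<k_1\le Q_1$ and Lemma~\ref{omegaphilem}, and bound the prime-counting error pointwise via Lemma~\ref{PNTAPlem} (plus partial summation) after observing that any Siegel zero modulo $m_j$ cannot be exceptional because $m_j$ is a proper divisor of $k_0$. Your write-up is more detailed than the paper's (which compresses the range-splitting and partial summation into a single sentence) and yields the slightly sharper exponent $B-\varepsilon$ in place of $B-1-\varepsilon$.
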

\begin{proof}
    By~\eqref{sievelbmain2}, we have
    \begin{equation*}
        r(m_j)\ll\gamma_1^{\omega(m_j)}\sup_{y\leq \mcX}\max_{(a,m_j)=1}\left|\pi(y;m_j,a)-\frac{\pi(y)}{\varphi(m_j)}\right|+\sqrt{\mcX}\log\mcX.
    \end{equation*}
    Applying Lemma \ref{PNTAPlem} and partial summation gives
    \begin{equation*}
        \sup_{y\leq \mcX}\max_{(a,m_j)=1}\left|\pi(y;m_j,a)-\frac{\pi(y)}{\varphi(m_j)}\right|\ll_B\frac{\mcX}{(\log\mcX)^{B+1}}.
    \end{equation*}
    Here, since $j<\ell$ we note that $m_j$ is not divisible the exceptional modulus up to $Q_1(\mcX)=(\log\mcX)^B$, so $\mcX^{\beta_{m_j}}$ (appearing in \eqref{PNTAPeq}) is suitably bounded by Lemma~\ref{exceptionalzerolem}. To finish, we use the bound $m_j<k_1\leq(\log\mcX)^B$ as well as \eqref{omegabound} of Lemma~\ref{omegaphilem} to obtain
    \begin{equation*}
        \gamma_1^{\omega(m_j)}< \gamma^{\omega(k_1)}\ll_{B,\gamma,\varepsilon}(\log\mcX)^{\varepsilon}.\qedhere
    \end{equation*}
\end{proof}

\begin{lemma}\label{rmllem}
    If $k_1\geq\log\mcX$ then for any $\varepsilon>0$,
    \begin{equation*}
        r(m_{\ell})\ll_{B,\gamma,\varepsilon}\frac{\mcX}{(\log\mcX)^{2-\varepsilon}}.
    \end{equation*}
\end{lemma}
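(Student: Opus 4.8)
The plan is to mimic the opening of the proof of Lemma~\ref{rmjlem} but then diverge, since here the relevant modulus is itself the exceptional one. Because $k_1$ is square-free and $q_1,\dots,q_\ell$ are \emph{all} of its prime factors, we have $m_\ell = k_1$. Applying \eqref{sievelbmain2} with $k=1$ and $d=m_\ell$ (recall $X=|\mcA|$ in Theorem~\ref{lowerthm} and $\pi(y;1,a)=\pi(y)$) gives
\[
    r(m_\ell)\ll\gamma_1^{\omega(m_\ell)}\sup_{y\le\mcX}\max_{(a,m_\ell)=1}\left|\pi(y;m_\ell,a)-\frac{\pi(y)}{\varphi(m_\ell)}\right|+\sqrt{\mcX}\log\mcX.
\]
The crucial point is that, unlike in Lemma~\ref{rmjlem}, the modulus $m_\ell=k_1$ now equals the exceptional modulus, so neither Lemma~\ref{exceptionalzerolem} nor the weak bound $1-\beta_{k_1}\gg k_1^{-1/2}$ of Lemma~\ref{siegelboundlem} controls the potential Siegel zero usefully when $k_1$ is a large power of $\log\mcX$; hence Lemma~\ref{PNTAPlem} is unavailable, and the error term must instead be estimated directly via the Brun--Titchmarsh theorem.

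For the factor $\gamma_1^{\omega(m_\ell)}$, note $k_1\le Q_1(\mcX)=(\log y_0)^B\asymp(\log\mcX)^B$, so \eqref{omegabound} of Lemma~\ref{omegaphilem} yields $\omega(m_\ell)=\omega(k_1)\ll_B\log\log\mcX/\log\log\log\mcX$, and therefore $\gamma_1^{\omega(m_\ell)}\ll_{B,\gamma,\varepsilon}(\log\mcX)^{\varepsilon}$ for every $\varepsilon>0$. For the supremum I would split the range of $y$. When $2\le y\le\sqrt{\mcX}$, the trivial bounds $\pi(y;m_\ell,a)\le\pi(y)\le y$ contribute $\ll\sqrt{\mcX}$. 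When $\sqrt{\mcX}\le y\le\mcX$, since $m_\ell=k_1\le(\log\mcX)^B$ we have $\log(y/m_\ell)\gg\log\mcX$, and Lemma~\ref{Bruntitlem} together with the prime number theorem gives
\[
    \left|\pi(y;m_\ell,a)-\frac{\pi(y)}{\varphi(m_\ell)}\right|\ll\frac{\mcX}{\varphi(k_1)\log\mcX}.
\]
Here the hypothesis $k_1\ge\log\mcX$ enters: combined with $k_1\le(\log\mcX)^B$, the bound \eqref{phibound} gives $\varphi(k_1)\gg k_1/\log\log k_1\gg\log\mcX/\log\log\log\mcX$, so the last display is $\ll\mcX\log\log\log\mcX/(\log\mcX)^2$.

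Collecting the estimates,
\[
    r(m_\ell)\ll_{B,\gamma,\varepsilon}(\log\mcX)^{\varepsilon}\,\frac{\mcX\log\log\log\mcX}{(\log\mcX)^2}+(\log\mcX)^{\varepsilon}\sqrt{\mcX}+\sqrt{\mcX}\log\mcX\ll_{B,\gamma,\varepsilon}\frac{\mcX}{(\log\mcX)^{2-\varepsilon}},
\]
after absorbing the $\log\log\log\mcX$ factor into $(\log\mcX)^{\varepsilon}$ at the cost of slightly enlarging $\varepsilon$. The one step that is not routine bookkeeping is recognising that the Siegel-zero estimates are worthless in this regime and that Brun--Titchmarsh is precisely strong enough — the $1/\varphi(k_1)$ saving being worth a full power of $\log\mcX$ exactly because $k_1\ge\log\mcX$; this, rather than any delicate computation, is where the argument turns, and it also explains why the exponent here is $2-\varepsilon$ rather than the $B-1-\varepsilon$ of Lemma~\ref{rmjlem}.
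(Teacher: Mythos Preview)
Your proof is correct and follows essentially the same route as the paper: apply \eqref{sievelbmain2} with $k=1$ and $d=m_\ell=k_1$, bound $\gamma_1^{\omega(k_1)}$ via \eqref{omegabound}, split $y$ at $\sqrt{\mcX}$, and for large $y$ use Brun--Titchmarsh together with $\varphi(k_1)\gg k_1/\log\log k_1\gg\log\mcX/\log\log\log\mcX$. Your additional commentary on why the Siegel-zero estimates are useless here is accurate and illuminating, though not present in the paper's more terse version.
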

\begin{proof}
    We have $m_{\ell}=k_1\in[\log\mcX,(\log\mcX)^B]$ and so by \eqref{sievelbmain2},
    \begin{equation*}
        r(m_{\ell})\ll \gamma^{\omega(k_1)}\sup_{y\leq \mcX}\max_{(a,k_1)=1}\left|\pi(y;k_1,a)-\frac{\pi(y)}{\varphi(k_1)}\right|+\sqrt{\mcX}\log\mcX.
    \end{equation*}
    So, consider any $y\leq\mcX$ and $a\in\mathbb{Z}$ with $(a,k_1)=1$. We claim that
    \begin{equation*}
        \left|\pi(y;k_1,a)-\frac{\pi(y)}{\varphi(k_1)}\right|\ll_B \frac{\mcX\log\log\log\mcX}{(\log\mcX)^2}.
    \end{equation*}
    The case $y\leq\sqrt{\mcX}$ follows from the trivial bounds $\pi(y;k_1,a)\leq y$ and ${\pi(y)\leq y}$, so we assume $y\geq\sqrt{\mcX}$. Then, by the Brun--Titchmarsh theorem,
    \begin{equation}\label{brunboundk1}
        \pi(y;k_1,a)\ll_B\frac{y}{\varphi(k_1)\log y}\quad\text{and}\quad\frac{\pi(y)}{\varphi(k_1)}\ll_B\frac{y}{\varphi(k_1)\log y}.
    \end{equation}
    Using \eqref{brunboundk1} along with \eqref{phibound} of Lemma \ref{omegaphilem} gives
    \begin{equation*}
        \left|\pi(y;k_1,a)-\frac{\pi(y)}{\varphi(k_1)}\right|\ll_B\frac{y}{\varphi(k_1)\log y}\ll\frac{y\log\log k_1}{k_1\log y}\ll_B\frac{\mcX\log\log\log\mcX}{(\log\mcX)^2}
    \end{equation*}
    as claimed. Now, we apply \eqref{omegabound} of Lemma \ref{omegaphilem} to obtain
    \begin{equation*}
        \gamma^{\omega(k_1)}\ll_{B,\gamma,\varepsilon}(\log\mcX)^\varepsilon.
    \end{equation*}
    The lemma then follows after a suitable rescaling of $\varepsilon$.
\end{proof}

\subsection{Proof of Theorem \ref{lowerthm}}
We now combine all our results to prove Theorem \ref{lowerthm}.

\begin{proof}[Proof of Theorem \ref{lowerthm}]
    When $k_1\leq\log\mcX$, the proof follows readily as in the proof of Theorem \ref{upperthm}. That is, one simply applies \eqref{pismallgammaeq} of Proposition \ref{mainPNTAPprop}. Thus, we only consider the case $k_1>\log\mcX$. 

    From Lemma \ref{exinclem}, it suffices to bound each term on the right-hand side of \eqref{inclexcleq}. To this aim, we define
    \begin{equation*}
        D_j:=\frac{D}{m_j}\quad\text{and}\quad s_j:=\frac{\log D_j}{\log z}
    \end{equation*}
    for $j=0,\ldots,\ell$. Here we remark that $s_j>1$ for sufficiently large values of $\mcX$. To see this, we recall that $m_j<k_1\leq(\log\mcX)^B$ and $D=\sqrt{\mcX}/(\log\mcX)^B$ so that $\log D_j\sim \log D$ and $s_j\to s>1$ as $\mcX\to\infty$.
    
    Now, we further set
    \begin{equation*}
        r_k(d):=|\mcA_{kd}|-g(d)|\mcA_k|
    \end{equation*}
    and
    \begin{equation*}
        E_j:=
        \begin{cases}
            \sum_{d<D_j,\:(d,\overline{\PP}_{j+1})=1}\gamma^{\omega(d)}\mu^2(d)|r_{m_j}(d)|,&\text{if $j=0,\ldots,\ell-1$},\\
            \sum_{d<D_\ell,\:(d,\overline{\PP}_\ell)=1}\gamma^{\omega(d)}\mu^2(d)|r_{m_{\ell}}(d)|,&\text{if $j=\ell$},
        \end{cases}
    \end{equation*}
    with $\PP_j$ as defined in~\eqref{jdefs}.
    Applying the upper bound sieve \eqref{sieveubgen} gives,
    \begin{align}
        S(\mcA_{m_j},\PP_{j+1},z)&<|\mcA_{m_j}|V_{j+1}(z)(F(s_j)+\varepsilon_1(|\mcA_{m_j}|))+E_j\notag\\
        &=|\mcA_{m_j}|\left[V_{j+1}(z)+V_{j+1}(z)\left(F(s_j)-1+\varepsilon_1(|A_{m_j}|)\right)\right]+E_j\notag\\
        &\leq|\mcA_{m_j}|\left[V_{j+1}(z)+V_{j+1}(z)\left(F(s_\ell)-1+\varepsilon_1(X)\right)\right]+E_j,\label{jub}
    \end{align}
    where in the last step we used that $s_\ell\leq s_j$ and $|A_{m_j}|\leq|A|=X$ so that ${F(s_j)\leq F(s_{\ell})}$ and $\varepsilon_1(|A_{m_j}|)\leq \varepsilon_1(X)$. 
    Then, analogously applying the lower bound sieve \eqref{sievelbgen}:
    \begin{equation}
        S(\mcA_{m_j},\PP_{j+1},z)>|\mcA_{m_j}|\left[V_{j+1}(z)-V_{j+1}(z)\left(1-f(s_\ell)+\varepsilon_2(X)\right)\right]-E_j.\label{jlb}
    \end{equation}
    Similarly,
    \begin{align}
        S(\mcA_{m_\ell},\PP_{\ell},z)&<|\mcA_{m_\ell}|\left[V_{\ell}(z)+V_{\ell}(z)\left(F(s_\ell)-1+\varepsilon_1(X)\right)\right]+E_\ell\label{ellub}\\
        S(\mcA_{m_\ell},\PP_{\ell},z)&>|\mcA_{m_{\ell}}|\left[V_{\ell}(z)-V_{\ell}(z)\left(1-f(s_\ell)+\varepsilon_2(X)\right)\right]-E_{\ell}.\label{elllb}
    \end{align}
    Substituting \eqref{jub}--\eqref{elllb} into \eqref{inclexcleq} of Lemma \ref{exinclem} gives
    \begin{align}
        &S(\mcA,\PP,z)>\sum_{j=0}^{\ell-1}(-1)^j|\mcA_{m_j}|V_{j+1}(z)+(-1)^{\ell}|\mcA_{m_\ell}|V_\ell(z)\notag\\
        &\quad-|\mcA|V_1(z)\left[1-f(s_\ell)+\varepsilon_2(X)\right]\notag\\
        &\quad -\left(\sum_{j=1}^{\ell-1}|\mcA_{m_j}|V_{j+1}(z)+|\mcA_{m_\ell}|V_\ell(z)\right)\left(\max\left\{F(s_\ell)-1,1-f(s_\ell)\right\}+\max\left\{\varepsilon_1(X),\varepsilon_2(X)\right\}\right)\notag\\
        &\quad-\sum_{j=0}^\ell E_j.\label{bigeq}
    \end{align}
    We now simplify each line of \eqref{bigeq}. For the first line, we have by Lemma \ref{Vinductlem},
    \begin{align}
        &\sum_{j=0}^{\ell-1}(-1)^j|\mcA_{m_j}|V_{j+1}(z)+(-1)^{\ell}|\mcA_{m_\ell}|V_\ell(z)\notag\\
        &\qquad=\sum_{j=0}^{\ell-1}(-1)^j(g(m_j)|\mcA|+r(m_j))V_{j+1}(z)+(-1)^{\ell}(g(m_j)|\mcA|+r(m_\ell))V_\ell(z)\notag\\
        &\qquad=|\mcA|V(z)+\sum_{j=0}^{\ell-1}(-1)^{j+1}r(m_j)V_{j+1}(z)+(-1)^{\ell}r(m_{\ell})V_{\ell}(z)\notag\\
        &\qquad\geq XV(z)-V_{\ell}(z)\left(\sum_{j=0}^{\ell-1}|r(m_j)|+r(m_{\ell})\right),\label{midpointvleq}
    \end{align}
    noting that $|\mcA|=X$. Here, by the condition \eqref{omegacon}, we have
    \begin{align}\label{VlVzeq}
        \frac{V_\ell (z)}{V(z)}=\prod_{i=1}^\ell(1-g(q_i))^{-1}\leq\prod_{\substack{p\leq k_1\\p\in\PP}}(1-g(q_i))^{-1}\ll_L(\log k_1)^{\kappa}\ll_{L,B}(\log\log\mcX)^{\kappa}. 
    \end{align}
    Substituting \eqref{VlVzeq} into \eqref{midpointvleq} and then applying Lemmas \ref{omegaphilem}, \ref{rmjlem} and \ref{rmllem} gives
    \begin{align}
        &\sum_{j=0}^{\ell-1}(-1)^j|\mcA_{m_j}|V_{j+1}(z)+(-1)^{\ell} |\mcA_{m_\ell}| V_\ell(z)\notag\\
        &\qquad\geq XV(z)+V(z)\cdot O_{L,B,\gamma,\varepsilon}\left((\log\log\mcX)^{\kappa}\left(\frac{\mcX(\log\log\mcX)}{(\log\mcX)^{B+1-\varepsilon}\log\log\log\mcX}+\frac{\mcX}{(\log\mcX)^{2-\varepsilon}}\right)\right)\notag\\
        &\qquad=XV(z)\left(1+O_{\kappa,L,B,\gamma,\varepsilon}\left(\frac{1}{(\log X)^{1-\varepsilon}}\right)\right),\label{firstbig}
    \end{align}
    where in the last step we have suitably rescaled $\varepsilon$, and used that $B>1$ and $\mcX\asymp X\log X$. This completes our analysis of the first line of \eqref{bigeq}.

    For the second line of \eqref{bigeq}, we first note that by \eqref{zseq}
    \begin{equation*}
        s_{\ell}=\frac{\log D/k_1}{\log z}=s-\frac{\log k_1}{\log z}\geq s-\delta,
    \end{equation*}
    with $\delta>0$ satisfying
    \begin{equation}\label{deltadef}
        \delta=O_{s,B}\left(\frac{\log\log \mcX}{\log \mcX}\right)=o_{s,B}(1).
    \end{equation}
    Thus, since $f(s)$ is non-decreasing,
    \begin{equation}\label{fsleq}
        f(s_{\ell})\geq f(s-\delta).
    \end{equation}
    Using \eqref{fsleq}, Lemma \ref{V1lem} and $|\mcA|=X$ gives
    \begin{align}\label{secondbig}
        &-|\mcA|V_1(z)\left[1-f(s_\ell)+\varepsilon_2(X)\right]\notag\\
        &\qquad\qquad\qquad\geq -XV(z)\left(1+O_A\left(\frac{1}{\log\log\mcX}\right)\right)\left[1-f(s-\delta)+\varepsilon_2(X)\right].
    \end{align}
    We now move onto the third line of \eqref{bigeq}. Here, we have
    \begin{align}
        &\sum_{j=1}^{\ell-1}|\mcA_{m_j}|V_{j+1}(z)+|\mcA_{m_\ell}|V_\ell(z)\notag\\
        &\qquad=X\left(\sum_{j=1}^{\ell-1}g(m_j)V_{j+1}(z)+g(m_{\ell})V_\ell(z)\right)+\sum_{j=1}^{\ell-1}r(m_j)V_{j+1}(z)+r(m_{\ell})V_{\ell}(z).\label{thirdlineeq1}
    \end{align}
    Now, by \ref{v1vlem},
    \begin{equation}\label{thirdlineeq2}
        \sum_{j=1}^{\ell-1}g(m_j)V_{j+1}(z)+g(m_{\ell})V_\ell(z)=V(z)\cdot O_A\left(\frac{1}{\log\log\mcX}\right).
    \end{equation}
    Then, as when bounding the first line of \eqref{bigeq} (see \eqref{firstbig}), we have
    \begin{equation}\label{thirdrmjeq}
        \sum_{j=1}^{\ell-1}r(m_j)V_{j+1}(z)+r(m_{\ell})V_{\ell}(z)\leq XV(z)\cdot O_{\kappa,L,B,\gamma,\varepsilon}\left(\frac{1}{(\log X)^{1-\varepsilon}}\right).
    \end{equation}
    Substituting \eqref{thirdlineeq2} and \eqref{thirdrmjeq} into \eqref{thirdlineeq1} gives that the third line of \eqref{bigeq} is bounded below by
    \begin{equation}\label{penultimatethird}
        -XV(z)\cdot O_{A,\kappa,L,B,\gamma}\left(\frac{1}{\log\log\mcX}\right)\cdot\left(\max\left\{F(s_\ell)-1,1-f(s_\ell)\right\}+\max\left\{\varepsilon_1(X),\varepsilon_2(X)\right\}\right).
    \end{equation}
    Since $\varepsilon_1(X),\varepsilon_2(X)\to 0$, the $\max\{\varepsilon_1(X),\varepsilon_2(X)\}$ term  in \eqref{penultimatethird} can be absorbed into the big-$O$ factor. Then, as $F(\cdot)>1$ is decreasing and $f(\cdot)<1$ is increasing, we have
    \begin{equation*}
        \max\{F(s_{\ell})-1,1-f(s_{\ell})\}\}\leq\max\{F(s-\delta)-1,1-f(s-\delta)\}=O_{s,B}(1),
    \end{equation*}
    recalling that $\delta=o_{s,B}(1)$. In particular, \eqref{penultimatethird} can be further bounded below by
    \begin{equation}\label{finalthird}
        -XV(z)\cdot O_{A,\kappa,L,B,\gamma,s}\left(\frac{1}{\log\log\mcX}\right).
    \end{equation}
    Finally we deal with the fourth line in \eqref{bigeq}. Here, we simply apply Proposition \ref{pimidkprop} and \eqref{omegabound} of Lemma \ref{omegaphilem} to obtain
    \begin{align}\label{fourthlinefinal}
        \sum_{j=0}^\ell E_j\leq\omega(k_1)\cdot O_{\gamma,B}\left(\frac{\mcX}{(\log\mcX)^{B_\gamma}}\right)&\ll_{B,\gamma}\frac{\mcX\log\log\log\mcX}{(\log\mcX)^{B_{\gamma}}\log\log\mcX}\notag\\
        &\ll\frac{X\log\log\log X}{(\log X)^{B_{\gamma}-1}\log\log X}
    \end{align}
    Substituting each of the results \eqref{firstbig}, \eqref{secondbig}, \eqref{finalthird} and \eqref{fourthlinefinal} into \eqref{bigeq} completes the proof of the theorem.
\end{proof}

\section{Applications: proofs of Theorems \ref{twingoldthm} and \ref{p2thm}}\label{appsect}
In this section, we prove Theorems \ref{twingoldthm} and \ref{p2thm}. These theorems are intended as routine examples of how to apply Theorems \ref{upperthm} and \ref{lowerthm}.

\subsection{Proof of Theorem \ref{twingoldthm}}
We begin with the proof of Theorem \ref{twingoldthm}, which gives an upper bound for the count of twin primes and Goldbach representations.
\begin{proof}[Proof of Theorem \ref{twingoldthm}]
    For $x\geq 3$ and even $n\geq 4$, we set
    \begin{align*}
        \mcA_1&=\{p+2:2<p\leq x\ \text{is}\ \text{prime}\}\quad\text{and}\quad\PP_1=\{p>2\ \text{prime}\},\\
        \mcA_2&=\{n-p:(p,n)=1\}\quad\text{and}\quad\PP_2=\{p\ \text{prime}:(p,n)=1\}.
    \end{align*}
    With these choices of sifting sets, we have that
    \begin{align}
        \Pi_2(x)&\leq\pi(z)+S(\mcA_1,\PP_1,z)\label{pi2id},\\
        G(n)&\leq\pi(z)+S(\mcA_2,\PP_2,z)+\omega(n),\label{gnid}
    \end{align}
    where $\Pi_2(x)$ and $G(n)$ are as defined in \eqref{pi2def} and \eqref{Gndef} respectively. Note here that in \eqref{gnid} the $\omega(n)=O(\log n)$ term comes from considering all primes $p$ with $p\mid n$, which are not included in the definitions of $\mcA_2$ and $\PP_2$. 
    
    Now, in what follows, we will only focus on bounding $\Pi_2(x)$, since the bound for $G(n)$ follows via an essentially identical argument using \eqref{gnid} in place of \eqref{pi2id}. To set up our sifting argument, we let $X=|\mcA_1|=\pi(x)-1$ and
    \begin{equation*}
        g(p)=\frac{1}{\varphi(p)}=\frac{1}{p-1}
    \end{equation*}
    for all $p\in\PP_1$ which extends to the multiplicative function $g(d)=1/\varphi(d)$ for all square-free $d$ with $(d,\overline{\PP_1})=1$. With this choice of $g(d)$, one has
    \begin{equation*}
        r(d)=|(\mcA_1)_d|-g(d)|\mcA_1|=\left|\pi(x;d,2)-\frac{\pi(x)}{\varphi(d)}\right|+O(1).
    \end{equation*}
    Moreover, via an application of Mertens' theorem,
    \begin{equation*}
        \prod_{\substack{z_1\leq p<z_2\\p\in\PP_1}}(1-g(p))^{-1}\leq\frac{\log z_2}{\log z_1}\left(1+\frac{L}{\log z_1}\right)
    \end{equation*}
    for some effectively computable constant $L>0$,
    and,
    \begin{align}
        V(z):=\prod_{\substack{2<p<z}}(1-g(p))&=\prod_{2<p<z}\left(1-\frac{1}{(p-1)^2}\right)\prod_{\substack{2<p<z}}\left(1-\frac{1}{p}\right)\notag\\
        &=\frac{2e^{-\gamma}}{\log z}\prod_{p>2} \left(1-\frac{1}{(p-1)^2}\right)\left(1+O\left(\frac{1}{\log z}\right)\right)\label{Vmertenexp},
    \end{align}
    with the big-$O$ term effective. Note that in \eqref{Vmertenexp} we have used that
    \begin{equation*}
        \prod_{p\geq z}\left(1-\frac{1}{(p-1)^2}\right)^{-1}=\prod_{p\geq z}\left(1+\frac{1}{p(p-2)}\right)\leq\prod_{n\geq z}\left(1+\frac{3}{n^2}\right)=1+O\left(\frac{1}{z}\right).\\
    \end{equation*}
    Thus, in the context of Theorem \ref{upperthm}, we have a sifting problem with $A=1$ and $\kappa=1$. We now use the Rosser--Iwaniec linear sieve upper bound \cite[Theorem 1]{iwaniec1980rosser}
    \begin{equation}\label{rosserupper}
        S(\mcA_1,\PP_1,z)< XV(z)\left(F(s)+O_L\left(\frac{1}{(\log X)^{1/3}}\right)\right)+\sum_{\substack{d\leq D\\(d,\overline{\PP_1})=1}}\mu^2(d)|r(d)|,
    \end{equation}
    where, for $1\leq s\leq 3$
    \begin{equation*}
        F(s)=\frac{2e^{\gamma}}{s},
    \end{equation*}
    and for $s>3$, $F(s)$ is defined by a differential-delay equation and satisfies the conditions of Theorem \ref{upperthm}, see \cite[\S 8.2]{halberstam1974sieve}. Here, we note that the $O_L$ term in \eqref{rosserupper} is effective and a fully explicit variant is given in \cite[Theorem 2.1]{BJV2025}. Thus, letting 
    \begin{equation*}
        D=\frac{\sqrt{x}}{(\log x)^{5}},
    \end{equation*}
    we can apply Theorem \ref{upperthm} with $A=1$, $\gamma=1$, $B=5$, $s=1$ and $V(z)$ given in \eqref{Vmertenexp} to obtain the effective upper bound
    \begin{align*}
        \Pi_2(x)&=\pi(D)+S(\mcA_1,\PP_1,D)\\
        &<\pi(\sqrt{x})+4C_2\frac{\pi(x)}{\log x}\left(1+O\left(\frac{1}{\log\log X}\right)\right)+O\left(\frac{\pi(x)}{(\log x)^2}\right).
    \end{align*}
    The desired bound \eqref{pi2gbound} then follows upon applying an effective version of the prime number theorem $\pi(x)\sim x/\log x$ (e.g.\ \cite[Theorem 1]{rosser1962approximate}).
\end{proof}
\begin{remark}
    It is also common to use Selberg's upper bound sieve \cite[Theorem 3.2]{halberstam1974sieve} to obtain an upper bound for $\Pi_2(x)$ or $G(n)$. Selberg's sieve would give a simpler $O_L$ term in \eqref{rosserupper} yet a slightly more complicated remainder term. Ultimately, using either sieve (Rosser-Iwaniec or Selberg) yields the same upper bound asymptotically, but the size of $X(\varepsilon)$ and $N(\varepsilon)$ in Theorem \ref{twingoldthm} would differ. 
\end{remark}

\subsection{Proof of Theorem \ref{p2thm}}
We now prove Theorem \ref{p2thm}, which gives an application of the effective lower bound sieve, Theorem \ref{lowerthm}. This application also gives an example of the case when $\gamma>1$ and $\kappa>1$, unlike our proof of Theorem \ref{twingoldthm}.

\begin{proof}[Proof of Theorem \ref{p2thm}]
    Let $n\geq 5$ such that $n\equiv 0,2$ (mod 6). We set
    \begin{align*}
        \mcA&=\{n-q^2:3<q\leq n\ \text{is}\ \text{prime}\ \text{and}\ (q,n)=1\},\\
        \quad\PP&=\{p>3\ \text{prime}\ \text{and}\ (p,n)=1\}.
    \end{align*}
    We first confirm that any $a\in\mcA$ has no divisors in $\overline{\PP}$. To see this, consider some $a=n-q^2\in\mcA$. Then $(a,n)=1$ since $(q,n)=1$. In addition, $2\nmid a$ since ${n\equiv 0\pmod{2}}$ and $q^2\equiv 1\pmod{2}$. Similarly $3\nmid a$. All this is to say is that $S(\mcA,\PP,z)$ gives a lower bound for the number of elements of $\mcA$ with no prime factor less than $z$. In particular, for the purposes of the proof, we wish to show
    \begin{equation*}
        S(\mcA,\PP,z)>0
    \end{equation*}
    for some $z>n^{1/18}$ and sufficiently large $n>N$, with $N$ effectively computable.

    With a view to defining a suitable multiplicative function $g(d)$ for sifting, we now analyse the quantity $|\mcA_p|$ for $p\in\PP$. Here we are looking for the number of primes $q\in\PP$ with $q\leq\sqrt{n}$ such that
    \begin{equation}\label{adcong}
        n\equiv q^2\pmod{p}.
    \end{equation}
    For $n$ to give a solution to \eqref{adcong} we require that $n$ is a quadratic residue modulo $p$. That is, there exists ${a_0\pmod{p}}$ with $a_0^2\equiv n$ (mod $p$). If this is the case, then there are two congruence classes of solutions to \eqref{adcong}, namely $a_0$ and $-a_0$. In particular,
    \begin{align}
        |\mcA_p|&=\#\{n\equiv q^2\ \text{(mod $p$)}:q\in\PP\}\notag\\
        &=\#\{n\equiv q^2\ \text{(mod $p$)}:q\leq\sqrt{n}\ \text{prime}\}+O(\log n)\label{Apexp}\\
        &=\#\{q\leq \sqrt{n}:q\equiv a_0\ \text{(mod $p$)}\ \text{or}\ q\equiv -a_0\ \text{(mod $p$)}\}+O(\log n)\notag\\
        &=\pi(\sqrt{n};p,a_0)+\pi(\sqrt{n};p,-a_0)+O(\log n)\label{PNTAPsplit}\\
        &\sim\frac{2}{\varphi(p)}\pi(\sqrt{n})\label{pntapquad}
    \end{align}
    where in \eqref{Apexp} we used that there are $O(\log n)$ unique prime divisors of $n$. By the Chinese remainder theorem, the asymptotic \eqref{pntapquad} extends to
    \begin{equation*}
        |\mcA_d|\sim\frac{2^{\omega(d)}}{\varphi(d)}\pi(\sqrt{n})
    \end{equation*}
    for any square-free $d$ with $(d,\overline{\mathcal{P}})=1$. Thus, for our sifting problem we set
    \begin{equation*}
        g(d)=
        \begin{cases}
            \frac{2^{\omega(d)}}{\varphi(d)},&\text{if $n$ is a quadratic residue mod $d$},\\
            0,&\text{otherwise}
        \end{cases}
    \end{equation*}
    so that $|\mcA_d|\sim g(d)|\mcA|$. More generally, the Chinese remainder theorem gives that the condition \eqref{sievelbmain2} holds with $\gamma_1=2$. We then have for ${z_2>z_1>3}$,
    \begin{align}\label{dim2eq}
        \prod_{\substack{z_1\leq p<z_2\\p\in\PP}}(1-g(p))^{-1}<\prod_{\substack{z_1\leq p<z_2}}\left(1-\frac{2}{p-1}\right)^{-1}&=\exp\left(-\sum_{\substack{z_1\leq p<z_2}}\log\left(1-\frac{2}{p-1}\right)\right)\notag\\
        &=\exp\left(\sum_{z_1\leq p<z_2}\left(\frac{2}{p}+O\left(\frac{1}{p^2}\right)\right)\right)\notag\\
        &=\exp\left(\sum_{z_1\leq p<z_2}\frac{2}{p}+O(1)\right).
    \end{align}
    Applying Mertens' theorem to \eqref{dim2eq} then gives
    \begin{equation*}
        \prod_{\substack{z_1\leq p<z_2\\p\in\PP}}(1-g(p))^{-1}<\left(\frac{\log z_2}{\log z_1}\right)^2\left(1+\frac{L}{\log z_1}\right),
    \end{equation*}
    for some computable constant $L>0$. This means that we can apply a sieve of dimension $\kappa=2$. Via analogous reasoning, we also see that
    \begin{equation}\label{vzbound18}
        V(z)=\prod_{\substack{p<z\\p\in\PP}}(1-g(p))\gg\frac{1}{(\log z)^2}.
    \end{equation}
    Now, the 2-dimensional sieve lower bound we choose to apply is due to Diamond, Halberstam and Richert, given by \cite[Theorem 9.1]{diamond2008higher}
    \begin{equation}\label{DHRlb}
        S(\mcA,\PP,z)>XV(z)\left\{f_{2}(s)++O_{L}\left(\frac{(\log\log D)^2}{(\log D)^{1/6}}\right)\right\}-R_{DHR}(D),
    \end{equation}
    where
    \begin{equation}
        R_{DHR}(D)=2\sum_{\substack{d<D\\(d,\overline{P})=1}}4^{\omega(d)}|r(d)|\leq\sum_{\substack{d<D\\(d,\overline{P})=1}}8^{\omega(d)}|r(d)|\label{rdgen},
    \end{equation}
    $f_{2}(s)$ is defined in \cite[Chapter 6.1]{diamond2008higher}, and the $O_L$ term in \eqref{DHRlb} is effective. Notably, $f_2(s)>0$ for $s\geq 4.267$ \cite[Table 17.1]{diamond2008higher}. 
    
    Therefore, with $X=|\mcA|\sim\pi(\sqrt{n})$ and $V(z)$ bounded as in \eqref{vzbound18}, we can apply Theorem \ref{lowerthm} with $A=2$, $\mcX=\sqrt{n}$, $B=266$, $\gamma=16$ and  $s=4.267$ to yield an effective bound
    \begin{equation*}
        S(\mcA,\PP,z)\gg\frac{\sqrt{n}}{(\log n)^3}>0
    \end{equation*}
    with 
    \begin{equation*}
        z=D^{1/s}=\frac{n^{1/17.068}}{(\log\sqrt{n})^{74/4.267}}.
    \end{equation*}
    In particular, $z>n^{1/18}$ for sufficiently large $n$, so that the theorem follows.
\end{proof}
\begin{remark}
    If one were to make the above proof explicit, then the value of $N$ for which $S(\mcA,\PP,z)>0$ for all $n>N$ would be very large. This is because of the value of $B=266$ which would require one to use a very good bound for the error term in the prime number theorem for arithmetic progressions (see Lemma \ref{PNTAPlem}). To be able to use a lower value of $B$, one should use a sieve that requires as small a value of $\gamma$ as possible. For example, if the 2-dimensional Rosser--Iwaniec sieve \cite[Theorem~1]{iwaniec1980rosser} was used instead of \eqref{DHRlb}, then one could take $\gamma=2$ and $B=14$. However, this would come at the cost of slightly increasing the number of prime factors of $n-q^2$.
\end{remark}

\section*{Acknowledgements}
The author thanks the anonymous referees for their useful comments. The author also thanks Bryce Kerr for his comments on a preliminary version of this paper.

\printbibliography

\end{document}